\newtheorem{theorem}{Theorem}
\newtheorem{lemma}[theorem]{Lemma}
\newtheorem{proposition}[theorem]{Proposition}
\newtheorem{corollary}[theorem]{Corollary}
\theoremstyle{definition}
\newtheorem{remark}{\it Remark}
\newtheorem{example}[theorem]{Example}
\newcounter{paraga}[section]
\newtheorem{Main}{Theorem}
\def\MP{\,{<\hspace{-.5em}\cdot}\,}
\def\PM{\,{\cdot\hspace{-.3em}<}\,}
\def\EP{\,{=\hspace{-.2em}\cdot}\,}
\def\N{\mathbb N}
\def\C{\mathbb C}
\def\Q{\mathbb Q}
\def\R{\mathbb R}
\def\T{\mathbb T}
\def\Z{\mathbb Z}
\begin{document}

\begin{titlepage}
  \title{\LARGE{\textbf{KAM, $\alpha$-Gevrey regularity\\
        and the $\alpha$-Bruno-R{\"u}ssmann condition}}}%
  \author{Abed Bounemoura and Jacques F{\'e}joz}
\end{titlepage}

\maketitle

\begin{abstract}
  We prove a new invariant torus theorem, for $\alpha$-Gevrey smooth
  Hamiltonian systems, under an arithmetic assumption which we call
  the $\alpha$-Bruno-R{\"u}ssmann condition, and which reduces to the
  classical Bruno-R{\"u}ssmann condition in the analytic category. Our
  proof is direct in the sense that, for analytic Hamiltonians, we
  avoid the use of complex extensions and, for non-analytic
  Hamiltonians, we do not use analytic approximation nor smoothing
  operators. Following Bessi, we also show that if a slightly weaker
  arithmetic condition is not satisfied, the invariant torus may be
  destroyed. Crucial to this work are new functional estimates in the
  Gevrey class.
\end{abstract}


\section{Introduction}\label{s1}

\subsection{The general question}

We consider small perturbations of an integrable Hamiltonian system,
defined by
\begin{equation*}
\dot{q}=\nabla_p H(q,p), \quad \dot{p}=- \nabla_q H(q,p)
\end{equation*}
where $H$ is a Hamiltonian of the form
\[ H(q,p)=h(p) + \epsilon f(q,p), \quad (q,p) \in \T^n \times \R^n,
\quad 0 \leq \epsilon <1\]
where $n\geq 2$, $\T^n=\R^n / \Z^n$, $\omega_0 = \nabla h(0) \in\R^n$,
and $\nabla^2 h(0) \in M_n(\R)$ is non-degenerate. When $\epsilon=0$,
the torus $\mathcal{T}_0$ of equation $p=0$ is invariant and
quasi-periodic of frequency $\omega_0$. The general question we are
interested in is the persistence of this torus for $\epsilon \neq 0$
sufficiently small : does there exist a torus $\mathcal{T}_\epsilon$
which is invariant and quasi-periodic of frequency $\omega_0$ and
which converges (in a suitable sense) to $\mathcal{T}_0$ as $\epsilon$
goes to zero?

This question was answered positively by Kolmogorov in his
foundational paper~\cite{Kol54} under the assumption that $H$ is
real-analytic and $\omega_0$ is a $\tau$-Diophantine vector
($\tau \geq n-1$): there exists $\gamma>0$ such that for all
$k \in \Z^n\setminus \{0\}$, $|k\cdot\omega_0|\geq \gamma|k|^{-\tau}$.
As a conclusion, the perturbed torus is real-analytic. It became clear
that a regularity assumption on $H$ and an arithmetic condition on
$\omega_0$ were necessary, and then further works investigate the
interplay between the analysis and the arithmetic.

It was certainly a remarkable contribution of Moser (see \cite{Mos62})
to realize that the question can also be answered for Hamiltonians
which are only finitely differentiable. More precisely (see
\cite{Sal04}), if $\omega_0$ is $\tau$-Diophantine and if $H$ is of
class $C^r$, with $r>2(\tau+1)$, then the torus persists and it is of
class $C^{r'+\tau+1}$ for any $r'<r-2(\tau+1)$. If $H$ is smooth, that
is $C^{\infty}$, there is no restriction on $\tau$ and the perturbed
torus is smooth. It follows from a recent result of
Cheng-Wang~\cite{CW13} (which uses an idea of Bessi~\cite{Bes00}) that
the result is false if $H$ is of class $C^r$, with $r<2(\tau+1)$. Thus
in the finitely differentiable or smooth case, one may consider this
Diophantine condition as essentially optimal.

In the real-analytic setting, the Diophantine condition is not
necessary. Indeed, it is sufficient to assume that $\omega_0$
satisfies the weaker Bruno-R{\"u}ssmann condition (see \S\ref{sec:aBR} for
a definition), as was first proved by R{\"u}ssmann in~\cite{Rus01}; an
equivalent condition was actually introduced earlier by
Bruno~\cite{Bru71},~\cite{Bru72} in a different but related small
divisors problem, the Siegel linearization problem. The necessity of
this condition turns out to be a more subtle problem. In the Siegel
problem, it is optimal in dimension one (this is a celebrated result
of Yoccoz~\cite{Yoc88},~\cite{Yoc95}) but in higher dimension it is
unknown. In the Hamiltonian problem we are considering here, the only
general result we are aware of is due to Bessi~\cite{Bes00} (extending
an earlier result of Forni~\cite{For94} for twist maps of the annulus)
in which a torus with a frequency not satisfying a slightly weaker
condition can be destroyed by an arbitrary small analytic
perturbation. This leaves open the possibility of slightly improving
the Bruno-R{\"u}ssmann condition.

\subsection{Main results of the paper} 

Real-analytic functions are characterized by a growth of their derivatives of order $s^{-|k|}k!$ for some analyticity width $s>0$; in the periodic case, this is equivalent to a decay of Fourier coefficients of order $e^{-s|k|}$. Given a real parameter $\alpha \geq 1$, allowing a growth of the derivatives of order $s^{-|k|}k!^\alpha$ or, equivalently, a decay of Fourier coefficients of order $e^{-\alpha s|k|^{1/\alpha}}$, one is lead to consider $\alpha$-Gevrey functions, which thus corresponds to real-analytic functions when $\alpha=1$. Since the introduction by Gevrey of the class of functions now baring his name (\cite{Gev18}), there has been a huge amount of works on Gevrey functions, mainly for PDEs, but also more recently in other fields, including dynamical systems (see \S\ref{sec:related} for some related works in dynamical systems dealing with Gevrey regularity).     
 
In this paper, we study the persistence when $\epsilon$ is small of the torus $\mathcal{T}_0$, as a Gevrey
quasiperiodic invariant torus $\mathcal{T}_\varepsilon$, under the assumptions that $H$ itself has Gevrey regularity. The only general result so far is due to Popov~\cite{Pop04} who proved that the latter holds true if $\omega$ satisfies a Diophantine condition. This result, the proof of which uses analytic approximation, extends the result of Kolmogorov when $\alpha=1$ but not the one of R{\"u}ssmann: clearly one would expect an arithmetic condition which does depend on $\alpha$ and that reduces to the Bruno-R{\"u}ssmann condition when $\alpha=1$.  

The main result of the paper is to solve this persistency problem, assuming that the frequency $\omega_0$
satisfies some arithmetic condition which we call the
$\alpha$-Bruno-R{\"u}ssmann condition, which is weaker than the Diophantine condition and agrees with the Bruno-R{\"u}ssmann condition when $\alpha=1$. This is the content of
Theorem~\ref{classicalKAM}; Theorem~\ref{isoKAM} and Theorem~\ref{timeKAM} deal respectively with the iso-energetic and time-periodic versions. We will also state and prove a Gevrey analogue of Arnold's normal form theorem for vector fields on the torus (Theorem~\ref{KAMvector}). Theorem~\ref{KAMparameter} is a more
precise, quantitative statement, with parameters, which does not
require non-degeneracy, and from which Theorem~\ref{classicalKAM} and Theorem~\ref{KAMvector} follow. We also notice that Bessi's ideas~\cite{Bes00} may be adapted to the Gevrey setting, to provide a necessary arithmetic condition for the invariant torus to persist (Theorem~\ref{destruction}). The so-obtained condition fails to agree with the sufficient condition of Theorem~\ref{classicalKAM} and, as in the analytic case, it remains open to determine the optimal condition. Finally, we will also give discrete versions of Theorem~\ref{classicalKAM} and Theorem~\ref{KAMvector}, which are, respectively, Theorem~\ref{KAMmap1} and Theorem~\ref{KAMmap2}.  

When a Hamiltonian is not real analytic, it is often the case that there is still some control on its derivatives and that it has Gevrey regularity. This may happen for example for the restriction of an analytic Hamiltonian restricted to a Gevrey, symplectic, central manifold. Technically, Gevrey regularity luckily extends the well-behaved analytic regularity in KAM theory: the effect of small denominators in Fourier series reduces to decreasing the ``Gevrey width'' $s$, the analogue of the analyticity width. This makes it possible to adapt Kolmogorov's proof of his invariant torus theorem without using analytic approximations or smoothing operators as in the smooth setting. Yet there are two issues one needs to solve.

The first and main issue is that the estimates needed in the general
problem of perturbation theory were missing. This is why we provide an
appendix with an adequate choice of norms and spaces, together with
the estimates needed in our proof. In particular,
Proposition~\ref{composition} provides a ``geometric'' estimate of the
composition of two Gevrey functions, in which the loss of Gevrey width
is arbitrarily small when composing a function to the right by a
diffeomorphism close to the identity, in continuity with the
real-analytic setting. Starting with the work of Gevrey
itself~\cite{Gev18}, there have been many results concerning the
composition of Gevrey functions (see, for instance,
Yamanaka~\cite{Yam89}, Marco-Sauzin~\cite{MS02},
Cadeddu-Gramchev~\cite{Cadeddu:2003}, Popov~\cite{Pop04}) but none of
them allowed an arbitrarily small loss of width except in some
particular cases (the one-dimensional case and the analytic case). To
our knowledge, our composition result is new and may be of independent
interest.

The second and minor issue is that to reach a weak arithmetic condition, it is usually better not to solve exactly the cohomological equation but an approximate version of it, and hence one cannot proceed as in Kolmogorov's proof. The strategy of R{\"u}ssmann, that we could have tried to pursue here, consists in solving this equation not for the original perturbation but for a polynomial approximation of it. We will rather adopt the strategy of~\cite{BF13},~\cite{BF14} in which periodic approximations of the frequency are used and only cohomological equations associated to periodic vectors need to be solved: estimates on the solution are straightforward in this case, unlike the cohomological equation associated to a non-resonant vector.    

As a further remark concerning the proof, in invariant tori problems derivatives in the angle and action directions do not play the same role: in the analytic case it is customary to introduce anisotropic norms. However, as Theorem~\ref{KAMparameter} and its proof show, we can still get good estimates if we keep track separately of the sizes of various
terms in the expansion of the Hamiltonian with respect to the
actions: this turns out simpler than using anisotropic Gevrey norms. Such a feature is not present in dealing with linearization problem such as in Theorem~\ref{KAMvector}; a direct proof of the latter result would have been much simpler.

\subsection{Related results}\label{sec:related}

Apart from the work of Popov that we have already mentioned, there have been several works dealing with Gevrey regularity in a related context.

The first setting is the so-called Siegel-Sternberg linearization problem. Under a non-resonance condition, a formal solution to the conjugacy problem always exists and Sternberg proved that the solution is in fact smooth. In the analytic case, under the Bruno-R{\"u}ssmann condition the conjugacy is analytic; this arithmetic condition is thus sufficient but also necessary in (complex) dimension one (a result of Yoccoz we already mentioned). In the Gevrey setting, still under the Bruno-R{\"u}ssmann condition, Carletti-Marmi~\cite{CM00} and Carletti~\cite{Car03} have shown that the formal solution still has Gevrey growth (with the same Gevrey exponent); an interesting feature of their result is that allowing a worse Gevrey exponent for the formal solution, one can relax accordingly the arithmetic condition. All these results are actually valid for a class of ultra-differentiable functions that includes analytic and Gevrey functions. It was then proved by Stolovitch~\cite{Sto13} that this formal Gevrey solution actually give rise to a Gevrey smooth solution, and recently, P{\"o}schel~\cite{Pos17} gave a very general version of the Siegel-Sternberg theorem for ultra-differentiable functions that contains all the previous results (the smooth, analytic, Gevrey and ultra-differentiable cases). Let us mention that all these results do use stability by composition, but a precise composition result is not needed as they do not require to keep track of the width.  

In the analytic setting, the Siegel problem and the problem of the linearization of circle diffeomorphisms are solved under the same arithmetic condition~\cite{PM97}. But this may well be incidental, and, to our knowledge, it may well not be true in the Gevrey setting. The only result concerning Gevrey circle diffeomorphism we are aware of is due to Gramchev-Yoshino~\cite{Gramchev:1999}: they proved the linearization theorem under a condition which is weaker than the Diophantine condition but stronger than the $\alpha$-Bruno-R{\"u}ssmann condition (they actually introduce a condition equivalent to our $\alpha$-Bruno-R{\"u}ssmann condition and conjecture that the result should hold under this condition). To prove such a result, they use a composition result but in one dimension only; in this special case, as we already pointed out above, good composition estimates are known (see, for instance,~\cite{MS02}). As a matter of fact, Theorem~\ref{KAMmap2} (the discrete version of Theorem~\ref{KAMvector}) gives linearization of Gevrey torus diffeomorphism close to a translation under the $\alpha$-Bruno-R{\"u}ssmann condition, extending the result in~\cite{Gramchev:1999} (and giving a positive answer to their conjecture).

\subsection{Further results}

Let us describe some further results that could be achieved using the techniques of this paper. The literature on KAM theory is enormous and so there are many potential applications; we will only describe here some of those that may have some interest.

First, and more importantly, the technical estimates we derive in Appendix~\ref{app2} for Gevrey functions actually hold true for a larger class of ultra-differentiable functions that includes Gevrey (and thus analytic) functions as a particular case. This not only leads to a further extension of the KAM theorems we state and prove here, but also allows us to generalize other perturbative results such as the Nekhoroshev theorem (extending the result of~\cite{MS02} in the convex case and~\cite{Bou11} in the steep case). To keep this paper to a reasonable length, all these results will be derived in a subsequent article~\cite{BFb17}.

Then, our main result Theorem~\ref{classicalKAM} deals with the persistence of Lagrangian tori; KAM theory also deals with lower-dimensional tori (see, for instance,~\cite{Rus01} for a comprehensive treatment in the analytic case), and one may expect that our result extend to such a setting.

Finally, one may consider the problem of reducibility of quasi-periodic cocycles close to constant. In the analytic case, the Bruno-R{\"u}ssmann condition is sufficient, as was shown in~\cite{CM12}; in the $\alpha$-Gevrey case, the $\alpha$-Bruno-R{\"u}ssmann condition is sufficient. In fact, this setting is simpler from a technical point of view and our Gevrey estimates are not necessary to obtain such a result; one simply needs to go through the proof of~\cite{CM12}. A possible explanation for this is that for quasi-periodic cocycles, composition occur in a linear Lie group, thus only estimates for linear composition (product of matrices) are necessary and so everything boils down to good estimates for the product of two functions.

\subsection{Plan of the paper}

The plan of the paper is as follows.

In Section~\ref{sec:setting}, we describe precisely the setting,
namely we properly define the Gevrey norms we will use and the
$\alpha$-Bruno-R{\"u}ssmann condition. In Section~\ref{sec:main-results}
we state our main results:
\begin{itemize}
\item Theorem~\ref{classicalKAM} about the persistence of a torus in a
  non-degenerate Hamiltonian system under the $\alpha$-Bruno-R{\"u}ssmann
  condition;
\item Theorem~\ref{isoKAM}, the iso-energetic version of
  Theorem~\ref{classicalKAM};
\item Theorem~\ref{timeKAM}, the non-autonomous time-periodic version;
\item Theorem~\ref{destruction} about the destruction of a torus in
  the same context not assuming a condition weaker that the
  $\alpha$-Bruno-R{\"u}ssmann condition;
\item Theorem~\ref{KAMvector} about linearization of vector fields on
  the torus close to constant (we will also discuss necessary
  arithmetic conditions here, albeit in a restricted context);
\item Theorem~\ref{KAMmap1}, the discrete version of
  Theorem~\ref{classicalKAM}, about the persistence of a torus in a
  non-degenerate exact-symplectic map;
\item Theorem~\ref{KAMmap2}, the discrete version of
  Theorem~\ref{KAMvector}, about the linearization of diffeomorphisms
  of the torus close to a translation.
\end{itemize}
In Section~\ref{sec:KAMparam} we state Theorem~\ref{KAMparameter}, the
main technical result of this paper, which is a KAM theorem which do
not require non-degeneracy but depends on parameters. In
Section~\ref{sec:proofs}, we give the proof of
Theorems~\ref{classicalKAM} and~\ref{KAMvector}, assuming
Theorem~\ref{KAMparameter}. Section~\ref{sec:proof} contains the proof
of Theorem~\ref{KAMparameter}. Section~\ref{sec:bessi} contains the
proof of Theorem~\ref{destruction}, a straightforward extension of the
work of Bessi~\cite{Bes00}.

Finally, two appendices contain technical results. Appendix~\ref{app1}
provides various characterizations of the $\alpha$-Bruno-R{\"u}ssmann
condition. Appendix~\ref{app2}, which is absolutely crucial in this
work, provides estimates on Gevrey functions (and in particular our
composition result Proposition~\ref{composition}) which are use
throughout the paper.

\section{Setting}\label{sec:setting}

\subsection{Gevrey Hamiltonians}\label{sec:Gevrey}

Recall that $n\geq 1$ is an integer, $\T^n=\R^n / \Z^n$ and let
$B \subseteq \R^n$ be a bounded open domain containing the origin. For
a small parameter $\epsilon\geq 0$, we consider a Hamiltonian function
$H : \T^n \times B \rightarrow \R$ of the form
\begin{equation}\label{Ham1}
\begin{cases}\tag{$*$}
  H(q,p)= h(p) +\epsilon f(q,p), \\
  \nabla h(0):=\omega_0 \in \R^n.
\end{cases}  
\end{equation}
The Hamiltonian $h$ is \textit{non-degenerate} at the origin if the
matrix $\nabla^2 h(0)$ itself is non-degenerate. We shall assume that
the Hamiltonian $H$ is \textit{$\alpha$-Gevrey} on
$\T^n \times \bar{B}$, with $\alpha \geq 1$ and where $\bar{B}$
denotes the closure of $B$ in $\R^n$: $H$ is smooth on a open
neighborhood of $\T^n \times \bar{B}$ in $\T^n \times \R^n$ and there
exists $s_0>0$ such that, using multi-indices notation (see
Appendix~\ref{app2}),
\begin{equation}\label{defn}
  |H|_{\alpha,s_0} := c\sup_{(\theta,I) \in \T^n \times
    \bar{B}}\left(\sup_{k \in
      \N^{2n}}\frac{(|k|+1)^2{s_0}^{\alpha|k|}|\partial^k
      H(\theta,I)|}{|k|!^\alpha}\right)<\infty, \quad c:=4\pi^2/3.  
\end{equation}
This definition can be extended to vector-valued function $X: \T^n
\times \bar{B} \rightarrow \R^p$ by setting 
\begin{equation}\label{defn2}
  |X|_{\alpha,s_0} := c\sup_{(\theta,I) \in \T^n \times
    \bar{B}}\left(\sup_{k \in
      \N^{2n}}\frac{(|k|+1)^2{s_0}^{\alpha|k|}|\partial^k
      X(\theta,I)|_1}{|k|!^\alpha}\right)<\infty 
\end{equation}
where $|\,.\,|_1$ is the $l_1$-norm of vectors in $\R^p$, or the sum
of the absolute values of the components. As a rule, we will use the
$l_1$-norm for vectors, so for simplicity we shall write
$|\,.\,|_1=|\,.\,|$. To emphasize the role of the ``Gevrey width''
$s_0$, we shall also say that $H$ is $(\alpha,s_0)$-Gevrey
if~\eqref{defn} holds. Observe that a function is $1$-Gevrey if and
only it is real-analytic, in which case the parameter $s_0>0$ is the
width of analyticity.

Properties of these Gevrey norms are described in Appendix~\ref{app2};
in particular we explain there the (inessential) role of the factor
$(|k|+1)^2$ and the normalizing constant $c>0$ in~\eqref{defn}.

\subsection{The $\alpha$-Bruno-R{\"u}ssmann condition}\label{sec:aBR}

Given $\omega_0 \in \R^n$, define the function
\begin{equation}\label{eqpsi}
\Psi_{\omega_0} : [1,+\infty) \rightarrow [\Psi_{\omega_0}(1),+\infty], \quad
  Q \mapsto \max \{|k\cdot\omega_0|^{-1}\; | \; k \in \Z^n, \; 0 <
  |k|\leq Q\}.
\end{equation}
This function $\Psi_{\omega_0}$ measures the size of the so-called small denominators which will
come into play in our computations. Call $\mathrm{BR}$ the set of
vectors $\omega_0$ satisfying the so-called \emph{Bruno-R{\"u}ssmann
  condition},
\begin{equation}\label{BR}
\int_{1}^{+\infty}\frac{\ln(\Psi_{\omega_0}(Q))}{Q^2}dQ < \infty \tag{BR}
\end{equation}
and, given $\alpha \geq 1$, call $\mathrm{BR}_\alpha$ the set of
vectors $\omega_0$ satisfying the \emph{$\alpha$-Bruno-R{\"u}ssmann condition}, which we define as
\begin{equation}\label{alphaBR}
\int_{1}^{+\infty}\frac{\ln(\Psi_{\omega_0}(Q))}{Q^{1+\frac{1}{\alpha}}}dQ <
\infty. \tag{$\mathrm{BR}_\alpha$}
\end{equation}
These conditions prevent $\Psi_{\omega_0}$ from growing too fast at
infinity.  If $\omega_0 \in \mathrm{BR} = \mathrm{BR}_1$, in
particular $\Psi_{\omega_0}(Q)$ is finite for all $Q$, i.e. $\omega_0$
is non-resonant. Besides, the set $\mathrm{BR}_\alpha$ decreases with
respect to $\alpha$. For example, if
$\Psi_{\omega_0}(Q) = \exp (Q^\beta)$ then
$\omega_0 \in \mathrm{BR}_\alpha$ if and only if $\beta < 1/\alpha$
(we let the reader check, using continued fractions if $n=2$, that the
set of vectors $\omega_0$ having such function $\Psi_{\omega_0}$ is
not empty).

Let $D_\tau$ be the set of $\tau$-Diophantine vectors ($\tau\geq n-1$),
i.e. for which there exists $\gamma>0$ such that
$\Psi(Q) \leq Q^\tau/\gamma$ for all $Q\geq 1$. $D_\tau$ is non-empty and has full measure if $\tau>n-1$~\cite{Rus75}. As
definitions show, for all $\alpha\geq 1$, we have
\(D_\tau \subset \mathrm{BR}_\alpha\).
Thus, as Example~\ref{ex:BRa} shows, 
\[\cap_{\alpha\geq 1} \mathrm{BR}_\alpha \setminus \cup_{\tau \geq n-1} D_\tau\] 
has zero-measure but is non-empty.

\smallskip Now assume that $\omega_0$ is non-resonant. The function
$\Psi_{\omega_0}$ is non-decreasing, piecewise constant, and has a
countable number of discontinuities. In the sequel, it will be more
convenient to work with a continuous version of $\Psi_{\omega_0}$: it
is not hard to prove (see, for instance, Appendix A of~\cite{BF13})
that one can find a continuous non-decreasing function
$\Psi : [1,\infty) \to [\Psi(1),+\infty)$ such that
$\Psi(1)=\Psi_{\omega_0}(1)$ and
\begin{equation}\label{foncpsi}
\Psi_{\omega_0}(Q) \leq \Psi(Q) \leq \Psi_{\omega_0}(Q+1), \quad Q \geq 1.
\end{equation}
For all $k \in \Z^n\setminus\{0\}$, we still have
\[ |k\cdot \omega_0| \geq 1/\Psi(|k|) \]
and in the condition~\eqref{alphaBR} (which defines $\omega_0 \in \mathrm{BR}_\alpha$), one may use $\Psi$ instead of $\Psi_{\omega_0}$. 

Let us now define the function
$$\Delta : [1,+\infty) \to [\Psi(1),+\infty),
\quad Q \mapsto Q\Psi(Q).$$
It is continuous and increasing, and thus is a homeomorphism, whose
functional inverse is
$$\Delta^{-1} : [\Psi(1),+\infty) \to [1,+\infty), \quad \Delta^{-1} \circ
\Delta =\Delta \circ \Delta^{-1}= \mathrm{Id}.$$


In Appendix~\ref{app1} we show that the set $\mathrm{BR}_\alpha$
agrees with the set $\mathrm{A}_\alpha$ defined by the condition
\begin{equation*}
  \label{BR2}%
  \int_{\Delta (1)}^{+\infty}\frac{dx}{x(\Delta^{-1}(x))^{\frac{1}{\alpha}}} <
  \infty. \tag{$\mathrm{A}_\alpha$} 
\end{equation*}  

\section{Main results}
\label{sec:main-results}

\subsection{KAM theorem for non-degenerate integrable Hamiltonians}
\label{KAM}

The image of the map $\Theta_0 : \T^n \rightarrow \T^n \times B$,
$q \mapsto (q,0)$, is an embedded torus invariant by the flow of $h$ carrying a quasi-periodic flow with frequency $\omega_0$. We shall
prove that this quasi-periodic invariant Gevrey-smooth embedded torus
is preserved by an arbitrary small perturbation, provided $h$ is
non-degenerate, $H$ is $\alpha$-Gevrey and $\omega_0$ satisfies the
$\alpha$-Bruno-R{\"u}ssmann condition.

\begin{Main}\label{classicalKAM}
  Let $H$ be as in~\eqref{Ham1}, where $H$ is $(\alpha,s_0)$-Gevrey,
  $\omega_0\in \mathrm{BR}_\alpha$ and $h$ is non-degenerate. Then
  there exists $0<s_0'<s_0$ such that for $\epsilon$ small enough,
  there exists an $(\alpha,s_0')$-Gevrey torus embedding
  $\Theta_{\omega_0} : \T^n \rightarrow \T^n \times B$ such that
  $\Theta_{\omega_0}(\T^n)$ is invariant by the Hamiltonian flow of
  $H$ and quasi-periodic with frequency $\omega_0$.  Moreover,
$\Theta_{\omega_0}$ is close to
  $\Theta_0$ in the sense that
  \[ |\Theta_{\omega_0}-\Theta_0|_{\alpha,s_0'} \leq
  c\sqrt{\epsilon} \]
  for some constant $c>0$ independent of $\epsilon$.
\end{Main}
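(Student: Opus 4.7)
The strategy is to deduce this non-degenerate statement from the more flexible parametric Theorem~\ref{KAMparameter}, whose proof carries out the full KAM iteration but dispenses with non-degeneracy by allowing a counter-term (equivalently, a parameter in the Hamiltonian). The only purpose of the non-degeneracy of $h$ in the current theorem is to pass from that parametric conclusion to the existence of a genuine torus for $H$ at the prescribed frequency $\omega_0$.

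Concretely, since $\nabla^2 h(0)$ is invertible, the inverse function theorem supplies a Gevrey-smooth map $\omega\mapsto p(\omega)$, defined for $\omega$ near $\omega_0$, with $\nabla h(p(\omega))=\omega$ and $p(\omega_0)=0$. For each such $\omega$, setting $H_\omega(q,p):=H(q,p+p(\omega))$ produces a Hamiltonian whose integrable part has frequency $\omega$ at the origin of the action space and whose perturbation still has $(\alpha,s_0)$-Gevrey norm $O(\epsilon)$ (with a negligible loss of width absorbed by Proposition~\ref{composition}). Applying Theorem~\ref{KAMparameter} to the family $\{H_\omega\}$ at the value $\omega=\omega_0\in\mathrm{BR}_\alpha$ yields a torus embedding $\Theta_{\omega_0}:\T^n\to\T^n\times B$ conjugating the flow of $H_{\omega_0}$ to the translation of frequency $\omega_0$; undoing the action translation (which is trivial at $\omega=\omega_0$) gives the invariant torus of $H$ announced in the statement.

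The $\sqrt{\epsilon}$ closeness to $\Theta_0$ in the Gevrey norm $|\cdot|_{\alpha,s_0'}$ should come directly from the quantitative estimates of Theorem~\ref{KAMparameter}: the square root, rather than the linear $\epsilon$, is the typical price for not pre-rescaling the action variable and matches what the parametric iteration naturally delivers when the size is tracked in terms of the original perturbation $\epsilon f$, whose gradient in $p$ can force an $O(\sqrt{\epsilon})$ correction to the underlying action before the scheme converges.

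The essential obstacle is \emph{not} this reduction but the proof of Theorem~\ref{KAMparameter} itself. At each step of the underlying Newton-like scheme one must compose the current Hamiltonian with a near-identity symplectic change of coordinates, and a fixed loss of Gevrey width per step would destroy any hope of convergence. Here Proposition~\ref{composition} is the critical ingredient: its arbitrarily small loss of width when the right-hand factor is close to the identity plays the exact role that complex analytic extensions play in R\"ussmann's analytic KAM scheme. The small-divisor control is then supplied by the periodic-approximation strategy of~\cite{BF13,BF14}, which replaces a cohomological equation with frequency $\omega_0$ by a sequence of trivial equations at periodic frequencies; the convergence of the resulting series is exactly controlled by the finiteness of $\int_1^{\infty}\ln\Psi(Q)\,Q^{-1-1/\alpha}\,dQ$, i.e.\ the $\alpha$-Bruno-R\"ussmann condition, so that this is precisely the weakest arithmetic hypothesis under which the scheme closes.
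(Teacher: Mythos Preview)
Your overall strategy---introduce the frequency $\omega=\nabla h(p_0)$ as a parameter via the local inverse $p(\omega)$ of $\nabla h$, then apply the parametric Theorem~\ref{KAMparameter}---is exactly what the paper does. But there is a genuine gap in your second paragraph.

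Theorem~\ref{KAMparameter} does \emph{not} produce an invariant torus for $H_{\omega_0}$. Its output is a \emph{shifted} parameter $\omega_0^*$ with $|\omega_0^*-\omega_0|\le c_3\mu$, together with an embedding $\Phi^*_{\omega_0}$ such that $H_{\omega_0^*}\circ\Phi^*_{\omega_0}(\theta,I)=e_0^*+\omega_0\cdot I+R^*(\theta,I)$. Thus the torus $\Phi^*_{\omega_0}(\T^n\times\{0\})$ is invariant for $H_{\omega_0^*}$, not for $H_{\omega_0}$. Since $H_\omega(q,I)=H(q,I+p(\omega))$, converting this into a torus for $H$ requires translating the action by $p_0^*:=p(\omega_0^*)$, which is of size $\sim\mu=\sqrt{\varepsilon}$ and is certainly \emph{not} ``trivial at $\omega=\omega_0$''. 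Your sentence to that effect misidentifies where the non-degeneracy is actually spent: it is needed precisely here, to ensure that $\omega_0^*$ lies in the range of $\nabla h$ so that $p_0^*$ exists. Without this step you have a torus for the wrong Hamiltonian.

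The paper carries this out explicitly: after Theorem~\ref{KAMparameter} returns $\omega_0^*$, one finds $p_0^*$ with $\nabla h(p_0^*)=\omega_0^*$, bounds $|p_0^*|\le c\sqrt{\varepsilon}$, and sets $\Theta_{\omega_0}(\theta)=(\theta+E^*(\theta),\,G^*(\theta)+p_0^*)$. Incidentally, this non-trivial action shift is itself an $O(\sqrt{\epsilon})$ contribution to $|\Theta_{\omega_0}-\Theta_0|$; the square root really arises because condition~\eqref{cond0} of Theorem~\ref{KAMparameter} forces $\mu\ge\sqrt{\varepsilon}$, and the embedding estimates~\eqref{estim0} are of order $\mu$---not from the heuristic you give about ``not pre-rescaling the action''.
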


Theorem~\ref{classicalKAM} will be deduced from a KAM theorem for a
Hamiltonian with parameters, for which a quantitative statement is
given in~\S\ref{sec:KAMparam}. Let us also state the corresponding
iso-energetic and non-autonomous time-periodic versions.

We say that the integrable Hamiltonian $h$ is \emph{iso-energetically
  non-degenerate} at $0$ if the so-called bordered Hessian of $h$,
\[\begin{pmatrix} 
\nabla^2 h(0) & ^{t}\nabla h(0) \\
\nabla h(0) & 0 
\end{pmatrix},\]
has a non-zero determinant. Under this assumption, the unperturbed torus $p=0$, with energy $h(0)$, can be continued to a torus with the same energy but with a frequency of the form $\lambda\omega_0$ for $\lambda$ close to one. 

\begin{Main}\label{isoKAM}
  Let $H$ be as in~\eqref{Ham1}, where $H$ is $(\alpha,s_0)$-Gevrey,
  $\omega_0\in \mathrm{BR}_\alpha$ and $h$ is iso-energetically non-degenerate. Then
  there exists $0<s_0'<s_0$ such that for $\epsilon$ small enough,
  there exist $\lambda \in \R^*$ and an $(\alpha,s_0')$-Gevrey torus embedding
  $\Theta_{\omega_0} : \T^n \rightarrow \T^n \times B$ such that
  $\Theta_{\omega_0}(\T^n)$ is invariant by the Hamiltonian flow of
  $H$, contained in $H^{-1}(h(0))$ and quasi-periodic with frequency $\lambda\omega_0$.  Moreover, $\lambda$ is close to one and
$\Theta_{\omega_0}$ is close to
  $\Theta_0$ in the sense that
  \[ |\lambda-1| \leq c\sqrt{\epsilon}, \quad |\Theta_{\omega_0}-\Theta_0|_{\alpha,s_0'} \leq
  c\sqrt{\epsilon} \]
  for some constant $c>0$ independent of $\epsilon$.
\end{Main}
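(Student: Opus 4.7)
The natural approach is to deduce Theorem~\ref{isoKAM} from the parameter version Theorem~\ref{KAMparameter} (which does not require non-degeneracy), by the classical Arnold-type reduction in which both a frequency rescaling $\lambda$ near $1$ and a translation $p_*$ of the action near $0$ are treated as free parameters, with the iso-energetic non-degeneracy of $h$ invoked at the very end to invert the resulting frequency-energy map.

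First, I would observe that $\lambda \omega_0 \in \mathrm{BR}_\alpha$ for every $\lambda>0$, because $\Psi_{\lambda \omega_0} = \lambda^{-1} \Psi_{\omega_0}$ and the condition~\eqref{alphaBR} is insensitive to a positive multiplicative rescaling of $\Psi$. Hence, for each $\lambda$ in a small neighborhood of $1$ and each $p_*$ near $0$, one can consider the shifted Hamiltonian
$$
H_{p_*,\lambda}(q,p) := H(q,p_*+p) - \lambda \omega_0\cdot p,
$$
whose linear-in-$p$ term at $p=0$ is designed to pin the candidate torus frequency to $\lambda \omega_0$. Applying Theorem~\ref{KAMparameter} to this family should yield, uniformly in the parameters, an $(\alpha,s_0')$-Gevrey torus embedding $\Theta_{p_*,\lambda,\epsilon}:\T^n\to\T^n\times B$ which is invariant and quasi-periodic of frequency $\lambda \omega_0$ for the \emph{modified} Hamiltonian $H - \mu(p_*,\lambda,\epsilon)\cdot p$, where $\mu(p_*,\lambda,\epsilon)\in\R^n$ is Kolmogorov's counter-term measuring the discrepancy between the prescribed frequency and the one actually realized by $H$ on the perturbed torus.

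Next, to meet the two constraints of the theorem simultaneously, I would impose the finite-dimensional system
$$
\mu(p_*,\lambda,\epsilon)=0,\qquad E(p_*,\lambda,\epsilon)=h(0),
$$
in the $n+1$ unknowns $(p_*,\lambda)\in\R^n\times\R$, where $E$ denotes the (constant) value of $H$ on the torus $\Theta_{p_*,\lambda,\epsilon}(\T^n)$. At $\epsilon=0$, this system reduces to $\nabla h(p_*)=\lambda\omega_0$ and $h(p_*)=h(0)$, and its Jacobian in $(p_*,\lambda)$ at $(0,1)$ coincides, up to sign changes on one column, with the bordered Hessian appearing in the statement of Theorem~\ref{isoKAM}. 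The iso-energetic non-degeneracy of $h$ is precisely the assertion that this Jacobian is invertible, so the implicit function theorem produces a unique solution $(p_*(\epsilon),\lambda(\epsilon))$ close to $(0,1)$ for $\epsilon$ small, and the desired object is $\Theta_{\omega_0}:=\Theta_{p_*(\epsilon),\lambda(\epsilon),\epsilon}$. The estimates $|\lambda-1|\leq c\sqrt{\epsilon}$ and $|\Theta_{\omega_0}-\Theta_0|_{\alpha,s_0'}\leq c\sqrt{\epsilon}$ then follow from the quantitative Gevrey bounds in Theorem~\ref{KAMparameter} together with standard inverse function theorem estimates.

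The main obstacle I foresee is extracting from Theorem~\ref{KAMparameter} enough regularity of $\mu$ and $E$ in the parameters $(p_*,\lambda)$ — essentially Lipschitz continuity with bounds uniform in $\epsilon$ — to legitimately apply the implicit function theorem. The KAM construction in Gevrey class yields only a Cantor-like set of admissible parameters in general, but because here $\lambda\omega_0$ is parametrized along a single ray of the frequency space on which the $\alpha$-Bruno-R\"ussmann condition is preserved identically, no such Whitney-type issue arises and a clean Lipschitz (or even $C^1$) dependence in the reduced parameter space should be available from the proof of Theorem~\ref{KAMparameter}. Granting this, the rest of the argument is a routine perturbation of a finite-dimensional non-degenerate system.
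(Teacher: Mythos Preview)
Your approach differs from the paper's. The paper gives no detailed proof of Theorem~\ref{isoKAM}: immediately after stating it and Theorem~\ref{timeKAM}, the authors simply assert that both ``are essentially equivalent statements and can be easily deduced from Theorem~\ref{classicalKAM}'', referring to Chapter~2 of~\cite{TreBook} for the analytic case and claiming the arguments carry over verbatim to the Gevrey setting. So the paper's route is Theorem~\ref{classicalKAM} $\Rightarrow$ Theorem~\ref{isoKAM} via a classical reduction taken as a black box, whereas you propose to go directly from Theorem~\ref{KAMparameter} using the bordered Hessian and the implicit function theorem.

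Your route is a standard alternative and has the merit of making the role of iso-energetic non-degeneracy transparent, but it carries two costs that the paper's shortcut avoids. The first is the one you correctly flag: Theorem~\ref{KAMparameter} produces $\omega_0^*$ and $e_0^*$ only for the fixed target $\omega_0$, and you need at least Lipschitz dependence on $\lambda$ to run the implicit function theorem; this is extractable from the proof (since $\Psi_{\lambda\omega_0}=\lambda^{-1}\Psi_{\omega_0}$ and the iteration of \S\ref{sec:iteration} is uniform for $\lambda$ near $1$), but it does require going back into \S\ref{sec:proof}. The second you do not flag: Theorem~\ref{KAMparameter} is built around an $n$-dimensional frequency parameter $\omega$, and the passage from the action shift $p_*$ to $\omega=\nabla h(p_*)$ used in \S\ref{sec:proof-classical} relies on $\nabla h$ being a local diffeomorphism, which is precisely what you do \emph{not} assume here. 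To make your $(n{+}1)$-dimensional system $(\mu,E)=(0,h(0))$ well-posed you must either augment Theorem~\ref{KAMparameter} with an extra scalar parameter, or treat $\omega$ as an artificial free parameter independent of $p_*$ and impose $\omega_0^*=\nabla h(p_*)$ as $n$ of your $n{+}1$ equations. Neither obstacle is fatal, but together they make the argument less ``routine'' than your last sentence suggests; the paper's deferral to the already-proved Theorem~\ref{classicalKAM} sidesteps both.
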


We can also look at the non-autonomous time-periodic version; we consider a slightly different setting by looking at a Hamiltonian function
$\tilde{H} : \T^n \times B \times \T \rightarrow \R$ of the form
\begin{equation}\label{HamT}
\begin{cases}\tag{$\tilde{*}$}
  \tilde{H}(q,p)= h(p) +\epsilon f(q,p,t), \\
  \nabla h(0):=\omega_0 \in \R^n.
\end{cases}  
\end{equation} 
It is better to consider the unperturbed torus $p=0$ as an invariant torus for the integrable Hamiltonian $\tilde{h}: B \times \R$ defined by $\tilde{h}(p,e):=h(p)+e$: it is then quasi-periodic with frequency $\tilde{\omega}_0:=(\omega_0,1)$, has dimension $n+1$ and is the image of the trivial embedding $\tilde{\Theta}_0 : \T^n \times \T \rightarrow \T^n \times B \times \T$.

\begin{Main}\label{timeKAM}
Let $\tilde{H}$ be as in~\eqref{HamT}, where $\tilde{H}$ is $(\alpha,s_0)$-Gevrey,
  $\omega_0\in \mathrm{BR}_\alpha$ and $h$ is non-degenerate. Then
  there exists $0<s_0'<s_0$ such that for $\epsilon$ small enough,
  there exists an $(\alpha,s_0')$-Gevrey torus embedding
  $\tilde{\Theta}_{\omega_0} : \T^n \times \T \rightarrow \T^n \times B \times \T$ such that
  $\tilde{\Theta}_{\omega_0}(\T^n \times \T)$ is invariant by the Hamiltonian flow of
  $\tilde{H}$ and quasi-periodic with frequency $\tilde{\omega}_0$.  Moreover,
$\tilde{\Theta}_{\omega_0}$ is close to
  $\tilde{\Theta}_0$ in the sense that
  \[ |\tilde{\Theta}_{\omega_0}-\tilde{\Theta}_0|_{\alpha,s_0'} \leq
  c\sqrt{\epsilon} \]
  for some constant $c>0$ independent of $\epsilon$.
\end{Main}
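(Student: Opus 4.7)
The plan is to reduce Theorem~\ref{timeKAM} to the parametric KAM theorem (Theorem~\ref{KAMparameter}) by the classical Poincar\'e suspension trick. First I would introduce a dummy action $e \in \R$ symplectically conjugate to time $t \in \T$ and pass to the autonomous Hamiltonian
\[\bar{H}(q, p, t, e) := h(p) + e + \epsilon f(q, p, t)\]
on the enlarged phase space $\T^n \times B \times \T \times \R$. Its Hamilton equations in $(q, p, t)$ coincide with those generated by $\tilde{H}$ (since $\partial_e \bar{H} = 1$), while the evolution of $e$ is slaved by $\dot{e} = -\epsilon \partial_t f$. The integrable part $\bar{h}(p, e) := h(p) + e$ leaves the $(n+1)$-torus $\{p = 0, e = 0\}$ invariant, carrying a linear flow of frequency $\tilde{\omega}_0 = (\omega_0, 1) \in \R^{n+1}$, and adding the linear term $e$ does not affect Gevrey regularity, so $\bar{H}$ remains $(\alpha, s_0)$-Gevrey.

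The principal obstacle is that $\bar{h}$ is \emph{degenerate} in the classical KAM sense: its Hessian $\operatorname{diag}(\nabla^2 h(0), 0)$ is singular in the $e$-direction, so Theorem~\ref{classicalKAM} cannot be applied to $\bar{H}$ directly. This is precisely the situation Theorem~\ref{KAMparameter} is built for: it dispenses with non-degeneracy and allows the target frequency to be matched by varying only the non-degenerate directions. Here only the $n$ actions $p$ are to be adjusted, since the $t$-frequency is rigidly equal to $1$ because $\bar{H}$ is independent of $e$; the non-degeneracy of $h$ at the origin then supplies the inverse-function-theorem step that selects the correct value of $p$ for the prescribed frequency $\tilde{\omega}_0$, exactly as in the derivation of Theorem~\ref{classicalKAM} from Theorem~\ref{KAMparameter} carried out in Section~\ref{sec:proofs}.

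Before invoking Theorem~\ref{KAMparameter} I would check that $\tilde{\omega}_0 = (\omega_0, 1)$ satisfies the $\alpha$-Bruno-R\"ussmann condition in $\R^{n+1}$. The relevant small divisors are $k \cdot \omega_0 + k_{n+1}$ with $(k, k_{n+1}) \in \Z^{n+1} \setminus \{0\}$; the case $k = 0$ contributes trivially since $|k_{n+1}| \geq 1$, while for $k \neq 0$ the closest-integer bound on $k_{n+1}$ reduces the estimate to a bound on $\Psi_{\omega_0}$, so that $\Psi_{\tilde{\omega}_0}(Q)$ is controlled by $\Psi_{\omega_0}(Q)$ up to a polynomial factor in $Q$ and the integrability condition~\eqref{alphaBR} is preserved. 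Once Theorem~\ref{KAMparameter} produces an invariant embedding $\T^{n+1} \to \T^n \times B \times \T \times \R$ of frequency $\tilde{\omega}_0$, the desired embedding $\tilde{\Theta}_{\omega_0}$ is obtained by forgetting the $e$-component; this projection is trivially $(\alpha, s_0')$-Gevrey, and the quantitative estimate $|\tilde{\Theta}_{\omega_0} - \tilde{\Theta}_0|_{\alpha, s_0'} \leq c\sqrt{\epsilon}$ transfers directly from the corresponding bound in Theorem~\ref{KAMparameter}.
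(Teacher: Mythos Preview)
Your suspension-then-parameters strategy is sound and is in fact more explicit than what the paper provides: the paper does not prove Theorem~\ref{timeKAM} at all beyond the remark that Theorems~\ref{isoKAM} and~\ref{timeKAM} are essentially equivalent and both deducible from Theorem~\ref{classicalKAM} by standard arguments (referring to Treschev's book for the analytic case). The textbook route is usually via Theorem~\ref{isoKAM}, noting that the suspended integrable part $\bar h(p,e)=h(p)+e$ is iso-energetically non-degenerate precisely when $h$ is non-degenerate; your route through Theorem~\ref{KAMparameter} is an equally valid alternative that sidesteps this intermediate step.

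There is, however, a genuine gap in your arithmetic check. The claim that for $k\neq 0$ ``the closest-integer bound on $k_{n+1}$ reduces the estimate to a bound on $\Psi_{\omega_0}$'' is incorrect: minimizing $|k\cdot\omega_0+k_{n+1}|$ over $k_{n+1}\in\Z$ yields the distance from $k\cdot\omega_0$ to the nearest integer, and this quantity is \emph{not} controlled by $|k\cdot\omega_0|$ (which is what $\Psi_{\omega_0}$ governs). In particular, $\omega_0\in\mathrm{BR}_\alpha$ does not imply $\tilde\omega_0=(\omega_0,1)\in\mathrm{BR}_\alpha$: take $\omega_0=(1,\sqrt 2)$, which is Diophantine and hence in every $\mathrm{BR}_\alpha$, while $(1,\sqrt 2,1)$ is resonant. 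What the time-periodic problem genuinely requires is $\tilde\omega_0\in\mathrm{BR}_\alpha$; either read the hypothesis of Theorem~\ref{timeKAM} this way (which is the natural convention and what the cited reduction in Treschev actually uses), or acknowledge explicitly that the statement as written needs this stronger assumption.
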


Theorem~\ref{isoKAM} and Theorem~\ref{timeKAM} are essentially equivalent statements and can be easily deduced from Theorem~\ref{classicalKAM}; in the analytic case details are given in~\cite{TreBook}, Chapter 2, but it is plain to observe that the arguments still work in the Gevrey case.

\subsection{Destruction of invariant tori}\label{sec:destruction}


According to Theorem~\ref{classicalKAM}, the $\alpha$-Bruno-R{\"u}ssmann
condition is sufficient for the preservation of an invariant torus
under an $\alpha$-Gevrey perturbation. A natural question is: is it
necessary? To this question, here we only bring a partial answer,
which circumscribes the optimal arithmetic condition, if any.
Following Bessi~\cite{Bes00}, one can show that if $\omega=\omega_0$
satisfies a condition (the condition~\eqref{conddestruction} defined
below), the torus can be destroyed. In particular, this shows that the
exponent $1+1/\alpha$ in~\eqref{alphaBR} cannot be replaced by a
strictly larger exponent. As a matter of fact, the example of Bessi
already shows this in the analytic case $\alpha=1$; our observation
here is that Bessi's example gives a similar result for any
$\alpha \geq 1$.

\begin{Main}\label{destruction}
  Given $\alpha \geq 1$, assume that the vector $\omega \in \R^n$
  satisfies the following condition:
\begin{equation}\label{conddestruction}
\limsup_{Q \rightarrow +\infty}
\frac{\ln(\Psi_\omega(Q))}{Q^{1/\alpha}}>0.\tag{$\mathrm{B}_\alpha$} 
\end{equation}
Then an invariant torus with frequency $\omega$ can be destroyed by an
arbitrarily small $\alpha$-Gevrey perturbation.
\end{Main}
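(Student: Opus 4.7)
My plan is to adapt Bessi's variational construction \cite{Bes00} to the Gevrey category. The structure of the argument is unchanged; what shifts in the Gevrey setting is the balance between the amplitude of the perturbation and its Fourier wavelength, and this balance is exactly what condition \eqref{conddestruction} governs. The argument splits naturally into a dynamical part, namely Bessi's destruction criterion, which I import essentially verbatim, and an analytic part, which amounts to estimating the Gevrey norm of a single Fourier mode.

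First, from \eqref{conddestruction} I extract a constant $c_0>0$ and a sequence $(k_j)\subset\Z^n\setminus\{0\}$ with $|k_j|\to\infty$ such that
\[ |k_j\cdot\omega|^{-1}\geq e^{c_0|k_j|^{1/\alpha}}. \]
I fix a concrete non-degenerate integrable model, e.g.\ $h(p)=\omega\cdot p+\tfrac12|p|^2$, whose torus $\{p=0\}$ carries a quasi-periodic flow of frequency $\omega$, and I consider the single-mode perturbations
\[ f_j(q)=\eta_j\cos(2\pi k_j\cdot q), \qquad \eta_j>0 \text{ to be chosen.} \]

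The dynamical input I reuse from Bessi asserts: there is a constant $C_0>0$, depending only on $h$, such that whenever $\eta_j\geq C_0|k_j\cdot\omega|^2$, the Hamiltonian $h+f_j$ admits no invariant Lagrangian torus of frequency $\omega$ that is $C^0$-close to $\{p=0\}$. Bessi's proof is a Percival--Mather variational argument: one produces a minimizer, among loops of rotation vector $\omega$, of the classical action, whose graph sits at a macroscopic $\mathcal{O}(\sqrt{\eta_j})$ distance from $\{p=0\}$, contradicting the supposed $C^0$-closeness of the putative invariant torus. This step uses only smoothness of $f_j$ and so transfers verbatim to the Gevrey setting; the care required is to check that the constant $C_0$ can be taken uniform in $j$, which is immediate from the explicit form of $h$ and the single-mode structure of $f_j$. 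I expect this re-reading of Bessi's dynamical estimate to be the most delicate step.

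On the Gevrey side, a direct computation on a single Fourier mode (along the lines of Appendix~\ref{app2}) gives
\[ |f_j|_{\alpha,s_0}\leq C_1\eta_j\exp\bigl(\alpha s_0(2\pi|k_j|)^{1/\alpha}\bigr). \]
Setting $\eta_j=C_0|k_j\cdot\omega|^2$ and invoking the resonance bound,
\[ |f_j|_{\alpha,s_0}\leq C_2\exp\Bigl(-\bigl(2c_0-\alpha(2\pi)^{1/\alpha}s_0\bigr)|k_j|^{1/\alpha}\Bigr). \]
Fixing any Gevrey width $s_0$ with $\alpha(2\pi)^{1/\alpha}s_0<2c_0$ forces $|f_j|_{\alpha,s_0}\to 0$ as $j\to\infty$, so the sequence $(f_j)$ furnishes arbitrarily small $(\alpha,s_0)$-Gevrey perturbations destroying the invariant torus, which concludes the theorem.
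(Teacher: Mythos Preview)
Your overall strategy---estimate the Gevrey norm of a trigonometric mode and balance it against Bessi's dynamical threshold---is exactly what the paper does, and your Gevrey norm computation is correct. The gap is in the dynamical input: a \emph{single} Fourier mode does not destroy the torus.

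Concretely, with $h(p)=\omega\cdot p+\tfrac12|p|^2$ and $f_j(q)=\eta_j\cos(2\pi k_j\cdot q)$, the perturbed Hamiltonian is completely integrable. In the variables $\xi=k_j\cdot q$, $\tilde\zeta=k_j\cdot p+k_j\cdot\omega$ one gets a pendulum, decoupled from free rotation in the directions orthogonal to $k_j$. The pendulum has rotational orbits of every nonzero mean speed, in particular one of mean speed $k_j\cdot\omega$; lifting back, this yields a smooth Lagrangian invariant torus of frequency $\omega$, which is a graph over $\T^n$ and sits at distance $O(\sqrt{\eta_j})$ from $\{p=0\}$. Since $\eta_j\to 0$, this torus is as close to the unperturbed one as you like. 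So your asserted criterion ``$\eta_j\geq C_0|k_j\cdot\omega|^2$ implies no nearby torus of frequency $\omega$'' is false for this family, and the Percival--Mather minimizer you invoke \emph{is} that persisting torus rather than an obstruction to it.

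This is precisely why Bessi's construction (and the paper's Gevrey adaptation, see~\eqref{HamBessiG}) uses a \emph{two}-mode perturbation
\[
\varepsilon\,\nu_{\alpha,j,s}\bigl(1-\cos(k_j\cdot\theta)\bigr)\bigl(1+\mu\,\tilde\nu_{\alpha,j,s}\cos(\tilde k_j\cdot\theta)\bigr),
\]
with an auxiliary $\tilde k_j$ chosen so that $\tilde k_j\cdot k_j=0$ and $|\tilde k_j\cdot\omega|\gtrsim|\tilde k_j|$. The coupling term breaks integrability and is what drives Bessi's variational argument (an Arnold-type mechanism producing orbits that cross the would-be invariant graph). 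Once you restore this second mode, your Gevrey bookkeeping goes through unchanged: each of the finitely many Fourier modes $k_j$, $\tilde k_j$, $k_j\pm\tilde k_j$ obeys the same bound $|P_k|_{\alpha,s}\lesssim e^{c\,s|k|^{1/\alpha}}$, and the normalizing factors $\nu_{\alpha,j,s}$, $\tilde\nu_{\alpha,j,s}$ make the total Gevrey norm $O(\varepsilon)$ uniformly in $j$. The condition $s_0>s$ (equivalently your $\alpha(2\pi)^{1/\alpha}s_0<2c_0$, up to constants) is then exactly what \eqref{conddestruction} provides.
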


Thus the condition that $\omega_0$ does not
satisfy~\eqref{conddestruction}, namely
\begin{equation}\label{coho}
\lim_{Q \rightarrow +\infty}
\frac{\ln(\Psi_\omega(Q))}{Q^{1/\alpha}}=0, \tag{$\mathrm{R}_\alpha$}
\end{equation}
is a necessary condition for the conclusion of
Theorem~\ref{classicalKAM} to hold true. For $\alpha=1$, this
condition~\eqref{coho} is actually a sufficient (and most probably
necessary) condition to solve the cohomological equation associated to
$\omega$ (see~\cite{Rus75}); in the general case $\alpha \geq 1$ this
should also be true but we couldn't find a reference. Let us also note
that~\eqref{coho} is implied by (but clearly not equivalent to) the
condition that $\omega \in \mathrm{BR}_\alpha$, see Remark~\ref{rem}
in Appendix~\ref{app1}.

For a more precise statement and how this follows from~\cite{Bes00},
we refer to Theorem~\ref{BessiG} in Section~\ref{sec:bessi}. It is
likely that one could improve this result for $\alpha>1$ by using
perturbations with compact support as in \cite{CW13}.

Observe that for any $\alpha \geq 1$ and any $0 < \beta < \alpha$,
vectors $\omega \in \R^n$ for which
\[ \Psi_\omega(Q) \sim e^{Q^{1/\alpha}} \]
satisfies~\eqref{conddestruction} but also the $\beta$-Bruno-R{\"u}ssmann
condition. (That such vectors do exist is a classical matter in number
theory.) The following corollary is then obvious.

\begin{corollary}\label{cordestruction}
For any $\alpha \geq 1$ and any $0 < \beta < \alpha$, there exist invariant tori with frequency vectors $\omega \in \mathrm{BR}_\beta$ which can be destroyed by an arbitrary small
$\alpha$-Gevrey perturbation. In particular, there exist invariant tori with frequency vectors $\omega \in \mathrm{BR}$ which can be destroyed by an arbitrary small Gevrey non-analytic perturbation.
\end{corollary}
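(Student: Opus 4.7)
The plan is to exhibit, for each pair $(\alpha,\beta)$ with $1\leq\beta<\alpha$, explicit frequencies $\omega$ enjoying the two apparently competing properties: they lie in $\mathrm{BR}_\beta$, yet they satisfy the destruction condition~\eqref{conddestruction}, so that Theorem~\ref{destruction} applies. Following the hint in the paragraph preceding the corollary, I would look for $\omega$ whose small-divisor function satisfies $\Psi_\omega(Q)\sim e^{Q^{1/\alpha}}$ as $Q\to+\infty$.

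First I would dispose of the existence issue. In dimension $n=2$, any vector $\omega=(1,\xi)$ with $\xi$ irrational has $\Psi_\omega(Q)$ governed by the denominators of the continued-fraction convergents of $\xi$: choosing the partial quotients $a_k$ to grow like $a_{k+1}\asymp\exp(q_k^{1/\alpha})$ (where $q_k$ is the $k$-th denominator) produces a $\xi$ for which $\Psi_\omega(Q)$ has the prescribed asymptotic behaviour, and one can obviously realise any desired asymptotic equivalent by this device. For $n\geq 3$, one can take $\omega=(1,\xi,0,\dots,0)$ or argue by a standard Liouville-type construction; this is the classical fact alluded to in the statement (``That such vectors do exist is a classical matter in number theory'').

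Next, I would verify the two arithmetic properties of such $\omega$. The destruction condition is immediate: $\ln\Psi_\omega(Q)/Q^{1/\alpha}\to 1>0$, so~\eqref{conddestruction} holds and Theorem~\ref{destruction} produces an arbitrarily small $\alpha$-Gevrey perturbation that destroys the torus of frequency $\omega$. For the $\beta$-Bruno-R\"ussmann condition, I would compute
\[
\int_{1}^{+\infty}\frac{\ln(\Psi_\omega(Q))}{Q^{1+\frac{1}{\beta}}}\,dQ
\;\asymp\;\int_{1}^{+\infty}\frac{dQ}{Q^{1+\frac{1}{\beta}-\frac{1}{\alpha}}},
\]
which converges precisely because $\beta<\alpha$ forces $1/\beta-1/\alpha>0$. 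Hence $\omega\in\mathrm{BR}_\beta$. This proves the first sentence of the corollary.

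For the second sentence, I would simply specialise to $\beta=1$ and any $\alpha>1$: the resulting $\omega$ lies in $\mathrm{BR}_1=\mathrm{BR}$, while the destroying perturbation given by Theorem~\ref{destruction} is $\alpha$-Gevrey, hence in particular Gevrey but not analytic (since $\alpha>1$ and since the perturbation is precisely of the form needed to violate~\eqref{conddestruction} at exponent $1/\alpha$, it cannot be extended to a $1$-Gevrey, i.e.\ analytic, function). No step is really an obstacle here, since all the analytic/dynamical content is packaged into Theorem~\ref{destruction}; the only mild point is the construction of $\omega$ with the sharp behaviour $\Psi_\omega(Q)\sim e^{Q^{1/\alpha}}$, which in dimensions $n\geq 3$ requires either reducing to the $n=2$ case by a trivial embedding or a direct continued-fraction-type argument, but is otherwise standard.
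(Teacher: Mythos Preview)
Your approach is essentially the paper's own: the paragraph immediately preceding the corollary already observes that any $\omega$ with $\Psi_\omega(Q)\sim e^{Q^{1/\alpha}}$ satisfies both~\eqref{conddestruction} and the $\beta$-Bruno--R\"ussmann condition for every $\beta<\alpha$, and then declares the corollary ``obvious''. Your verification of these two facts via the convergence of $\int Q^{-1-1/\beta+1/\alpha}\,dQ$ is exactly what is intended.

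Two points deserve correction. First, your suggestion $\omega=(1,\xi,0,\dots,0)$ for $n\geq 3$ produces a \emph{resonant} vector (take $k=(0,0,1,0,\dots,0)$), so $\Psi_\omega\equiv+\infty$ and $\omega\notin\mathrm{BR}_\beta$. Either stay in dimension $n=2$ (the corollary does not fix $n$) or complete the remaining components with badly approximable numbers. Second, your justification of the ``non-analytic'' clause is a misreading: the destroying perturbations built in Section~\ref{sec:bessi} are trigonometric polynomials, hence analytic. The phrase ``Gevrey non-analytic perturbation'' refers to smallness in an $\alpha$-Gevrey norm with $\alpha>1$ (a non-analytic Gevrey \emph{class}), not to the perturbation itself failing to be analytic; Bessi's perturbation has small $(\alpha,s)$-Gevrey norm while its analytic norm is not small, which is precisely why the torus survives analytic perturbations (since $\omega\in\mathrm{BR}$) but not these $\alpha$-Gevrey-small ones.
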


\subsection{KAM theorem for constant vector fields on the torus}

Now we state a Gevrey version of Arnold's normal form theorem for vector fields on the torus.

\begin{Main}\label{KAMvector}
Let $\omega_0 \in \mathrm{BR}_\alpha$ and $X \in G_{\alpha,s}(\T^n,\R^n)$ a vector field on $\T^n$ of the
  form
  \[X = \omega_0 + B, \quad |B|_{\alpha,s} \leq \mu.\]
  Then, for $\mu$ sufficiently small, there exist a vector
  $\omega_0^* \in \R^n$ and an $(\alpha,s/2)$-Gevrey diffeomorphism
  $\Xi : \T^n \to \T^n$ such that $X+\omega_0^*-\omega_0$ is conjugate
  to $\omega_0$ via $\Xi$:
\[ \Xi^*(X+\omega_0^*-\omega_0)=\omega_0.\] 
Moreover, we have the estimate
\[ |\omega_0^*-\omega_0| \leq c\mu, \quad |\Xi-\mathrm{Id}|_{\alpha,s/2} \leq c\mu \]
for some constant $c\geq 1$ independent of $\mu$.
\end{Main}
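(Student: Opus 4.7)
The plan is to run a Newton-type (KAM) iteration in the $\alpha$-Gevrey category, following the strategy outlined in the introduction: solve only cohomological equations associated with periodic approximations of $\omega_0$, and rely on Proposition~\ref{composition} to control the loss of Gevrey width incurred at each composition. Accompanying the iteration are a decreasing sequence of widths $s = s_0 > s_1 > \cdots$ with $s_j \searrow s/2$ and $\sum_{j\ge 0}(s_j - s_{j+1}) \leq s/2$, cutoffs $Q_j \to +\infty$, and periodic vectors $\omega^{(j)} \in \Q^n$ approximating $\omega_0$ with $|k\cdot\omega^{(j)}|^{-1} \leq \Psi(Q_j)$ whenever $0 < |k| \leq Q_j$ and $k\cdot\omega^{(j)} \neq 0$. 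The $\alpha$-Bruno-R\"ussmann condition, in the form~\eqref{BR2}, is exactly what allows $(s_j, Q_j)$ to be tuned so that the scheme closes.

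At stage $j$, I have a vector field $X_j = \omega_j + B_j$ on $\T^n$, with $\omega_j$ a constant close to $\omega_0$ and $|B_j|_{\alpha, s_j} \leq \mu_j$. I seek $\Xi_j = \id + U_j$ and a frequency update $\omega_{j+1} := \omega_j + [B_j]$ such that
\[ \Xi_j^* X_j = \omega_{j+1} + B_{j+1}, \qquad |B_{j+1}|_{\alpha, s_{j+1}} \leq \mu_{j+1} \sim \mu_j^2. \]
Expanding around $U_j = 0$ yields the linearized cohomological equation $\partial_{\omega_j} U_j = B_j - [B_j]$, which I solve approximately by replacing $\omega_j$ with $\omega^{(j)}$ and truncating Fourier modes to $|k| \leq Q_j$: set $\hat U_j(k) := \hat B_j(k)/(2\pi i\, k\cdot \omega^{(j)})$ for $0 < |k| \leq Q_j$ with $k \cdot \omega^{(j)} \neq 0$, and $\hat U_j(k) := 0$ otherwise. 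The resulting error decomposes into a high-mode tail (exponentially small in $Q_j^{1/\alpha}$ thanks to the Gevrey decay of the Fourier coefficients of $B_j$) and a resonant-mode tail controlled by $|\omega_j - \omega^{(j)}|\, |B_j|$, both absorbed into the quadratic remainder by a suitable choice of $Q_j$ and $\omega^{(j)}$.

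The Fourier characterization of Gevrey norms in Appendix~\ref{app2} gives the bound $|U_j|_{\alpha, s_j'} \leq c\, \Psi(Q_j)\, \mu_j$ on an intermediate width $s_j' \in (s_{j+1}, s_j)$. The crucial input is Proposition~\ref{composition}: pulling $X_j$ back by $\Xi_j = \id + U_j$, with $U_j$ small, costs an arbitrarily small loss of Gevrey width, so the second-order Taylor remainder yields
\[ |B_{j+1}|_{\alpha, s_{j+1}} \leq c'\, \Psi(Q_j)^2\, \mu_j^2 \]
up to the negligible approximation errors. This is quadratic modulo the small-divisor factor $\Psi(Q_j)^2$, a balance that iteratively closes precisely under the $\alpha$-Bruno-R\"ussmann condition. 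A standard iteration lemma then forces $\mu_j \to 0$ super-exponentially, the partial compositions $\Xi^{(J)} := \Xi_1 \circ \cdots \circ \Xi_J$ converge in $(\alpha, s/2)$-Gevrey norm to a diffeomorphism $\Xi$ with $|\Xi - \id|_{\alpha, s/2} \leq c\mu$, and $\omega_j$ converges to a limit $\omega_\infty$ with $|\omega_\infty - \omega_0| \leq c\mu$. To match the statement, I run the iteration on the parametric family $X + \sigma$, $\sigma \in \R^n$ small, so that $\omega_\infty = \omega_\infty(\sigma)$, and solve $\omega_\infty(\sigma^*) = \omega_0$ by an implicit function argument (the derivative in $\sigma$ being close to the identity); setting $\omega_0^* := \omega_0 + \sigma^*$ and $\Xi := \Xi(\sigma^*)$ delivers the conjugation $\Xi^*(X + \omega_0^* - \omega_0) = \omega_0$.

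The delicate step is the Gevrey composition estimate Proposition~\ref{composition}: without arbitrarily small width loss under right composition by near-identity diffeomorphisms, the cumulative width losses would overwhelm the quadratic gain and the iteration would fail to close. The rest is bookkeeping --- balancing $(s_j, Q_j)$ so that $\sum_j(s_j - s_{j+1}) \leq s/2$ while the small-divisor factors $\Psi(Q_j)$ are tamed by the quadratic convergence, which is precisely what the $\alpha$-Bruno-R\"ussmann assumption permits. As stressed in the introduction, the absence of action-angle mixing makes this linearization scheme significantly lighter than its Hamiltonian counterpart Theorem~\ref{KAMparameter}, and a direct implementation along the lines above is the most transparent route.
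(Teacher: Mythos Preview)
The paper's proof is entirely different in structure: it does not iterate on $X$ directly but embeds $X$ into the parameter-dependent Hamiltonian $H_\omega(\theta,I)=\omega\cdot I+B(\theta)\cdot I$, observes that this is of the form~\eqref{HAM} with $\varepsilon=\eta=0$, and invokes Theorem~\ref{KAMparameter} as a black box. Since $\varepsilon=0$ forces $G^*=0$, the resulting symplectic map $\Phi_{\omega_0}^*$ preserves the torus $I=0$, and its restriction $\Xi=\mathrm{Id}+E^*$ is the conjugating diffeomorphism. This buys the paper economy (the whole iteration is already packaged in Theorem~\ref{KAMparameter}) at the price of some indirection; your direct route is viable and, as the introduction itself concedes, simpler in principle.

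That said, your sketch has a gap as written. With a \emph{single} rational vector $\omega^{(j)}$ at step $j$, the modes $0<|k|\le Q_j$ with $k\cdot\omega^{(j)}=0$ form a full sublattice of rank $n-1$ and are left untouched by your $U_j$; they are not ``controlled by $|\omega_j-\omega^{(j)}|\,|B_j|$'' but remain of size $\mu_j$, so nothing is gained. The paper's engine (Proposition~\ref{kamstep}, inside Theorem~\ref{KAMparameter}) uses $n$ rational approximants $v_1,\dots,v_n$ at each step with $q_1v_1,\dots,q_nv_n$ a $\Z$-basis of $\Z^n$ (Proposition~\ref{dio}), so that the $n$ successive periodic averagings yield the full torus average (Proposition~\ref{cordio}). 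Two smaller discrepancies: the convergence there is geometric ($\mu_{i+1}=\mu_i/4$), not quadratic, since the dominant residual is the commutator with $(\omega-v_j)\cdot I$, of order $(Q\sigma^\alpha)^{-1}\mu$ rather than $\mu^2$; and the frequency correction is handled by inverting $\omega\mapsto\omega+[B](\omega)$ at each step (Proposition~\ref{propomega}) instead of by a final implicit-function argument on a parametric family.
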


Observe that because of the shift of frequency $\omega_0^*-\omega_0$,
in general this result does not give any information on the vector
field $X$. Under some further assumption (for instance, if $\omega_0$
belongs to the rotation set of $X$, see~\cite{Kar16}), then this shift
vanishes and Theorem~\ref{KAMvector} implies that $X$ is conjugated to
$\omega_0$.

An even more restricted setting is when $X$ is proportional to
$\omega_0$ (so that the flow of $X$ is a re-parametrization of the
linear flow of frequency $\omega_0$ and thus $\omega_0$ is the unique
rotation vector of $X$); Theorem~\ref{KAMvector} applies in this case
to give a conjugacy to $\omega_0$, assuming that
$\omega_0 \in \mathrm{BR}_\alpha$, but the proof is actually much
simpler in this case (it boils down to solve only once the
cohomological equation) and should require the weaker condition that
$\omega_0$ satisfies~\eqref{coho}, as it is stated in the case
$\alpha=1$ in~\cite{Fay02}. Still in~\cite{Fay02}, it is proved that
for $\alpha=1$ (there are also versions in the $C^r$ case), if
$\omega_0$ satisfies~\eqref{conddestruction}, then there is a dense
set of reparametrized linear flow which are weak-mixing (and so cannot
be conjugated to the linear flow); thus a necessary condition for
Theorem~\ref{KAMvector} to hold true is that $\omega_0$
satisfies~\eqref{coho} (and this is also a sufficient condition if we
impose that $X$ is proportional to $\omega_0$)\footnote{We would like
  to thank B. Fayad for a discussion on this topic.}. Clearly, this
should extend to the general case $\alpha \geq 1$ and thus the
condition that $\omega_0$ does not satisfy~\eqref{conddestruction} is
a necessary condition for Theorem~\ref{KAMvector} to hold true, as in
Theorem~\ref{classicalKAM}.

\subsection{KAM theorem for maps}

In this section, we give the statement of discrete versions of
Theorem~\ref{classicalKAM} and Theorem~\ref{KAMvector}.

Let us start with the discrete analogue of
Theorem~\ref{classicalKAM}. Given a function
$h : \bar{B} \rightarrow \R$, we define the exact-symplectic map
\[ F_h : \T^n \times \bar{B} \rightarrow \T^n \times \bar{B}, \quad
(q,p) \mapsto (q+\nabla h (p),p).  \]
As before, let us fix $\alpha \geq 1$ and $s_0>0$.

\begin{Main}\label{KAMmap1}
  Let $F : \T^n \times \bar{B} \rightarrow \T^n \times \bar{B}$ be an
  $(\alpha,s_0)$-Gevrey exact symplectic map with
  \[ |F-F_h|_{\alpha,s_0} \leq \epsilon. \]
  Assume that $\omega_0=\nabla h(0) \in \mathrm{BR}_\alpha$ and that
  $h$ is non-degenerate. Then there exists $0<s_0'<s_0$ such that for
  $\epsilon$ small enough, there exists an $(\alpha,s_0')$-Gevrey
  torus embedding $\Theta_{\omega_0} : \T^n \rightarrow \T^n \times B$
  such that $\Theta_{\omega_0}(\T^n)$ is invariant by $F$ and
  $\Theta_{\omega_0}$ gives a conjugacy between the translation of
  vector $\omega_0$ on $\T^n$ and the restriction of $F$ to
  $\Theta_{\omega_0}(\T^n)$.  Moreover, $\Theta_{\omega_0}$ is close
  to $\Theta_0$ in the sense that
  \[ |\Theta_{\omega_0}-\Theta_0|_{\alpha,s_0'} \leq
  c\sqrt{\epsilon} \]
  for some constant $c>0$ independent of $\epsilon$.
\end{Main}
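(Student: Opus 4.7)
My plan is to reduce Theorem~\ref{KAMmap1} to Theorem~\ref{timeKAM} via a standard suspension argument, realizing the exact-symplectic map $F$ as the time-$1$ map of a time-periodic Hamiltonian on $\T^n \times \bar{B} \times \T$. The unperturbed $F_h$ is already the time-$1$ map of the autonomous Hamiltonian $h(p)$, so the point is to interpolate between $F_h$ and $F$ through exact-symplectic maps and read off the time-dependent generating Hamiltonian.

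Concretely, I would first use the exact-symplecticness of $F$ and a Gevrey inverse function theorem drawn from Appendix~\ref{app2} to write $F$ via a generating function of the form $S(q,P) = \langle q, P \rangle + h(P) + \epsilon\, \sigma(q, P)$, with $\sigma$ of class $(\alpha, s_0')$-Gevrey and of bounded norm, for some $s_0' < s_0$ arbitrarily close to $s_0$. Linear interpolation $S_t = \langle q, P \rangle + h(P) + t\epsilon\, \sigma(q, P)$, $t \in [0,1]$, then produces an isotopy $(F_t)_{t \in [0,1]}$ of exact-symplectic maps from $F_h$ to $F$. This isotopy is generated by a smooth time-dependent Hamiltonian vector field whose Hamiltonian has the form $\tilde{H}(q, p, t) = h(p) + \epsilon\, \tilde{f}(q, p, t)$, with $\tilde{f}$ computable explicitly from $\sigma$. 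After smoothing the time dependence at the endpoints so that $\tilde{f}$ becomes $1$-periodic in $t$, at the cost of an arbitrarily small further loss of width, we obtain a time-periodic Gevrey Hamiltonian whose time-$1$ map is $F$.

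Having arranged the suspension, I would apply Theorem~\ref{timeKAM} to $\tilde{H}$: since $\omega_0 \in \mathrm{BR}_\alpha$ and $h$ is non-degenerate, the theorem produces, for $\epsilon$ small enough, an $(\alpha, s_0'')$-Gevrey invariant $(n+1)$-torus $\tilde{\Theta}_{\omega_0} : \T^n \times \T \to \T^n \times B \times \T$ quasi-periodic with frequency $(\omega_0, 1)$, satisfying $|\tilde{\Theta}_{\omega_0} - \tilde{\Theta}_0|_{\alpha, s_0''} \leq c\sqrt{\epsilon}$. Restricting this embedding to the slice $\{t = 0\}$ yields an embedded $n$-torus $\Theta_{\omega_0} : \T^n \to \T^n \times B$ invariant under the time-$1$ return map, which by construction is $F$. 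Because the flow on the $(n+1)$-torus is the straight-line flow of frequency $(\omega_0, 1)$, the time-$1$ return to $\{t = 0\}$ is conjugate via $\Theta_{\omega_0}$ to translation by $\omega_0$, as required. The Gevrey closeness estimate for $\Theta_{\omega_0}$ is inherited directly from the one for $\tilde{\Theta}_{\omega_0}$.

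The main obstacle I expect is not the conceptual reduction, which is classical, but ensuring that both the generating-function step and the passage from $S_t$ to $\tilde{H}$ can be carried out in the Gevrey category with only an arbitrarily small loss of width. Extracting $\tilde{H}$ from $S_t$ involves composition with the partially defined inverse of $(q, P) \mapsto (q, p)$, which is close to the identity in the $P$ variable; any composition estimate that forced a fixed loss of width would be fatal here. This is precisely where Proposition~\ref{composition} and the rest of the Gevrey calculus of Appendix~\ref{app2} become indispensable. An alternative self-contained route would be to rerun the iterative proof of Theorem~\ref{KAMparameter} in the discrete category, but the suspension argument is substantially shorter and is tailor-made for the composition estimates the paper develops.
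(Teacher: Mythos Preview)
Your approach is correct and is essentially the same as the paper's: the paper states that Theorem~\ref{KAMmap1} follows at once from Theorem~\ref{isoKAM} (or Theorem~\ref{timeKAM}) once one has a quantitative Gevrey suspension result, and simply cites \cite{KP94} for $\alpha=1$ and \cite{LMS16} for $\alpha>1$ rather than writing out the generating-function interpolation you sketch. One small caveat worth noting in your write-up is that the ``smoothing at the endpoints'' step uses Gevrey cut-offs and is only available for $\alpha>1$; in the analytic case $\alpha=1$ the suspension has to be done differently (as in \cite{KP94}).
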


Theorem~\ref{KAMmap1} follows at once from Theorem~\ref{isoKAM} (or Theorem~\ref{timeKAM}) provided one has a suitable quantitative ``suspension" result; in the analytic case $\alpha=1$ this was proved in~\cite{KP94} and in the Gevrey non-analytic case $\alpha>1$ this is contained in~\cite{LMS16}.

In the same way, we have the following discrete analogue of Theorem~\ref{KAMvector}. Given $\omega_0 \in \R^n$, let $T_{\omega_0}$ be the translation of $\T^n$ of vector $\omega_0$:
\[ T_{\omega_0} : \T^n \rightarrow \T^n, \quad \theta \mapsto \theta+\omega_0. \] 
Let $\alpha \geq 1$ and $s>0$.

\begin{Main}\label{KAMmap2}
Let $\omega_0 \in \mathrm{BR}_\alpha$ and $T \in G_{\alpha,s}(\T^n,\T^n)$ a diffeomorphism of $\T^n$ of the form
\[T = T_{\omega_0} + B, \quad |B|_{\alpha,s} \leq \mu.\]
Then, for $\mu$ sufficiently small, there exist a vector $\omega_0^* \in \R^n$ and an $(\alpha,s/2)$-Gevrey diffeomorphism
  $\Xi : \T^n \to \T^n$ such that $T+\omega_0^*-\omega_0$ is conjugate to $T_{\omega_0}$ via $\Xi$:
\[ \Xi^{-1} \circ (T+\omega_0^*-\omega_0) \circ \Xi=T_{\omega_0}.\] 
Moreover, we have the estimate
\[ |\omega_0^*-\omega_0| \leq c\mu, \quad |\Xi-\mathrm{Id}|_{\alpha,s/2} \leq c\mu \]
for some constant $c\geq 1$ independent of $\mu$.
\end{Main}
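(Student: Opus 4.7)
The plan is to reduce Theorem~\ref{KAMmap2} to Theorem~\ref{KAMvector} via a Gevrey suspension, paralleling how Theorem~\ref{KAMmap1} is derived from Theorem~\ref{isoKAM} using the interpolation results of~\cite{KP94} and~\cite{LMS16}.

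First I would interpolate $T = T_{\omega_0} + B$ by an $(\alpha,s')$-Gevrey vector field $X = \omega_0 + B'$ on $\T^n$ (with $s' < s$) whose time-one flow equals $T$ and with $|B'|_{\alpha,s'} \leq c_1 \mu$. Such a Gevrey ``logarithm'' of $T$ is obtained by a fixed-point argument solving $\phi^X_1 = T$ in the Gevrey class, using the composition estimates of Appendix~\ref{app2} (and in particular Proposition~\ref{composition}). Applying Theorem~\ref{KAMvector} to $X$ then yields $\omega_0^* \in \R^n$ and an $(\alpha,s'/2)$-Gevrey diffeomorphism $\Xi$, with $|\omega_0^* - \omega_0| \leq c_2 \mu$ and $|\Xi - \mathrm{Id}|_{\alpha,s'/2} \leq c_2 \mu$, such that $\Xi^*(X + \omega_0^* - \omega_0) = \omega_0$. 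Taking time-one flows gives $\phi^{X + c}_1 = \Xi \circ T_{\omega_0} \circ \Xi^{-1}$, where $c := \omega_0^* - \omega_0$.

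The hard part is the last step, namely identifying $\phi^{X+c}_1$ with $T + c$: these do not coincide in general, because adding a constant to a non-constant vector field does not commute with taking its time-one flow. To bypass this I would instead use a time-periodic suspension, replacing $X$ by an autonomous vector field $\tilde X(\theta,t) = (X(\theta,t),1)$ on $\T^n \times \T$ whose Poincar\'e first-return map at the section $\{t = 0\}$ is $T$. Applying a structured version of Theorem~\ref{KAMvector} on $\T^{n+1}$ to $\tilde X$, in which the conjugation is constrained to preserve the $\T$-fibration and the frequency shift has vanishing last component, the conjugation descends by restriction to $\{t = 0\}$ to the desired $\Xi$ and $\omega_0^*$. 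This constrained version of Theorem~\ref{KAMvector} follows by inspection from its proof, since the structural constraints are preserved at each KAM step, the cohomological equations decoupling across Fourier modes in the $t$-direction. The arithmetic condition needed here is that the suspended frequency $(\omega_0,1)$ satisfies the $\alpha$-Bruno-R{\"u}ssmann condition on $\R^{n+1}$, which is the natural map-theoretic reformulation of $\omega_0 \in \mathrm{BR}_\alpha$ and handles the discrete small denominators $|e^{2\pi i k \cdot \omega_0} - 1|$ arising in the map-side cohomological equations.
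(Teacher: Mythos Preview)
The paper does not give an explicit proof of Theorem~\ref{KAMmap2}; it only says, just before the statement, ``In the same way, we have the following discrete analogue of Theorem~\ref{KAMvector}'', referring back to the remark that Theorem~\ref{KAMmap1} follows from Theorem~\ref{isoKAM}/\ref{timeKAM} via the Gevrey suspension results of~\cite{KP94} and~\cite{LMS16}. Your proposal follows exactly this implied route: suspend the map to a Gevrey flow and apply the continuous-time linearization theorem.

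You have, moreover, spotted a genuine subtlety that the paper's one-line indication glosses over: if one naively interpolates $T$ by an \emph{autonomous} vector field $X$ on $\T^n$ and applies Theorem~\ref{KAMvector}, the output conjugates the time-one map of $X+c$ (with $c=\omega_0^*-\omega_0$) to $T_{\omega_0}$, and $\phi^{X+c}_1$ is \emph{not} $T+c$ in general. Your fix---passing to the time-periodic suspension $\tilde X=(X(\theta,t),1)$ on $\T^{n+1}$ and using a fibered version of Theorem~\ref{KAMvector}---is the correct way to make the argument go through, and your observation that the structural constraints (last component of the perturbation vanishes, conjugacies preserve the $\T$-fibration, last component of the frequency shift vanishes) are preserved at each KAM step is right: in the proof of Theorem~\ref{KAMparameter} the shift arises as $[B]$ and the generating functions $X_j$ inherit the vanishing last component from $B$. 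You are also right that the arithmetic hypothesis one actually uses is $(\omega_0,1)\in\mathrm{BR}_\alpha$ on $\R^{n+1}$, which governs the discrete small divisors $|e^{2\pi i k\cdot\omega_0}-1|$; this is the natural reading of ``$\omega_0\in\mathrm{BR}_\alpha$'' in the map setting and is implicit in the paper's deduction of Theorem~\ref{KAMmap1} from Theorem~\ref{timeKAM} as well.
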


\section{Statement of the KAM theorem with parameters}
\label{sec:KAMparam}

Let us now consider the following setting. Fix
$\omega_0 \in \R^n \setminus \{0\}$. Re-ordering the components of $\omega_0$ and re-scaling the Hamiltonian allow us to assume without loss of generality that
\[\omega_0=(1,\bar{\omega}_0) \in \R^n, \quad \bar{\omega}_0 \in
[-1,1]^{n-1}.\]
Given real numbers $r>0$ and $h>0$, we let
\[ D_r:=\{I \in \R^n \; | \; |I| \leq r \}, \quad D_h:=\{\omega \in
\R^n \; | \; |\omega-\omega_0| \leq h \}, \quad D_{r,h}:=D_r \times
D_h. \]
Our Hamiltonians will be defined on $\T^n \times D_{r,h}$, a
neighborhood of $\T^n \times \{0\} \times \{\omega_0\}$ in
$\T^n \times \R^n \times \R^n$.

Let $\alpha \geq 1$, $s>0$, $\eta \geq 0$ a fixed parameter,
$\varepsilon \geq 0$ and $\mu \geq 0$ two small parameters. We
consider a function $H \in G_{\alpha,s}(\T^n \times D_{r,h})$ of the
form
\begin{equation}\label{HAM}
\begin{cases}
  H(\theta,I,\omega) = \underbrace{e(\omega)+\omega \cdot
    I}_{N(I,\omega)} +
  \underbrace{A(\theta,\omega)+B(\theta,\omega)\cdot
    I}_{P(\theta,I,\omega)}+ 
  \underbrace{M(\theta,I,\omega) \cdot I^2}_{R(\theta,I,\omega)} \\
|A|_{\alpha,s}\leq \varepsilon, \quad |B|_{\alpha,s} \leq \mu, \quad
  |\nabla_I^2 R|_{\alpha,s} \leq \eta
\end{cases}     
  \tag{$**$}
\end{equation}
where the notation $M(\theta,I,\omega) \cdot I^2$ stands for the vector $I$
given twice as an argument to the symmetric bilinear form
$M(\theta,I,\omega)$. Observe that
$A : \T^n \times D_h \rightarrow \R$,
$B : \T^n \times D_h \rightarrow \R^n$ whereas
$M : \T^n \times D_{r,h} \rightarrow M_n(\R)$ with
$M_n(\R)$ the ring of real square matrices of size $n$. Observe that we do not assume $\varepsilon = \mu$ because these two small parameters play different roles in applications (in
Theorem~\ref{classicalKAM} we will have $\mu = \sqrt{\varepsilon}$ while in
Theorem~\ref{KAMvector}, $\varepsilon=0$ and $\mu$ will be the only small parameter).

The function $H$ in~\eqref{HAM} should be considered as a Gevrey
Hamiltonian on $\T^n \times D_r$, depending on a parameter
$\omega \in D_h$; for a fixed parameter $\omega \in D_h$, when
convenient, we will write
\[ H_\omega (I,\theta)=H(I,\theta,\omega), \quad
N_\omega(I)=N(I,\omega), \quad P_\omega(I,\theta)=P(I,\theta,\omega),
\quad R_\omega(I,\theta)=R(I,\theta,\omega). \]
The image of the map $\Phi_0 : \T^n \rightarrow \T^n \times D_r$,
$\theta \mapsto (\theta,0)$ is a smooth embedded torus in
$\T^n \times D_r$, invariant by the Hamiltonian flow of
$N_{\omega_0}+R_{\omega_0}$ and quasi-periodic with frequency
$\omega_0$. The next theorem asserts that this quasi-periodic torus
will persist, being only slightly deformed, as an invariant torus not
for the Hamiltonian flow of $H_{\omega_0}$ but for the Hamiltonian
flow of $H_{\omega_0^*}$, where $\omega_0^*$ is a parameter close to
$\omega_0$, provided $\varepsilon$ and $\mu$ are sufficiently small
and $\omega_0$ satisfies the $\alpha$-Bruno-R{\"u}ssmann condition. Here
is the precise statement.

\begin{Main}\label{KAMparameter}
  Let $H$ be as in~\eqref{HAM}, with $\omega_0 \in
  \mathrm{BR}_\alpha$.
  Then there exist positive constants $c_1 \leq 1$, $c_2\leq 1$ and
  $c_3 \geq 1$ depending only on $n$ and $\alpha$ such that if
  \begin{equation}\label{cond0}
    \sqrt{\varepsilon} \leq \mu \leq h/2, \quad \sqrt{\varepsilon} \leq r, \quad h \leq c_1 (Q_0\Psi(Q_0))^{-1}
  \end{equation}
  where $Q_0 \geq n+2$ is sufficiently large so that
  \begin{equation}\label{eqQ0}
    Q_0^{-\frac{1}{\alpha}}+(\ln 2)^{-1}\int_{\Delta(Q_0)}^{+\infty}\frac{dx}{x(\Delta^{-1}(x))^{\frac{1}{\alpha}}} \leq c_2(1+\eta)^{-1/\alpha} s,
  \end{equation}
  there exist a vector $\omega_0^* \in \R^n$ and an
  $(\alpha,s/2)$-Gevrey embedding
  \[ \Phi^*_{\omega_0} : \T^n \times D_{r/2} \rightarrow \T^n \times D_r \] 
  of the form
  \[ \Phi^*_{\omega_0}(\theta,I)=(\theta+E^*(\theta),I+F^*(\theta)\cdot I+G^*(\theta)) \]
  with the estimates
  \begin{equation}\label{estim0}
    |\omega_0^*-\omega_0| \leq c_3 \mu, \quad |E^*|_{\alpha,s/2} \leq
    c_3 \Psi(Q_0)\mu, \quad |F^*|_{\alpha,s/2}\leq c_3
    \Delta(Q_0)\mu,   \quad |G^*|_{\alpha,s/2}\leq c_3
    \Delta(Q_0)\varepsilon   
  \end{equation}
  and  such that 
  \[ H_{\omega_0^*} \circ \Phi_{\omega_0}^*(\theta,I)=e_0^*+\omega_0 \cdot I
  + R^*(\theta,I), \quad R^*(\theta,I) = M^*(\theta,I) \cdot I^2,\]
  with the estimates
  \[ |e_0^*-e_{\omega_0^*}| \leq c_3\varepsilon, \quad |\nabla_I^2 R^*-
  \nabla_I^2 R_{\omega_0^*}|_{\alpha,s/2} \leq c_3\eta \Delta(Q_0)\mu.\]
\end{Main}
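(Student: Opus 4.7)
The proof will be a KAM-type iteration that at each step eliminates the $\theta$-dependent pieces $A$ and $B \cdot I$ modulo an error quadratic in $I$, while slightly shifting the parameter $\omega$. To handle the weak arithmetic assumption $\omega_0 \in \mathrm{BR}_\alpha$, rather than inverting the cohomological operator $\partial_{\omega_0}$ directly, I will follow the strategy of~\cite{BF13,BF14}: at the $j$th iterate I approximate $\omega_0$ by a rational vector $v_j$ of large period $T_j$ (so that $T_j v_j \in \Z^n$), and solve only cohomological equations $\partial_{v_j} \chi = g$ associated to the \emph{periodic} vector $v_j$. For these, the resonant modes are the Fourier modes $k \in \Z^n$ with $k \cdot v_j = 0$, and on the complement Fourier division costs divisors bounded below by $2\pi/T_j$; no small-divisor accumulation on $\omega_0$ itself is invoked, only an effective resonant approximation of $\omega_0$ at the scale $T_j$.

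An elementary step runs as follows. Given $H_j = e_j(\omega) + \omega \cdot I + A_j + B_j \cdot I + R_j$ with $(\alpha,s_j)$-Gevrey estimates $|A_j|_{\alpha,s_j} \leq \varepsilon_j$, $|B_j|_{\alpha,s_j} \leq \mu_j$ and $|\nabla_I^2 R_j|_{\alpha,s_j} \leq \eta_j$, I split $A_j$ and $B_j$ into their $v_j$-resonant and $v_j$-non-resonant components and solve by Fourier division
\[
\partial_{v_j} \chi_j^{(0)} = -A_j^{\mathrm{nr}}, \qquad \partial_{v_j} \chi_j^{(1)} = -B_j^{\mathrm{nr}},
\]
then set $\chi_j(\theta,I) := \chi_j^{(0)}(\theta) + \chi_j^{(1)}(\theta)\cdot I$. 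Because $\chi_j$ is affine in $I$, its Hamiltonian time-one flow $\Phi_j$ has exactly the triangular form $(\theta,I) \mapsto (\theta + E_j(\theta),\, I + F_j(\theta)\cdot I + G_j(\theta))$ required in the statement. The zero Fourier mode of $B_j$ is absorbed by a parameter shift $\omega \mapsto \omega + \delta\omega_j$ of size $O(\mu_j)$, the zero mode of $A_j$ is absorbed into the constant $e_j$, and the remaining non-zero $v_j$-resonant modes satisfy $k \cdot \omega_0 \neq 0$ and contribute an error of size $|\omega_0 - v_j|$ times the relevant norm of $A_j$ or $B_j$. The new quadratic-in-$I$ remainder $R_{j+1}$ is then estimated using the Gevrey composition estimate Proposition~\ref{composition} applied to $\Phi_j$.

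The iteration is driven by a geometric scale $Q_j = 2^j Q_0$: the period $T_j$ is chosen comparable to $Q_j$, and a decreasing sequence of widths $s = s_0 > s_1 > \cdots \searrow s/2$ absorbs a loss at step $j$ of order $Q_j^{-1/\alpha}$ (from Fourier truncation at order $Q_j$) plus $\ln \Psi(Q_j) \cdot Q_j^{-(1+1/\alpha)}$ (from the divisors $|k\cdot v_j|^{-1}$ bounded in terms of $\Psi$). Summing these losses along the geometric grid yields, up to constants, the integral $\int_{Q_0}^{+\infty} \ln \Psi(Q) \, Q^{-(1+1/\alpha)} \, dQ$, which is finite precisely because $\omega_0 \in \mathrm{BR}_\alpha$; the quantitative condition~\eqref{eqQ0} is exactly what ensures that the total width loss fits inside $s/2$. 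The smallness parameters close quadratically in Newton fashion, with the relation $\sqrt{\varepsilon} \leq \mu$ in~\eqref{cond0} ensuring that the $\varepsilon$- and $\mu$-type terms decay at compatible rates, and that the $\eta$-dependent feedback in the estimates remains benign. Composing the $\Phi_j$'s and summing the $\delta\omega_j$'s produces in the limit the embedding $\Phi^*_{\omega_0}$ and the shifted frequency $\omega_0^*$ satisfying~\eqref{estim0}.

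The principal obstacle, and the reason for developing the Gevrey functional calculus of Appendix~\ref{app2}, is controlling the composition of infinitely many transformations $\Phi_j$ in Gevrey norm with only a finite total loss of width. A naive composition bound would cost a fixed fraction of the width at each step and would doom the iteration; here Proposition~\ref{composition} is essential, since it permits composition to the right by $\mathrm{Id} + E_j$ with an \emph{arbitrarily small} loss of width whenever $|E_j|$ is itself small. A subsidiary bookkeeping difficulty is that the three small parameters $\varepsilon$, $\mu$ and $\eta$ must be tracked separately through the iteration; rather than use anisotropic Gevrey norms, I will expand $H_j$ as a polynomial of degree two in $I$ plus the quadratic remainder $R_j$, and estimate each coefficient of this expansion independently. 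This is compatible with the structure of the $\Phi_j$'s, which are affine in $I$, and yields the three distinct bounds stated in~\eqref{estim0}.
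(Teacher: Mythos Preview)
Your sketch captures the spirit of the periodic-approximation strategy of~\cite{BF13}, but there is a structural gap in your description of a single KAM step. You propose to approximate $\omega_0$ at the $j$th iterate by \emph{one} rational vector $v_j$ and to solve $\partial_{v_j}\chi_j = -P_j^{\mathrm{nr}}$. After the flow of $\chi_j$, besides the small commutator error coming from replacing $\partial_{\omega}$ by $\partial_{v_j}$ on the non-resonant modes, what remains is the full $v_j$-resonant part $[P_j]_{v_j}$, namely all Fourier modes $k$ with $k\cdot v_j=0$. These modes have not been reduced at all: their amplitude is still of order $\varepsilon_j$ or $\mu_j$. Your assertion that they ``contribute an error of size $|\omega_0-v_j|$'' is unjustified; the fact that $k\cdot\omega_0\neq 0$ for such $k$ does not help unless you invert $\partial_{\omega_0}$ on them, which is precisely what the method is designed to avoid. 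With only one rational per step the iteration does not contract.

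The paper closes this gap by using, within each KAM step, not one but $n$ rational approximations $v_1,\dots,v_n$ chosen so that $q_1v_1,\dots,q_nv_n$ form a $\Z$-basis of $\Z^n$ (Proposition~\ref{dio}). One performs $n$ successive averagings inside the same step, passing from $P_j$ to $P_{j+1}=[P_j]_{v_j}$ via the flow of an explicit $X_j$; after all $n$ sub-steps one reaches $[P]_{v_1,\dots,v_n}$, which by the $\Z$-basis property equals the full torus average $[P]$ (Proposition~\ref{cordio}) and is absorbed into $e$ and the parameter shift. The $|\omega-v_j|$ factor you mention does appear, but as the error in each sub-step from the mismatch $N-N_j=(\omega-v_j)\cdot I$, not as a bound on resonant modes. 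Further discrepancies: no Fourier truncation is used (averaging is via the integral formula~\eqref{moyv}, costing a factor $q_j\leq\Psi(Q)$ with no width loss); the decay is geometric, $\varepsilon_i=16^{-i}\varepsilon$, $\mu_i=4^{-i}\mu$, not quadratic; and the scale satisfies $\Delta(Q_i)=2^i\Delta(Q_0)$ rather than $Q_i=2^iQ_0$, with $\sigma_i\sim Q_i^{-1/\alpha}$ chosen so that $Q_i\sigma_i^\alpha$ is constant. But the missing $\Z$-basis averaging is the essential point without which the scheme does not converge.
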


Theorem~\ref{classicalKAM} follows quite
directly from Theorem~\ref{KAMparameter}, introducing the frequencies
$\omega = \nabla h (p)$ as independent parameters, taking
$\mu=\sqrt{\varepsilon}$, and tuning the shift of frequency
$\omega_0^*-\omega_0$ using the non-degeneracy assumption on the
unperturbed Hamiltonian. Theorems~\ref{KAMvector} follows also from Theorem~\ref{KAMparameter} by realizing $X$ as the restriction of a Hamiltonian vector field on an invariant torus,  setting $\varepsilon=\eta=0$ and letting $\mu$ be the only small parameter. These arguments are made precise in
Section~\ref{sec:proofs}. 

\section{Proofs of Theorems~\ref{classicalKAM} and \ref{KAMvector}, assuming Theorem~\ref{KAMparameter}}
\label{sec:proofs}

\subsection{Proof of Theorem~\ref{classicalKAM}}
\label{sec:proof-classical}

In this section, we assume Theorem~\ref{KAMparameter} and we show how
it implies Theorem~\ref{classicalKAM}, following~\cite{Pos01} (in the
analytic case) and~\cite{Pop04} (in the Gevrey case).

\begin{proof}[Proof of Theorem~\ref{classicalKAM}]
  As noticed at the beginning of Section~\ref{sec:KAMparam}, we may
  assume that $\omega_0$ is of the form
  \[\omega_0=(1,\bar{\omega}_0) \in \R^n, \quad \bar{\omega}_0 \in
  [-1,1]^{n-1}.\]
For $p_0 \in B$, we expand $h$ in a small neighborhood of $p_0$: writing $p=p_0+I$ for $I$ close to zero, we get
\[ h(p)=h(p_0)+ \nabla_p h(p_0)\cdot I + \int_{0}^{1}(1-t)\nabla_p^2
h(p_0 +tI)\cdot I^2 \, dt. \]
Similarly, we expand $\epsilon f$ with respect to $p$, in a small neighborhood of $p_0$:
\[ \epsilon f(q,p)=\epsilon f(q,p_0)+ \epsilon \nabla_p f(q,p_0)\cdot
I + \epsilon\int_{0}^{1}(1-t)\nabla_p^2 f(q,p_0 +tI) \cdot I^2\, dt. \]
Since $\nabla_p h : B \rightarrow \Omega$ is a diffeomorphism, instead
of $p_0$ we can use $\omega=\nabla_p h(p_0)$ as a new variable, and
letting $\nabla_\omega g:=(\nabla h)^{-1}$, we write
\[ h(p)=e(\omega)+ \omega\cdot I + R_h(I,\omega) \] 
with 
\[ e(\omega):=h(\nabla_\omega g(\omega)), \quad
R_h(I,\omega):=\int_{0}^{1}(1-t)\nabla_p^2 h(\nabla_\omega
g(\omega)+tI) \cdot I^2 \, dt\]
and also, letting $\theta=q$, 
\[ \epsilon f(q,p)=\epsilon \tilde{A}(\theta,\omega)+ \epsilon \tilde{B}(\theta,\omega)\cdot I + \epsilon R_f(\theta,I,\omega) \]
with
\[ \tilde{A}(\theta,\omega):=f(\theta,\nabla_\omega g(\omega)), \quad \tilde{B}(\theta,\omega):=\nabla_p f(\theta,\nabla_\omega g(\omega))\] 
and
\[ R_f(\theta,I,\omega):=\epsilon\int_{0}^{1}(1-t)\nabla_p^2
f(\theta,\nabla_\omega g(\omega) +tI) \cdot I^2\, dt.   \]
Finally, we can set
\[ A:=\epsilon \tilde{A}, \quad B:=\epsilon \tilde{B}, \quad
R:=R_h+\epsilon R_f  = M(\theta,I,\omega)\cdot I^2, \] 
so that $h+\epsilon f$ can be written as
\[ H(\theta,I,\omega)=e(\omega)+\omega \cdot
I+A(\theta,\omega)+B(\theta,\omega)\cdot I+R(\theta,I,\omega),\]
and we have
\[  
\nabla_I^2 R(\theta,I,\omega)
=\nabla_I^2 h(\nabla_\omega
  g(\omega)+I)+\epsilon\nabla_I^2 f(\theta,\nabla_\omega g(\omega)+I).
\]
By assumption, $h$ and $f$ are $(\alpha,s_0)$-Gevrey on
$\T^n \times \bar{B}$, and since the space of Gevrey functions is
closed under taking derivatives, products, composition and inversion
(up to restricting the parameter $s_0$, see Appendix~\ref{app2} for
the relevant estimates), we claim that we can find $s>0$, $r>0$,
$h>0$ and $\tilde{c}>0$ which are independent of $\epsilon$ such that $H$ is $(\alpha,s)$-Gevrey on the domain $\T^n \times D_{r,h}$ with the estimates
\[ |A|_{\alpha,s} \leq \tilde{c}\epsilon, \quad |B|_{\alpha,s} \leq
\tilde{c}\epsilon,\quad
|\nabla_I^2 R|_{\alpha,s} \leq \tilde{c}.  \] 
We may set
\[ \varepsilon:=\tilde{c}\epsilon, \quad \mu:=\sqrt{\varepsilon}, \quad \eta:=\tilde{c} \]
and assuming $\epsilon$ small enough, we have $\tilde{c}\epsilon \leq \mu=\sqrt{\varepsilon}$. Thus we have
\[ |A|_{\alpha,s} \leq \varepsilon, \quad |B|_{\alpha,s} \leq \mu, \quad |\nabla_I^2 R|_{\alpha,s} \leq \eta.    \]
Having fixed $s>0$ and $r>0$, we may choose $Q_0$ sufficiently large so that~\eqref{eqQ0} holds true, and then by further restricting first $h$, and then $\epsilon$, we may assume that the condition~\eqref{cond0} is satisfied. Theorem~\ref{KAMparameter} applies: there exist an $(\alpha,s/2)$-Gevrey embedding $\Upsilon_{\omega_0} : \T^n \rightarrow \T^n \times D_r$, defined by 
\[ \Upsilon_{\omega_0}(\theta):=\Phi_{\omega_0}(\theta,0)=(\theta+E^*(\theta),G^*(\theta))\] 
where $\Phi_{\omega_0}$ is given by Theorem~\ref{KAMparameter}, and a vector $\omega_0^* \in \R^n$ such that $\Upsilon_{\omega_0}(\T^n)$ is invariant by the Hamiltonian flow of $H_{\omega_0^*}$ and quasi-periodic with frequency $\omega_0$. Moreover, $\omega_0^*$ and $\Upsilon_{\omega_0}$ satisfy the estimates
\begin{equation*}
|\omega_0^*-\omega_0| \leq c \mu, \quad |E^*|_{\alpha,s/2} \leq c \Psi(Q_0)\mu, \quad |G^*|_{\alpha,s/2}\leq c Q_0\Psi(Q_0)\varepsilon 
\end{equation*}
for some large constant $c>1$. Since $h$ is non-degenerate, there exists $p_0^*$ such that $\nabla h (p_0^*)=\omega_0^*$ and, up to taking $c>1$ larger and recalling that $\mu=\sqrt{\varepsilon}$, the above estimates imply 
\begin{equation}\label{estFFF}
|p_0^*| \leq c \sqrt{\varepsilon}, \quad |E^*|_{\alpha,s/2} \leq c \Psi(Q_0)\sqrt{\varepsilon}, \quad |G^*|_{\alpha,s/2}\leq c Q_0\Psi(Q_0)\varepsilon.  
\end{equation}
Now observe that an orbit $(\theta(t),I(t))$ for the Hamiltonian $H_{\omega_0^*}$ corresponds to an orbit $(q(t),p(t))=(\theta(t),I(t)+p_0^*)$ for our original Hamiltonian. Hence, if we define $T : \T^n \times \R^n \rightarrow \T^n \times \R^n$ by $T(\theta,I)=(\theta,I+p_0^*)$ and 
\[ \Theta_{\omega_0} = T \circ \Upsilon_{\omega_0} : \T^n \rightarrow \T^n
\times \R^n, \quad
\Theta_{\omega_0}(\theta)=(\theta+E^*(\theta),G^*(\theta)+p_0^*) \]
then $\Theta_{\omega_0}$ is an $(\alpha,s/2)$-Gevrey torus embedding
such that $\Theta_{\omega_0}(\T^n)$ is invariant by the Hamiltonian
flow of $H$ and quasi-periodic with frequency $\omega_0$.  The
estimates on the distance between $\Theta_{\omega_0}$ and the trivial
embedding $\Theta_0$ follows directly from~\eqref{estFFF}, which
finishes the proof.
\end{proof}

\subsection{Proof of Theorem~\ref{KAMvector}}

Now we show how Theorem~\ref{KAMvector} follows from Theorem~\ref{KAMparameter}.

\begin{proof}[Proof of Theorem~\ref{classicalKAM}]
Consider the vector field $X=\omega_0+B \in G_{\alpha,s}(\T^n,\R^n)$ as in the statement. It can be trivially included into a parameter-depending vector field: given $h>0$, let $\hat{X} \in G_{\alpha,s}(\T^n \times D_h,\R^n)$ be such that  
\[ \hat{X}(\theta,\omega)=\hat{X}_\omega(\theta)=\omega+B(\theta), \quad \omega \in D_h, \quad \hat{X}_{\omega_0}=X.  \]
Now given any $r>0$, consider the Hamiltonian $H$ defined on $\T^n \times D_{r,h}$ by
\begin{equation}\label{Hamvector}
H(\theta,I,\omega)=H_\omega(\theta,I):=\omega\cdot I +B(\theta)\cdot I. 
\end{equation}
Clearly, for any parameter $\omega$, the torus $\T^n \times \{0\}$ is invariant by the Hamiltonian vector field $X_{H_\omega}$, and, upon identifying $\T^n \times \{0\}$ with $\T^n$, the restriction of $X_{H_\omega}$ to this torus coincides with $\hat{X}_\omega$.  

Now the Hamiltonian $H$ defined in~\eqref{Hamvector} is of the form~\eqref{HAM} with $\varepsilon=\eta=0$ (and $e=0$) and therefore for $\mu$ sufficiently small, Theorem~\ref{KAMparameter} applies: there exist a vector $\omega_0^* \in \R^n$ and an $(\alpha,s/2)$-Gevrey embedding
\[ \Phi^*_{\omega_0} : \T^n \times D_{r/2} \rightarrow \T^n \times D_r \] 
here of the form
\[ \Phi^*_{\omega_0}(\theta,I)=(\theta+E^*(\theta),I+F^*(\theta)\cdot I) \]
with the estimates
  \begin{equation*}
    |\omega_0^*-\omega_0| \leq c_3 \mu, \quad |E^*|_{\alpha,s/2} \leq
    c_3 \Psi(Q_0)\mu, \quad |F^*|_{\alpha,s/2}\leq c_3
    \Delta(Q_0)\mu   
  \end{equation*}
  and  such that 
\begin{equation}\label{relH}
H_{\omega_0^*} \circ \Phi_{\omega_0}^*(\theta,I)=\omega_0 \cdot I.
\end{equation}
The embedding $\Phi^*_{\omega_0}$ clearly leaves invariant the torus $\T^n \times \{0\}$ and induces a diffeomorphism of this torus that can be identified to $\Xi:=\mathrm{Id}+E^*$. Writing the equality~\eqref{relH} in terms of Hamiltonian vector fields, we have, upon restriction to the invariant torus and recalling that the restriction of $X_{H_\omega}$ coincides with $\hat{X}_\omega$, 
\[ \Xi^*(\hat{X}_{\omega_0^*})=\omega_0. \]    
But $\hat{X}_{\omega_0^*}=\hat{X}_{\omega_0}+\omega_0^*-\omega_0=X+\omega_0^*-\omega_0$
and therefore
\[ \Xi^*(X+\omega_0^*-\omega_0)=\omega_0 \]
which, together with the estimates on $\omega_0^*$ and $\Xi-\mathrm{Id}=E^*$, was the statement we wanted to prove.    
\end{proof}

\section{Proof of Theorem~\ref{KAMparameter}}
\label{sec:proof}

This section is devoted to the proof of Theorem~\ref{KAMparameter}, in
which we will construct, by an iterative procedure, a vector
$\omega_0^*$ close to $\omega_0$ and a Gevrey-smooth torus embedding
$\Phi_{\omega_0}^*$ whose image is invariant by the Hamiltonian flow of
$H_{\omega_0^*}$.  We start, in \S\ref{sec:approx}, by recalling the
Diophantine result of~\cite{BF13} which will be used in our
approach. Then, in \S\ref{sec:step}, we describe an elementary step of
our iterative procedure, and finally, in \S\ref{sec:iteration}, we
will show that infinitely many steps may be carried out, to converge
towards a solution.

In this paper, we do not pay attention to how constants depend on the
dimension $n$ or the Gevrey-exponent $\alpha$, both being fixed. Hence
in this section, we shall write
$$u \MP v \quad (\mbox{respectively } u \PM v)$$
if, for some constant $C\geq 1$ depending only on $n$ and $\alpha$, we
have $u\leq Cv$ (respectively $Cu \leq v$). In particular, $u \PM v$ is
stronger than $u \MP v$.

\subsection{Approximation by rational vectors}\label{sec:approx}

Recall that we have written $\omega_0=(1,\bar{\omega}_0) \in \R^n$
with $\bar{\omega}_0 \in [-1,1]^{n-1}$. For a given $Q\geq 1$, it is
always possible to find a rational vector $v=(1,p/q) \in \Q^n$, with
$p \in \Z^{n-1}$ and $q \in \N$, which is a $Q$-approximation in the
sense that $|q\omega_0 - qv|\leq Q^{-1}$, and for which the
denominator $q$ satisfies the upper bound $q \leq Q^{n-1}$: this is
essentially the content of Dirichlet's theorem on simultaneous
rational approximations, and it holds true without any assumption on
$\omega_0$. In our situation, since we have assumed that $\omega_0$ is
non-resonant, there exist not only one, but $n$ linearly independent
rational vectors in $\Q^n$ which are $Q$-approximations. Moreover, one
can obtain not only linearly independent vectors, but rational vectors
$v_1,\dots,v_n$ of denominators $q_1, \dots,q_n$ such that the
associated integer vectors $q_1v_1,\dots,q_nv_n$ form a $\Z$-basis of
$\Z^n$. However, the upper bound on the corresponding denominators
$q_1, \dots, q_n$ is necessarily larger than $Q^{n-1}$, and is given
by a function of $Q$ that we can call here $\Psi_{\omega_0}'$
(see \cite{BF13} for more precise and general information, but note
that in this reference, $\Psi_{\omega_0}'$ was denoted by $\Psi_{\omega_0}$ and $\Psi_{\omega_0}$,
which we defined in~\eqref{eqpsi}, was denoted by $\Psi_{\omega_0}'$). A consequence of the main Diophantine result of \cite{BF13} is that this function $\Psi_{\omega_0}'$ is in fact essentially equivalent to the function $\Psi_{\omega_0}$.  

\begin{proposition}\label{dio}
Let $\omega_0=(1,\bar{\omega}_0) \in \R^n$ be a non-resonant vector, with $\bar{\omega}_0 \in [-1,1]^{n-1}$. For any $Q\geq n+2$, there exist $n$ rational vectors $v_1, \dots, v_n$, of denominators $q_1, \dots, q_n$, such that $q_1v_1, \dots, q_nv_n$ form a $\Z$-basis of $\Z^n$ and for $j\in\{1,\dots,n\}$,
\[ |\omega_0-v_j|\MP(q_j Q)^{-1}, \quad 1 \leq q_j \MP \Psi(Q).\]
\end{proposition}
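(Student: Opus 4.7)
The proposition is essentially a direct translation, into the paper's notation, of the main Diophantine theorem of~\cite{BF13}, and my plan is to explain how to extract it. First, by the very definition of the function $\Psi_{\omega_0}'$ introduced in~\cite{BF13}, for each $Q \geq n+2$ there exist rational vectors $v_1,\dots,v_n$ with denominators $q_1,\dots,q_n$ such that $(q_jv_j)_{j=1}^n$ is a $\Z$-basis of $\Z^n$, the approximation estimate $|\omega_0-v_j| \MP (q_jQ)^{-1}$ holds, and $q_j \MP \Psi_{\omega_0}'(Q)$. This already yields the basis property and the approximation bound in the form stated; what remains is to replace the bound $\Psi_{\omega_0}'(Q)$ by $\Psi(Q)$.

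The substantial step is therefore the equivalence $\Psi_{\omega_0}'(Q) \MP \Psi_{\omega_0}(Q)$ established in~\cite{BF13}. The easy direction is the converse inequality $\Psi_{\omega_0}(Q) \MP \Psi_{\omega_0}'(Q)$: any $\Z$-basis $(q_jv_j)$ with the required approximation property produces, through the dual basis of $\Z^n$, a non-zero integer vector $k$ of norm $\MP Q$ with $|k\cdot\omega_0|$ controlled from below by a constant multiple of $(Q \max_j q_j)^{-1}$, which bounds $\Psi_{\omega_0}(Q)$ from above. The nontrivial direction, and the real content of~\cite{BF13}, goes the other way: starting only from the small-denominator data encoded in $\Psi_{\omega_0}(Q)$, one constructs the desired $\Z$-basis via a geometry-of-numbers argument, applying Minkowski's theorem on successive minima to an auxiliary lattice in $\R^n$ built from $\omega_0$ and $Q$, with all constants depending only on $n$.

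This is the main obstacle in the proof, but it is imported as a black box from~\cite{BF13} rather than reproved here. To conclude, I combine the definitional estimate $q_j \MP \Psi_{\omega_0}'(Q)$ with the comparison $\Psi_{\omega_0}'(Q) \MP \Psi_{\omega_0}(Q)$ and the continuous-envelope inequality $\Psi_{\omega_0}(Q) \leq \Psi(Q)$ from~\eqref{foncpsi}; together these give $q_j \MP \Psi(Q)$. The lower bound $q_j \geq 1$ is automatic since the $q_j$ are positive integers, and the approximation estimate $|\omega_0-v_j| \MP (q_jQ)^{-1}$ is inherited directly from the construction. All implicit constants depend only on $n$, as required.
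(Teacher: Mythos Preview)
Your proposal is correct and matches the paper's own justification essentially verbatim: the paper simply refers to~\cite{BF13}, Theorem~2.1 and Proposition~2.3, for the statement with $\Psi_{\omega_0}$ in place of $\Psi$, and then invokes~\eqref{foncpsi} to pass from $\Psi_{\omega_0}$ to $\Psi$. Your additional commentary on the geometry-of-numbers content behind~\cite{BF13} is extra context rather than a different argument; note however that your sketch of the ``easy direction'' $\Psi_{\omega_0} \MP \Psi_{\omega_0}'$ is a bit loose (producing a single $k$ with $|k\cdot\omega_0|$ bounded below does not by itself bound the maximum defining $\Psi_{\omega_0}$), but since you correctly treat this equivalence as a black box from~\cite{BF13}, this does not affect the validity of the proof.
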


For a proof of the above proposition with $\Psi_{\omega_0}$ instead of $\Psi$, we refer to \cite{BF13}, Theorem 2.1 and Proposition $2.3$; now by~\eqref{foncpsi}, $\Psi_{\omega_0} \leq \Psi$ and so one may replace $\Psi_{\omega_0}$ by $\Psi$. 

Now given a $q$-rational vector $v$ and a smooth function $H$ defined on $\T^n \times D_{r,h}$, we define
\begin{equation}\label{moyv}
[H]_v(\theta,I,\omega)= \int^{1}_{0} H(\theta+tqv,I,\omega)dt.
\end{equation}
Given $n$ rational vectors $v_1,\dots,v_n$, we let $[H]_{v_1,\dots,v_d}=[\cdots[H]_{v_1}\cdots]_{v_d}$. Finally we define
\begin{equation}\label{moy}
[H](I,\omega)=\int_{\T^n}H(\theta,I,\omega)d\theta.
\end{equation}
The following proposition is a consequence of the fact that the vectors $q_1v_1, \dots, q_nv_n$ form a $\Z$-basis of $\Z^n$. 

\begin{proposition}[{\cite[Corollary 6]{Bou13}}]\label{cordio}
Let $v_1,\dots,v_n$ be rational vectors, of denominators $q_1,\dots,q_n$, such that $q_1v_1, \dots, q_nv_n$ form a $\Z$-basis of $\Z^n$, and $H$ a function defined on $\T^n \times D_{r,h}$. Then
\[ [H]_{v_1,\dots,v_n}=[H]. \]  
\end{proposition}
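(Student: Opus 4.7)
The plan is to prove the identity by passing to the Fourier side in $\theta$. Since $H(\,\cdot\,,I,\omega)$ is smooth on $\T^n$ (the parameters $I,\omega$ play no role in the statement), I can expand
\[ H(\theta,I,\omega) = \sum_{k \in \Z^n} \hat{H}_k(I,\omega)\, e^{2\pi i k\cdot\theta}, \]
and it is enough to compare Fourier coefficients on both sides of the claimed equality.

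First I would analyze the effect of a single operator $[\,\cdot\,]_v$ on a Fourier mode. For a rational vector $v$ with denominator $q$, the vector $qv$ lies in $\Z^n$, hence $k\cdot (qv)\in\Z$ for every $k\in\Z^n$. Substituting the Fourier series in the definition~\eqref{moyv} and interchanging sum and integral (justified by smoothness), the coefficient of $e^{2\pi i k\cdot\theta}$ in $[H]_v$ is $\hat{H}_k(I,\omega)\int_0^1 e^{2\pi i t\, k\cdot qv}\,dt$. Because $k\cdot qv$ is an integer, this integral equals $1$ if $k\cdot qv=0$ and $0$ otherwise. Thus $[\,\cdot\,]_v$ acts as the orthogonal projector on the Fourier modes $k$ satisfying $k\cdot(qv)=0$.

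Iterating this observation, $[H]_{v_1,\dots,v_n}$ retains precisely those Fourier coefficients $\hat H_k(I,\omega)$ for which $k\cdot(q_jv_j)=0$ for every $j\in\{1,\dots,n\}$. The hypothesis that $q_1 v_1,\dots,q_n v_n$ is a $\Z$-basis of $\Z^n$ implies in particular that these $n$ vectors are $\R$-linearly independent, hence span $\R^n$; the only $k\in\Z^n$ orthogonal to all of them is therefore $k=0$. Consequently only the mean $\hat{H}_0(I,\omega)$ survives, and by~\eqref{moy} this is exactly $[H](I,\omega)$, proving the identity.

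No step is really an obstacle here; the only mild subtlety is justifying the termwise manipulation of the Fourier series, which is immediate since $H$ is smooth and both sides are continuous functions of $(\theta,I,\omega)$. The crucial algebraic ingredient is the $\Z$-basis property, without which one would only conclude that $[H]_{v_1,\dots,v_n}$ equals the average of $H$ over the subtorus generated by $q_1 v_1,\dots,q_n v_n$ rather than over the full torus $\T^n$.
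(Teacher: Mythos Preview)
Your argument is correct. The paper does not supply its own proof of this proposition but cites \cite[Corollary~6]{Bou13}; your Fourier-coefficient computation is the natural direct argument and is presumably what lies behind that reference. One small remark: as your own phrasing hints, the proof only uses that $q_1v_1,\dots,q_nv_n$ span $\R^n$ (equivalently, are $\R$-linearly independent), so the full $\Z$-basis hypothesis is stronger than what is actually needed for this particular identity.
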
 

\subsection{KAM step}\label{sec:step}

Now we describe an elementary step of our iterative procedure. Such a
step consists in pulling back the Hamiltonian $H$ by a transformation
of the form
\[ \mathcal{F}=(\Phi,\varphi): (\theta,I,\omega) \mapsto
(\Phi(\theta,I,\omega),\varphi(\omega));\]
$\Phi$ is a parameter-depending change of coordinates and $\varphi$ a
change of parameters. Moreover, our change of coordinates will be of
the form
\[\Phi(\theta,I,\omega)=
\Phi_\omega(\theta,I)=(\theta+E(\theta,\omega),I+F(\theta,\omega)\cdot
I+G(\theta,\omega)) \] with
\[ E : \T^n \times D_h \rightarrow \R^n, \quad F : \T^n \times D_h
\rightarrow M_n(\R), \quad G : \T^n \times D_h \rightarrow \R^n \]
and for each fixed parameter $\omega$, $\Phi_\omega$ will be
symplectic. For simplicity, we shall write $\Phi=(E,F,G)$; the
composition of such transformations
$\mathcal{F}=(\Phi,\varphi)=(E,F,G,\varphi)$ is again a transformation
of the same form, and we shall denote by $\mathcal{G}$ the groupoid of
such transformations.

\begin{proposition}\label{kamstep}
  Let $H$ be as in~\eqref{HAM}, with
  $\omega_0=(1,\bar{\omega}_0) \in \R^n$ non-resonant, consider
  $0<\sigma< s$, $0<\delta<r$, $Q \geq n+2$, and assume that
  \begin{equation}\label{cond1}
    \sqrt{\varepsilon} \leq \mu \leq h/2, \quad
    \sqrt{\varepsilon} \leq r, \quad h \PM (Q\Psi(Q))^{-1}, \quad r\mu \PM \delta(Q\Psi(Q))^{-1},
    \quad (1+\eta) \PM Q\sigma^{\alpha}. 
  \end{equation}
  Then there exists an $(\alpha,s-\sigma)$-Gevrey symplectic
  transformation
  \[ \mathcal{F}=(\Phi,\varphi)=(E,F,G,\varphi) : \T^n \times
  D_{r-\delta,h/2} \rightarrow \T^n \times D_{r,h} \in \mathcal{G}, \]
  with the estimates
  \begin{equation}\label{stepest1}
    \begin{cases}
|E|_{\alpha,s-\sigma} \MP \Psi(Q)\mu, \quad |\nabla E|_{\alpha,s-\sigma} \MP \sigma^{-\alpha}\Psi(Q)\mu, \\
|F|_{\alpha,s-\sigma} \MP \sigma^{-\alpha}\Psi(Q)\mu, \quad |\nabla F|_{\alpha,s-\sigma} \MP \sigma^{-2\alpha}\Psi(Q)\mu, \\
|G|_{\alpha,s-\sigma} \MP \sigma^{-\alpha}\Psi(Q)\varepsilon, \quad |\nabla G|_{\alpha,s-\sigma} \MP \sigma^{-2\alpha}\Psi(Q)\varepsilon,   \\
|\varphi-\mathrm{Id}|_{\alpha,s-\sigma} \leq \mu, \quad |\nabla \varphi-\mathrm{Id}|_{\alpha,s-\sigma} \MP \sigma^{-\alpha} \mu
\end{cases}
  \end{equation}
  such that 
  \[H \circ \mathcal{F}(\theta,I,\omega) =
  \underbrace{e^+(\omega)+\omega\cdot I }_{N^+(I,\omega)} +
  \underbrace{A^+(\theta)+B^+(\theta)\cdot I}_{P^+(\theta,I,\omega)} +
  \underbrace{M^+(\theta,I,\omega)\cdot I^2}_{R^+(\theta,I,\omega)}, 
  \]
  with the estimates
  \begin{equation}\label{stepest2}
    \begin{cases}
      |A^+|_{\alpha,s-\sigma} \leq \varepsilon/16, \quad
      |B^+|_{\alpha,s-\sigma} \leq \mu/4, \\
      |e^+-e \circ \varphi|_{\alpha,s-\sigma} \leq |A|_{\alpha,s},
      \quad |\nabla_I^2 R^+- \nabla_I^2 R \circ
      \mathcal{F}|_{\alpha,s-\sigma}\MP \eta |F|_{\alpha,s-\sigma}.
    \end{cases}
  \end{equation}
\end{proposition}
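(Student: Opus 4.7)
The plan is to construct $\mathcal{F}$ as the composition of $n$ successive symplectic coordinate changes $\Phi^{(1)},\dots,\Phi^{(n)}$, followed by a single change of parameters $\varphi$. Following the strategy of~\cite{BF13,BF14}, rather than solving the cohomological equation associated with $\omega_0$ itself (which would demand a stronger arithmetic condition), we use the $n$ rational vectors $v_1,\dots,v_n$ provided by Proposition~\ref{dio}, with denominators $q_i \MP \Psi(Q)$. The transformation $\Phi^{(i)}$ will kill all Fourier modes of the current perturbation that are not invariant under translation by $q_iv_i$; Proposition~\ref{cordio} then ensures that after the $n$ sub-steps only the full toroidal averages $[A](\omega)$ and $[B](\omega)\cdot I$ survive from the original perturbation $P$.

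Each elementary step is generated, at fixed $\omega$, by a function of the form
\[ S_i(\theta, I^+) = \theta \cdot I^+ + \chi_i(\theta,\omega) + \psi_i(\theta,\omega)\cdot I^+, \]
which produces a symplectic map of exactly the form $(E^{(i)},F^{(i)},G^{(i)})$ required in the statement, with $E^{(i)}=\psi_i$ and with $F^{(i)}$ and $G^{(i)}$ involving an extra $\theta$-derivative of $\psi_i$ and $\chi_i$ respectively (this explains the additional $\sigma^{-\alpha}$ factors in~\eqref{stepest1}, via the Gevrey Cauchy estimates of Appendix~\ref{app2}). The scalar $\chi_i$ and vector $\psi_i$ are chosen to solve the cohomological equations
\[ q_i v_i \cdot \partial_\theta \chi_i = A_i - [A_i]_{v_i}, \qquad q_i v_i \cdot \partial_\theta \psi_i = B_i - [B_i]_{v_i}, \]
where $A_i,B_i$ denote the linear and affine-in-$I$ parts of the Hamiltonian at step $i$. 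Since $v_i$ is rational of period $q_i$, these are solved explicitly by integration along the periodic flow $t \mapsto \theta + t q_i v_i$, producing Gevrey primitives with $|\chi_i|\MP q_i\varepsilon \MP \Psi(Q)\varepsilon$ and $|\psi_i|\MP q_i\mu \MP \Psi(Q)\mu$ without any loss of Gevrey width. The mismatch $|\omega_0 - v_i|\MP (q_iQ)^{-1}$ generates correction terms which, thanks to the smallness conditions $h \MP (Q\Psi(Q))^{-1}$ and $r\mu \MP \delta (Q\Psi(Q))^{-1}$ in~\eqref{cond1}, are either absorbed into the next perturbation $A_{i+1},B_{i+1}$ or pushed into the quadratic remainder $R^+$.

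After the $n$ sub-steps, the reduced Hamiltonian reads $e(\omega)+\omega\cdot I+[A](\omega)+[B](\omega)\cdot I+R'$. The parameter shift $\varphi$ is then defined by the implicit equation $\varphi(\omega)+[B](\varphi(\omega))=\omega$, solvable by a direct fixed point argument since $|[B]|\leq\mu\leq h/2$, so as to absorb $[B](\omega)\cdot I$ into $\omega\cdot I$ while $[A](\omega)$ is swept into the new energy $e^+(\omega)$. The new perturbations $A^+,B^+$ collect only quadratic-in-$(\varepsilon,\mu)$ contributions, arising from the second-order expansion of $R$ around the shifted action (whence the roles of $\eta$ and the estimates on $\nabla_I^2 R^+ - \nabla_I^2 R\circ\mathcal{F}$, which follow from the linear-in-$I$ component $F$) and from the symplectic corrections to each $\Phi^{(i)}$; the smallness conditions~\eqref{cond1} translate directly into the numerical reduction factors $1/16$ and $1/4$. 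The main technical obstacle lies in tracking the Gevrey norms along the composition of the $n$ transformations: each $\Phi^{(i)}$ is only close to the identity, so at every sub-step one must invoke Proposition~\ref{composition}, spending only a fraction of order $\sigma/n$ of the allowed width budget; the derivative-smallness condition $(1+\eta)\MP Q\sigma^\alpha$ in~\eqref{cond1} is precisely what guarantees that the Jacobians of the $\Phi^{(i)}$ stay uniformly close to the identity so that the composition estimates can be iterated $n$ times without blow-up.
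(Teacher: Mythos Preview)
Your overall architecture matches the paper's: $n$ successive averagings along the rational vectors $v_1,\dots,v_n$ of Proposition~\ref{dio}, followed by the frequency shift $\varphi$ solving $\varphi(\omega)+[B](\varphi(\omega))=\omega$. Two points deserve comment.

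First, a difference of mechanism: the paper does \emph{not} use generating functions. Each sub-step is the time-$1$ map of the Hamiltonian flow of $X_j(\theta,I)=C_j(\theta)+D_j(\theta)\cdot I$, where $C_j,D_j$ are the explicit periodic-flow primitives (your $\chi_i,\psi_i$). Proposition~\ref{flotX} then gives directly $X_j^1(\theta,I)=(\theta+E_j(\theta),I+F_j(\theta)\cdot I+G_j(\theta))$ with the stated bounds. Your generating-function route is legitimate, but note that with $S_i(\theta,I^+)$ the relation $\theta^+=\theta+\psi_i(\theta)$ must be inverted to put $\Phi^{(i)}$ in the form required (so $E^{(i)}\neq\psi_i$ exactly), which costs an extra fixed-point argument in the Gevrey class; the flow approach avoids this.

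Second, an inaccuracy in the mechanism of reduction. You assert that $A^+,B^+$ ``collect only quadratic-in-$(\varepsilon,\mu)$ contributions''. This is not so: the dominant term in $A^+$ (resp.\ $B^+$) comes from the Poisson bracket $\{S_j,X_j\}$ with $S_j=(\omega-v_j)\cdot I$, which is \emph{linear} in $\varepsilon$ (resp.\ $\mu$) and of size $\MP (Q\sigma^\alpha)^{-1}\varepsilon$ (resp.\ $(Q\sigma^\alpha)^{-1}\mu$) thanks to $|\omega-v_j|\MP(q_jQ)^{-1}$. The genuinely quadratic terms $\{P_j,X_j\}$ happen to satisfy the same bound once one uses $\Psi(Q)\mu\PM Q^{-1}$, so the final estimates are unaffected; but the reduction factors $1/16$ and $1/4$ come from choosing the implicit constant in $(1+\eta)\PM Q\sigma^\alpha$ large, not from a quadratic gain. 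Correspondingly, the primary role of that last condition in~\eqref{cond1} is to force $(1+\eta)(Q\sigma^\alpha)^{-1}$ below the threshold, rather than (as you write) to control the Jacobians of the $\Phi^{(i)}$---the latter is controlled by $\Psi(Q)\mu\PM\sigma^\alpha$, which follows from $\mu\leq h/2$ and $h\PM(Q\Psi(Q))^{-1}$.
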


\begin{proof}
  We divide the proof of the KAM step into five small steps. Except
  for the last one, the parameter $\omega \in D_h$ will be fixed, so
  for simplicity, in the first four steps we will drop the dependence
  on the parameter $\omega \in D_h$. Let us first notice
  that~\eqref{cond1} clearly implies the following seven inequalities:
  \begin{align}
    &h \PM (Q\Psi(Q))^{-1} \label{eq:cond1}\\ 
    &\Psi(Q)\mu \PM \sigma^{\alpha} \label{eq:cond2}\\ 
    &\varepsilon \leq r\mu \label{eq:cond3}\\
    &r\mu\sigma^{-\alpha}\Psi(Q) \PM \delta  \label{eq:cond4}\\ 
    &\mu \PM (Q\Psi(Q))^{-1} \label{eq:cond5}\\ 
    &(1+\eta) \PM Q\sigma^{\alpha} \label{eq:cond6}\\ 
    &\mu \leq h/2. \label{eq:cond7}
  \end{align}
It is also important to notice that the implicit constant appearing in~\eqref{eq:cond6} is independent of the other implicit constants; we may choose it as large as we want without affecting the other implicit constants. In the first three steps, the term $R$ which contains terms of order at least $2$ in $I$ will be ignored, that is we will only consider $\hat{H}=H-R=N+P$.

\medskip

\textit{1. Rational approximations of $\omega_0$ and $\omega \in D_h$}

\medskip

Since $\omega_0$ is non-resonant, given $Q \geq n+2$, we can apply Proposition~\ref{dio}: there exist $n$ rational vectors $v_1, \dots, v_n$, of denominators $q_1, \dots, q_n$, such that $q_1v_1, \dots, q_nv_n$ form a $\Z$-basis of $\Z^n$ and for $j\in\{1,\dots,n\}$,
\[ |\omega_0-v_j|\MP(q_j Q)^{-1}, \quad 1 \leq q_j \MP \Psi(Q).\]
For any $\omega \in D_h$, using~\eqref{eq:cond1} and
$q_j \MP \Psi(Q)$, we have
\begin{equation}\label{shift}
|\omega-v_j| \leq |\omega-\omega_0|+|\omega_0-v_j| \MP h + (q_j Q)^{-1} \MP (Q\Psi(Q))^{-1} + (q_j Q)^{-1} \MP (q_j Q)^{-1}.
\end{equation}

\medskip

\textit{2. Successive rational averagings}

\medskip

Let us set $A_1:=A$, $B_1:=B$ so that
$P_1(\theta,I):=A_1(\theta)+B_1(\theta) \cdot I$ satisfies
$P_1=P$. Recalling that $[\,.\,]_v$ denotes the averaging along the
periodic flow associated to a periodic vector $v \in \R^n$
(see~\eqref{moyv}), we define inductively, for $1 \leq j \leq n$,
\[ A_{j+1}:=[A_j]_{v_j}, \quad B_{j+1}:=[B_j]_{v_j}, \quad
P_{j+1}:=[P_j]_{v_j} \]
so in particular $P_j(\theta,I)=A_j(\theta)+B_j(\theta)\cdot I$ for
$1 \leq j \leq n$. Let us also define $X_j$, for $1 \leq j \leq n$, by
\[ X_j(\theta,I):=C_j(\theta)+D_j(\theta)\cdot I \]
where
\[ C_j(\theta)=q_j \int_0^1 (A_j-A_{j+1})(\theta+tq_jv_j)tdt, \quad
D_j(\theta)=q_j \int_0^1 (B_j-B_{j+1})(\theta+tq_jv_j)tdt. \] 
If we further define $N_j$ by $N_j(I)=e(\omega)+v_j \cdot I$, it is
then easy to check, by a simple integration by parts, that the
equations 
\begin{equation}\label{eqhomo}
\{C_j,N_j\}=A_j-A_{j+1}, \quad \{D_j,N_j\}=B_j-B_{j+1},  \quad 1 \leq
j \leq n, 
\end{equation}
and then
\begin{equation}\label{eqhomo2}
\{X_j,N_j\}=P_j-P_{j+1}, \quad 1 \leq j \leq n,
\end{equation}
are satisfied, where $\{\,.\,,\,.\,\}$ denotes the usual Poisson
bracket. Moreover, we have the estimates
\begin{equation}\label{Estim0}
|A_j|_{\alpha,s} \leq |A|_{\alpha,s} \leq \varepsilon, \quad
|B_j|_{\alpha,s} \leq |B|_{\alpha,s} \leq \mu,  
\end{equation}
and then
\begin{equation}\label{Estim00}
|C_j|_{\alpha,s} \leq q_j|A_j|_{\alpha,s} \leq q_j\varepsilon \MP \Psi(Q)\varepsilon, \quad |D_j|_{\alpha,s} \leq q_j|B_j|_{\alpha,s} \leq q_j \mu \MP \Psi(Q)\mu. 
\end{equation}
Next, for any $0 \leq j \leq n$, define $r_j=r-n^{-1}j\delta$ and
$s_j=s-(2n)^{-1}j\sigma$. We have $r_n=r-\delta \leq r_j \leq r_0=r$
while $s_n=s-\sigma/2 \leq s_j \leq s_0=s$. Let $X_j^t$ be the
time-$t$ map of the Hamiltonian flow of $X_j$. Using~\eqref{Estim00},
together with inequalities~\eqref{eq:cond2}, \eqref{eq:cond3} and
\eqref{eq:cond4}, the condition~\eqref{smallX} and~\eqref{smallXX} of
Proposition~\ref{flotX}, Appendix~\ref{app2}, are satisfied, so the
latter proposition can be applied: for $1 \leq j \leq n$, $X_j^t$ maps
$\T^n \times B_{r_{j}}$ into $\T^n \times B_{r_{j-1}}$ for all
$t \in [0,1]$ and it is of the form
\[ X_j^t(\theta,I)=(\theta+E_j^t(\theta),I+F_j^t(\theta)\cdot
I+G_j^t(\theta)) \] 
with
\begin{equation}\label{estflot1}
\begin{cases}
|E_j^t|_{\alpha,s_j} \leq |D_j|_{\alpha,s_{j-1}} \MP \Psi(Q)\mu \\  
|F_j^t|_{\alpha,s_j} \MP \sigma^{-\alpha}|D_j|_{\alpha,s_{j-1}} \MP
\sigma^{-\alpha}\Psi(Q)\mu \\ 
|G_j^t|_{\alpha,s_j} \MP \sigma^{-\alpha}|C_j|_{\alpha,s_{j-1}} \MP
\sigma^{-\alpha}\Psi(Q)\varepsilon.
\end{cases}
\end{equation} 
Now we define $\Phi^0:=\mathrm{Id}$ to be the identity and inductively
$\Phi^{j}:=\Phi^{j-1} \circ X_{j}^1$ for $1 \leq j \leq n$. Then
$\Phi^{j}$ maps $\T^n \times B_{r_{j}}$ into $\T^n \times B_{r}$ and
one easily checks, by induction using the estimates~\eqref{estflot1},
that $\Phi^{j}$ is still of the form
\[ \Phi^j(\theta,I)=(\theta+E^j(\theta),I+F^j(\theta)\cdot I+G^j(\theta)) \]
with the estimates, for $j=1,...,n$,
\begin{equation}\label{estflot2}
  |E^j|_{\alpha,s_j}  \MP \Psi(Q)\mu, \quad |F^j|_{\alpha,s_j} \MP
  \sigma^{-\alpha}\Psi(Q)\mu, \quad |G^j|_{\alpha,s_j} \MP
  \sigma^{-\alpha}\Psi(Q)\varepsilon. 
\end{equation} 

\medskip

\textit{3. New Hamiltonian}

\medskip

Let us come back to the Hamiltonian $\hat{H}=H-R=N+P=N+P_1$. We claim
that for all $0 \leq j \leq n$, we have
\[ \hat{H} \circ \Phi^j= N+P_{j+1}+P_{j+1}^+, \quad P_{j+1}^+(\theta,I)=A_{j+1}^+(\theta)+B_{j+1}^+(\theta)\cdot I \] 
with the estimates
\begin{equation}\label{estAB}
|A_{j+1}^+|_{\alpha,s_j} \MP (Q\sigma^{\alpha})^{-1}\varepsilon, \quad
|B_{j+1}^+|_{\alpha,s_j} \MP (Q\sigma^{\alpha})^{-1}\mu. 
\end{equation}
Let us prove the claim by induction on $0 \leq j \leq n$. For $j=0$,
we may set $P_1^+:=0$ and there is nothing to prove. So let us assume
that the claim is true for some $j-1 \geq 0$, and we need to show it
is still true for $j \geq 1$. By this inductive assumption, we have
\[ \hat{H} \circ \Phi^j=\hat{H} \circ \Phi^{j-1} \circ X_j^1=(N+P_j+P_j^+) \circ X_j^1 \]
with 
\begin{equation}\label{estABind}
  |A_{j}^+|_{\alpha,s_{j-1}} \MP (Q\sigma^{\alpha})^{-1}\varepsilon,
  \quad |B_{j}^+|_{\alpha,s_{j-1}} \MP (Q\sigma^{\alpha})^{-1}\mu. 
\end{equation}
Let $S_j=\omega\cdot I - v_j\cdot I$ so that $N=N_j+S_j$ and thus
\[ \hat{H} \circ \Phi^j=(N_j+S_j+P_j+P_j^+) \circ X_j^1=(N_j+S_j+P_j)
\circ X_j^1+P_j^+ \circ X_j^1. \]
Let us consider the first summand of the last sum. Using the
equality~\eqref{eqhomo2}, a standard computation based on Taylor's
formula with integral remainder gives
\[ (N_j+S_j+P_j) \circ X_j^1 = N+[P_j]_{v_j} +\tilde{P}_{j+1}= N+P_{j+1}+\tilde{P}_{j+1} \]
with
\[ \tilde{P}_{j+1}=\int_0^1 U_{j+1}^t \circ X_{j}^t dt, \quad U_{j+1}^t:=\{ (1-t)P_{j+1}+t P_j +S_j, X_j \}.  \] 
Clearly, $U_{j+1}^t$ is still of the form
\[ U_{j+1}^{t}(\theta,I)=U_{j+1}^{t}(\theta,0)+\nabla_I
U_{j+1}^t(\theta,0)\cdot I \]
as this is true for $P_j$, $S_j$, $X_j$ and that this form is
preserved under Poisson bracket. Using the estimates for
$P_j(\theta,0)$, $\nabla_I P_j(\theta,0)$, $X_j(\theta,0)$,
$\nabla_I X_j(\theta,0)$ (given respectively in~\eqref{Estim0} and
in~\eqref{Estim00}), the fact that
\[ S_j(\theta,0)=0, \quad \nabla_IS_j(\theta,0)=\omega-v_j \]
with the inequality~\eqref{shift}, and the estimates for the
derivatives and the product of Gevrey functions (given respectively in
Proposition~\ref{derivative}, Corollary~\ref{corderivative} and
Proposition~\ref{produit}, Corollary~\ref{corproduit},
Appendix~\ref{app2}), one finds, for all $t \in [0,1]$
\[ |U_{j+1}^t(\theta,0)|_{\alpha,s_j} \MP
(\sigma^{-\alpha}q_j\varepsilon \mu+\sigma^{-\alpha}q_j\varepsilon
\mu+\sigma^{-\alpha}q_j\varepsilon(q_jQ)^{-1}) \MP
\sigma^{-\alpha}q_j\varepsilon \mu
+(Q\sigma^{\alpha})^{-1}\varepsilon.  \]
Since $q_j \MP \Psi(Q)$, using~\eqref{eq:cond5} the latter estimate reduces to
\[ |U_{j+1}^t(\theta,0)|_{\alpha,s_j} \MP (Q\sigma^{\alpha})^{-1}\varepsilon.  \]
Similarly, one obtains
\[ |\nabla_I U_{j+1}^t(\theta,0)|_{\alpha,s_j} \MP
(Q\sigma^{\alpha})^{-1}\mu.  \]
Then, using the expression of $X_j^t$ and the associated
estimates~\eqref{estflot1}, a direct computation, still
using~\eqref{eq:cond5}, gives
\[ |\tilde{P}_{j+1}(\theta,0)|_{\alpha,s_j} \MP (Q\sigma^{\alpha})^{-1}\varepsilon \]
and
\[ |\nabla_I\tilde{P}_{j+1}(\theta,0)|_{\alpha,s_j} \MP
(Q\sigma^{\alpha})^{-1}\mu.  \]
Using again the estimates of $X_j^t$ given by~\eqref{estflot1},
and the inductive assumption~\eqref{estABind}, we also find
\[ |P_j^+ \circ X_j^1(\theta,0)|_{\alpha,s_j} \MP (Q\sigma^{\alpha})^{-1}\varepsilon \]
and
\[ |\nabla_I(P_j^+ \circ X_j^1)(\theta,0)|_{\alpha,s_j} \MP (Q\sigma^{\alpha})^{-1}\mu.  \]
Eventually, we may define
\[ P_{j+1}^+:=\tilde{P}_{j+1}+P_j^+ \circ X_j^1 \]
so that
\[ \hat{H} \circ \Phi^j= N+P_{j+1}+P_{j+1}^+, \quad P_{j+1}^+(\theta,I)=A_{j+1}^+(\theta)+B_{j+1}^+(\theta)\cdot I \] 
and these last estimates imply that
\begin{equation*}
|A_{j+1}^+|_{\alpha,s_j} \MP (Q\sigma^{\alpha})^{-1}\varepsilon, \quad |B_{j+1}^+|_{\alpha,s_j} \MP (Q\sigma^{\alpha})^{-1}\mu.
\end{equation*}
The claim is proved. So we may set 
\[ \Phi:=\Phi^n, \quad (E,F,G):=(E^n,F^n,G^n), \]
with, as~\eqref{estflot2} tells us with $j=n$ and
$s_n = s - \sigma/2$,
\begin{equation}\label{estflot3}
  |E|_{\alpha,s - \sigma/2}  \MP \Psi(Q)\mu, \quad |F|_{\alpha,s
    \sigma/2} \MP \sigma^{-\alpha}\Psi(Q)\mu, \quad |G|_{\alpha,s -
    \sigma/2} \MP \sigma^{-\alpha}\Psi(Q)\varepsilon. 
\end{equation} 

Observe that $P_{n+1}=[\cdots[P]_{v_1}\cdots]_{v_n}=[P]_{v_1,\dots,v_n}$, and thus by Proposition~\ref{cordio}, $P_{n+1}=[P]$, and as a consequence
\[ \hat{H} \circ \Phi(\theta,I)=e+\omega\cdot I+[A]+[B]\cdot I+A^+_{n+1}(\theta)+B^+_{n+1}(\theta)\cdot I \]
with the estimates
\begin{equation}\label{estAB2}
|A_{n+1}^+|_{\alpha,s-\sigma/2} \MP (Q\sigma^{\alpha})^{-1}\varepsilon, \quad |B_{n+1}^+|_{\alpha,s-\sigma/2} \MP (Q\sigma^{\alpha})^{-1}\mu.
\end{equation}

\medskip

\textit{4. Estimate of the remainder}

\medskip

Now we take into account the remainder term $R$ that we previously ignored: we have $H=\hat{H}+R$, and therefore
\[ H \circ \Phi(\theta,I)=e+\omega\cdot I+[A]+[B]\cdot
I+A^+_{n+1}(\theta)+B^+_{n+1}(\theta)\cdot I+R \circ
\Phi(\theta,I).  \] Let us decompose
\[ R\circ \Phi(\theta,I)=
\underbrace{R\circ\Phi(\theta,0)}_{R_0(\theta)} +
\underbrace{\nabla_I(R \circ \Phi)(\theta,0)}_{R_1(\theta)} \cdot I
+\tilde{R}(\theta,I)\]
and let us define
\[ \tilde{A}:=A^+_{n+1}+R_0, \quad
\tilde{B}:=B^+_{n+1}+R_1. \]
We have $R(\theta,I)=M(\theta,I) \cdot I^2$ and as $H$ and $R$ differ only by terms of order at most one in $I$, $\nabla_I^2 H=\nabla_I^2 R$ so
\[ M(\theta,I)=\int_0^1(1-t)\nabla_I^2 H(\theta,tI)dt=\int_0^1(1-t)\nabla_I^2 R(\theta,tI)dt \]
and therefore $|M|_{\alpha,s}\leq \eta$. Then, as $\Phi(\theta,0)=(\theta+E(\theta),G(\theta))$, we have the expression
\[ R_0(\theta)=R(\Phi(\theta,0))=M(\theta+E(\theta),G(\theta))\cdot G(\theta)^2 \]
and so using the above estimate on
$M$, together with the estimates on $E$, $G$ and the estimates for the product and compostion of Gevrey
functions (given respectively in Proposition~\ref{produit} and
Proposition~\ref{composition}, Appendix~\ref{app2}), we find
\[ |R_0|_{\alpha,s-\sigma/2} \MP \eta|G|^2_{\alpha,s-\sigma/2} \MP
\eta(\sigma^{-\alpha}\Psi(Q)\mu)^2 \varepsilon \MP \eta
(Q\sigma^{\alpha})^{-2}\varepsilon \MP \eta
(Q\sigma^{\alpha})^{-1}\varepsilon. \] 
Then, we have $\nabla_I R(\theta,I)=\hat{M}(\theta,I)\cdot I^2$ with 
\[ \hat{M}(\theta,I)=\int_0^1 \nabla_I^2 H(\theta,tI)dt=\int_0^1\nabla_I^2 R(\theta,tI)dt \]
and hence $|\hat{M}|_{\alpha,s}\leq \eta$. Since 
\begin{equation}\label{F}
|\nabla_I \Phi-\mathrm{Id}|_{\alpha,s-\sigma}=
|F|_{\alpha,s-\sigma} \MP \sigma^{-\alpha}\Psi(Q)\mu\PM 1  
\end{equation}
we obtain, using the fact that
$\varepsilon \leq \mu^2$ and proceeding as before,
\[ |R_1|_{\alpha,s-\sigma/2} \MP \eta|G|_{\alpha,s-\sigma/2} \MP
\eta\sigma^{-\alpha}\Psi(Q)\varepsilon \MP\eta (\sigma^{-\alpha}\Psi(Q)\mu)\mu
\MP\eta (Q\sigma^{\alpha})^{-1}\mu. \] 
These last estimates on $R_0$ and $R_1$, together with~\eqref{estAB2}, imply
\begin{equation*}
|\tilde{A}|_{\alpha,s-\sigma/2} \MP(1+\eta)
(Q\sigma^{\alpha})^{-1}\varepsilon, \quad
|\tilde{B}|_{\alpha,s-\sigma/2} \MP(1+\eta) (Q\sigma^{\alpha})^{-1}\mu. 
\end{equation*}
We can finally now use~\eqref{eq:cond6} to ensure that
\begin{equation}\label{estABF}
|\tilde{A}|_{\alpha,s-\sigma/2} \leq \varepsilon/16, \quad
|\tilde{B}|_{\alpha,s-\sigma/2} \leq \mu/4. 
\end{equation}
It is important to recall here that we may choose the implicit constant in~\eqref{eq:cond6} as large as we want (in order to achieve~\eqref{estABF}) without affecting any of the other implicit constants. Then observe also that $H \circ \Phi$ and $\tilde{R}$ differ only by terms of order at most one in $I$, so
\[   \nabla_I^2 (H\circ \Phi)=\nabla_I^2 \tilde{R}, \quad \nabla_I^2 H=\nabla_I^2 R 
\]
and therefore using the formula for the Hessian of a composition,~\eqref{F} and the fact that $\nabla_I^2 \Phi$ is identically zero, one finds
\begin{equation}\label{estM}
|\nabla_I^2 \tilde{R}-\nabla_I^2 R \circ \Phi|_{\alpha,s-\sigma/2} \MP \eta |F|_{\alpha,\sigma}.
\end{equation}
We also set $\tilde{e}:=e+[A]$ and observe that
\begin{equation}\label{este}
|\tilde{e}-e|_{\alpha,s-\sigma/2} \leq |[A]|_{\alpha,s} \leq |A|_{\alpha,s}.
\end{equation}

\medskip

\textit{5. Change of frequencies and final estimates}

\medskip

Let us now write explicitly the dependence on the parameter $\omega \in D_h$: we have 
\[ H \circ \Phi(\theta,I,\omega)=\tilde{e}(\omega)+(\omega+[B](\omega))\cdot I+\tilde{A}(\theta,\omega)+\tilde{B}(\theta,\omega)\cdot I+ \tilde{R}(\theta,I,\omega).\]
Consider the map $\phi(\omega):=\omega+[B(\omega)]$, it satisfies
\[ |\phi-\mathrm{Id}|_{\alpha,s} \leq |[B]|_{\alpha,s} \leq
|B|_{\alpha,s} \leq \mu \]
and therefore the conditions~\eqref{smallomega} of
Proposition~\ref{propomega} are satisfied: the first condition
of~\eqref{smallomega} follows, from instance, from
condition~\eqref{eq:cond2} and the fact that $\Psi(Q)\geq Q \geq 1$,
whereas the second condition of~\eqref{smallomega} is implied by
condition~\eqref{eq:cond7}. Hence Proposition~\ref{propomega} applies
and one finds a unique
$\varphi \in G_{\alpha,s-\sigma/2}(D_{h/2},D_h)$ such that
$\phi \circ \varphi=\mathrm{Id}$ and
\begin{equation}\label{estom}
|\varphi-\mathrm{Id}|_{\alpha,s-\sigma/2} \leq |\phi-\mathrm{Id}|_{\alpha,s} \leq \mu.
\end{equation}  
We do have $\varphi(\omega)+[B(\varphi(\omega))]=\omega$ and thus,
setting $\mathcal{F}:=(\Phi,\varphi)$, this implies that
\[ H \circ \mathcal{F}(\theta,I,\omega)= H \circ
\Phi(\theta,I,\varphi(\omega)) =
\tilde{e}(\varphi(\omega))+\omega\cdot
I+\tilde{A}(\theta,\varphi(\omega)) +
\tilde{B}(\theta,\varphi(\omega))\cdot I+
\tilde{R}(\theta,I,\varphi(\omega))\] and at the end we set
\[ e^+:=\tilde{e}\circ \varphi, \quad A^+:=\tilde{A} \circ \varphi,
\quad B^+:=\tilde{B} \circ \varphi, \quad R^+:=\tilde{R} \circ
\varphi. \] 
Using once again Proposition~\ref{composition}, the
inequalities~\eqref{estABF},~\eqref{estM} and~\eqref{este} imply
\begin{equation*}
\begin{cases}
|A^+|_{\alpha,s-\sigma}=|\tilde{A} \circ \varphi|_{\alpha,s-\sigma}
\leq |\tilde{A}|_{\alpha,s-\sigma/2} \leq \varepsilon/16, \\  
|B^+|_{\alpha,s-\sigma}=|\tilde{B} \circ \varphi|_{\alpha,s-\sigma}
\leq |\tilde{B}|_{\alpha,s-\sigma/2} \leq \mu/4, \\  
|e^+-e\circ \varphi|_{\alpha,s-\sigma}=|(\tilde{e}-e)\circ
\varphi|_{\alpha,s-\sigma} \leq |\tilde{e}-e|_{\alpha,s-\sigma/2} \leq
|A|_{\alpha,s},  
\\ 
|\nabla_I^2 R^+-\nabla_I^2 R \circ \mathcal{F}|=|(\nabla_I^2 \tilde{R}-\nabla_I^2 R \circ \Phi)\circ \varphi|_{\alpha,s-\sigma}
\leq |\nabla_I^2 \tilde{R}-\nabla_I^2 R \circ \Phi|_{\alpha,s-\sigma/2} \MP \eta |F|_{\alpha,s-\sigma},
\end{cases}
\end{equation*}
which were the estimates~\eqref{stepest2} we needed to prove. The
transformation
$\mathcal{F}=(\Phi,\varphi)=(E,F,G,\varphi) \in \mathcal{G}$ maps
$\T^n \times D_{r-\delta,h/2}$ into $\T^n \times D_{r,h}$ and it
follows from~\eqref{estflot3} and~\eqref{estom} that
\begin{equation*}
\begin{cases}
|E|_{\alpha,s-\sigma} \MP \Psi(Q)\mu, \quad |\nabla E|_{\alpha,s-\sigma} \MP \sigma^{-\alpha}\Psi(Q)\mu, \\
|F|_{\alpha,s-\sigma} \MP \sigma^{-\alpha}\Psi(Q)\mu, \quad |\nabla F|_{\alpha,s-\sigma} \MP \sigma^{-2\alpha}\Psi(Q)\mu, \\
|G|_{\alpha,s-\sigma} \MP \sigma^{-\alpha}\Psi(Q)\varepsilon, \quad |\nabla G|_{\alpha,s-\sigma} \MP \sigma^{-2\alpha}\Psi(Q)\varepsilon,   \\
|\varphi-\mathrm{Id}|_{\alpha,s-\sigma} \leq \mu, \quad |\nabla \varphi-\mathrm{Id}|_{\alpha,s-\sigma} \MP \sigma^{-\alpha} \mu
\end{cases}
\end{equation*}
which were the wanted estimates~\eqref{stepest1}. This concludes the proof. 
\end{proof}

\subsection{Iterations and convergence}\label{sec:iteration}

We now define, for $i \in \N$, the following decreasing geometric sequences:
\begin{equation}\label{defsuite1}
\varepsilon_i:=16^{-i} \varepsilon, \quad \mu_i:=4^{-i}\mu, \quad \delta_i:=2^{-i-2}r, \quad h_i=2^{-i} h.
\end{equation}
Next, for a constant $Q_0 \geq n+2$ to be chosen below, we define $\Delta_i$ and $Q_i$, $i\in \N$, by 
\begin{equation}\label{defsuite2}
\Delta_i=2^i \Delta(Q_0), \quad Q_i=\Delta^{-1}(\Delta_i)=\Delta^{-1}(2^i \Delta(Q_0)), 
\end{equation}
and then we define $\sigma_i$, $i\in \N$, by
\begin{equation}\label{defsuite3}
\sigma_i= C Q_i^{-\frac{1}{\alpha}} 
\end{equation}
where $C \geq 1$ is a sufficiently large constant so that the last condition of~\eqref{cond1} is satisfied for $\sigma=\sigma_0$ and $Q=Q_0$ (and thus for $\sigma=\sigma_i$ and $Q=Q_i$, for any $i \in \N$); clearly, this constant is of the form $C\EP (1+\eta)^{1/\alpha}$. Finally, we define $s_i$ and $r_i$, $i\in\N$, by 
\begin{equation}\label{defsuite4}
s_0=s, \quad s_{i+1}=s_i-\sigma_i, \quad r_0=r, \quad r_{i+1}=r_i-\delta_i.  
\end{equation}
Obviously, we have
\[ \lim_{i \rightarrow +\infty} r_i=r-\sum_{i \in \N}\delta_i=r/2. \]
We claim that, assuming $\Delta^{-1}$ satisfies~\eqref{BR2}, which is equivalent to $\omega_0 \in \mathrm{BR}_\alpha$, we can choose $Q_0$ sufficiently large so that 
\begin{equation*}
\lim_{i \rightarrow +\infty} s_i\geq s/2 \Longleftrightarrow \sum_{i \in \N} \sigma_i \leq s/2.
\end{equation*}
Indeed, since $Q_i=\Delta^{-1}(\Delta_i)=\Delta^{-1}\left(2^{i}\Delta(Q_0)\right)$, we have
\[ \sum_{i\geq 1}Q_i^{-\frac{1}{\alpha}}=\sum_{i\geq 1} \frac{1}{\left(\Delta^{-1}\left(2^{i}\Delta(Q_0)\right)\right)^{\frac{1}{\alpha}}}\leq \int_{0}^{+\infty}\frac{dy}{\left(\Delta^{-1}\left(2^{y}\Delta(Q_0)\right)\right)^{\frac{1}{\alpha}}}=\int_{\Delta(Q_0)}^{+\infty}\frac{(\ln 2)^{-1}dx}{x(\Delta^{-1}(x))^{\frac{1}{\alpha}}}<+\infty \]
where we made a change of variables $x:=2^y \Delta(Q_0)$, and the last integral converges since $\Delta^{-1}$ satisfies~\eqref{BR2}. Now as $\sigma_i =C Q_i^{-\frac{1}{\alpha}}$, we have
\[ \sum_{i \in \N} \sigma_i = C \sum_{i\geq 0}Q_i^{-\frac{1}{\alpha}}=CQ_0^{-\frac{1}{\alpha}}+C \sum_{i\geq 1}Q_i^{-\frac{1}{\alpha}} \leq CQ_0^{-\frac{1}{\alpha}}+C \int_{\Delta(Q_0)}^{+\infty}\frac{(\ln 2)^{-1}dx}{x(\Delta^{-1}(x))^{\frac{1}{\alpha}}}\leq s/2 \]
provided we choose $Q_0$ sufficiently large in order to have 
\begin{equation}\label{eqQ}
Q_0^{-\frac{1}{\alpha}}+\int_{\Delta(Q_0)}^{+\infty}\frac{(\ln 2)^{-1}dx}{x(\Delta^{-1}(x))^{\frac{1}{\alpha}}}\leq (2C)^{-1} s.
\end{equation}
Applying inductively Proposition~\ref{kamstep} we will easily obtain the following proposition.

\begin{proposition}\label{kamiter}
Let $H$ be as in~\eqref{HAM}, with $\omega_0 \in \mathrm{BR}_\alpha$, and fix $Q_0 \geq n+2$ sufficiently large so that~\eqref{eqQ} is satisfied. Assume that
\begin{equation}\label{cond2}
\sqrt{\varepsilon} \leq \mu \leq h/2, \quad \sqrt{\varepsilon} \leq r, \quad h \PM \Delta(Q_0)^{-1}.
\end{equation}
Then, for each $i\in \N$, there exists an $(\alpha,s_i)$-Gevrey smooth transformation 
\[ \mathcal{F}^i=(\Phi^i,\varphi^i)=(E^i,F^i,G^i,\varphi^i) : \T^n \times D_{r_i,h_i} \rightarrow \T^n \times D_{r,h} \in \mathcal{G}, \]
such that $\mathcal{F}^{i+1}=\mathcal{F}^{i} \circ \mathcal{F}_{i+1}$, with
\[ \mathcal{F}_{i+1}=(\Phi_{i+1},\varphi_{i+1})=(E_{i+1},F_{i+1},G_{i+1},\varphi_{i+1}) : \T^n \times D_{r_{i+1},h_{i+1}} \rightarrow \T^n \times D_{r_{i},h_{i}} \in \mathcal{G}, \]
satisfying the following estimates
\begin{equation}\label{stepest1i}
\begin{cases}
|E_{i+1}|_{\alpha,s_{i+1}} \MP \Psi(Q_{i})\mu_{i}, \quad |\nabla E_{i+1}|_{\alpha,s_{i+1}} \MP \sigma_i^{-\alpha}\Psi(Q_i)\mu_i, \\ 
|F_{i+1}|_{\alpha,s_{i+1}} \MP \sigma_i^{-\alpha}\Psi(Q_i)\mu_i, \quad |\nabla F_{i+1}|_{\alpha,s_{i+1}} \MP \sigma_i^{-2\alpha}\Psi(Q_i)\mu_i, \\ 
|G_{i+1}|_{\alpha,s_{i+1}}\MP \sigma_i^{-\alpha}\Psi(Q_i)\varepsilon_i, \quad |\nabla G_{i+1}|_{\alpha,s_{i+1}} \MP \sigma_i^{-2\alpha}\Psi(Q_i)\varepsilon_i, \\ 
|\varphi_{i+1}-\mathrm{Id}|_{\alpha,s_{i+1}} \leq \mu_i, \quad  |\nabla \varphi_{i+1}-\mathrm{Id}|_{\alpha,s_{i+1}} \MP \sigma_i^{-\alpha} \mu_i
\end{cases}
\end{equation}
and such that 
\[ 
  H \circ \mathcal{F}^i(\theta,I,\omega) =
  \underbrace{e^i(\omega)+\omega\cdot I}_{N^i(I,\omega)} +
  \underbrace{A^i(\theta)+B^i(\theta)\cdot I}_{P^i(\theta,I,\omega)} +
  \underbrace{M^i(\theta,I,\omega)\cdot I^2}_{R^i(\theta,I,\omega)} 
\]
with the estimates
\begin{equation}\label{stepest2i}
\begin{cases}
  |A^i|_{\alpha,s_{i+1}} \leq \varepsilon_i, \quad |B^i|_{\alpha,s_{i+1}} \leq \mu_i, \\
  |e^{i+1}-e^{i} \circ \varphi_{i+1}|_{\alpha,s_{i+1}} \leq |A^{i}|_{\alpha,s_{i}}, \\
  |\nabla_I^2 R^{i+1}-\nabla_I^2 R^{i} \circ \mathcal{F}_{i+1}|_{\alpha,s_{i+1}}\MP
  \eta|F_{i+1}|_{\alpha,s_{i+1}}. 
\end{cases}
\end{equation}
\end{proposition}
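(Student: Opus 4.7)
The plan is to proceed by straightforward induction on $i$, using Proposition~\ref{kamstep} as a black box. For the base case $i=0$, I would take $\mathcal{F}^0=\mathrm{Id}$, so that $H\circ\mathcal{F}^0=H$ is already of the form~\eqref{HAM} with $A^0=A$, $B^0=B$, $M^0=M$, satisfying $|A^0|_{\alpha,s_0}\leq\varepsilon_0$, $|B^0|_{\alpha,s_0}\leq\mu_0$ and $|\nabla_I^2 R^0|_{\alpha,s_0}\leq\eta$. For the inductive step, assuming the conclusion up to index $i$, I would apply Proposition~\ref{kamstep} to $H^i:=H\circ\mathcal{F}^i$ on the domain $\T^n\times D_{r_i,h_i}$, with $(\varepsilon,\mu,r,h,\delta,\sigma,Q)$ replaced by $(\varepsilon_i,\mu_i,r_i,h_i,\delta_i,\sigma_i,Q_i)$, extract the transformation $\mathcal{F}_{i+1}$ produced by that proposition, and set $\mathcal{F}^{i+1}:=\mathcal{F}^i\circ\mathcal{F}_{i+1}\in\mathcal{G}$. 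The estimates~\eqref{stepest1i} would then be direct transcriptions of~\eqref{stepest1}, while the first two lines of~\eqref{stepest2i} follow from~\eqref{stepest2} upon observing that $\varepsilon_i/16=\varepsilon_{i+1}$ and $\mu_i/4=\mu_{i+1}$.

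The substance of the argument lies in verifying the five inequalities~\eqref{cond1} uniformly in $i$. Using the definitions~\eqref{defsuite1}--\eqref{defsuite4} and the identity $\Delta(Q_i)=2^i\Delta(Q_0)$ built into~\eqref{defsuite2}, these reduce as follows. The inequality $\sqrt{\varepsilon_i}=4^{-i}\sqrt{\varepsilon}\leq 4^{-i}\mu=\mu_i$ and $\mu_i\leq 2^{-i}\mu\leq 2^{-i}h/2=h_i/2$ both follow from~\eqref{cond2}; the inequality $\sqrt{\varepsilon_i}\leq r_i$ reduces to $\sqrt{\varepsilon}\leq r$ since $r_i\geq r/2$; $h_i\PM \Delta(Q_i)^{-1}$ becomes $2^{-i}h\PM 2^{-i}\Delta(Q_0)^{-1}$, i.e.\ $h\PM\Delta(Q_0)^{-1}$, which is~\eqref{cond2}; $r_i\mu_i\PM\delta_i\Delta(Q_i)^{-1}$ reduces similarly to $\mu\PM\Delta(Q_0)^{-1}$, implied by $\mu\leq h/2$ and~\eqref{cond2}; finally $(1+\eta)\PM Q_i\sigma_i^\alpha=C^\alpha$ is precisely the defining property of the constant $C\EP(1+\eta)^{1/\alpha}$ appearing in~\eqref{defsuite3}.

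The main obstacle, and the step requiring the most care, is the uniform control of $|\nabla_I^2 R^i|_{\alpha,s_i}$ along the iteration, since Proposition~\ref{kamstep} requires this quantity to be bounded at every step. I would bootstrap: assume inductively $|\nabla_I^2 R^i|_{\alpha,s_i}\leq\eta_i$ with $\eta_i\leq 2\eta$. Applying Proposition~\ref{kamstep} at step $i$ with this bound gives $|\nabla_I^2 R^{i+1}-\nabla_I^2 R^i\circ\mathcal{F}_{i+1}|_{\alpha,s_{i+1}}\MP\eta_i|F_{i+1}|_{\alpha,s_{i+1}}$, and Proposition~\ref{composition} controls $|\nabla_I^2 R^i\circ\mathcal{F}_{i+1}|_{\alpha,s_{i+1}}$ by $|\nabla_I^2 R^i|_{\alpha,s_i}$ since $\mathcal{F}_{i+1}$ is close to the identity. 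Combining, $\eta_{i+1}\leq\eta_i(1+c|F_{i+1}|_{\alpha,s_{i+1}})$. From~\eqref{stepest1i} one has $|F_{i+1}|_{\alpha,s_{i+1}}\MP\sigma_i^{-\alpha}\Psi(Q_i)\mu_i=C^{-\alpha}\Delta(Q_i)\mu_i\MP 2^{-i}\Delta(Q_0)\mu$, so the series $\sum_i|F_{i+1}|_{\alpha,s_{i+1}}$ is geometric and bounded by a universal multiple of $\Delta(Q_0)\mu$, which is small by~\eqref{cond2}. Hence the infinite product $\prod_i(1+c|F_{i+1}|_{\alpha,s_{i+1}})$ is bounded by, say, $2$, closing the induction $\eta_i\leq 2\eta$. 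With this uniform bound in hand, the factor $\eta_i$ in the output of the KAM step gets absorbed into $\eta$ up to enlarging the implicit $\MP$ constant, yielding the third line of~\eqref{stepest2i} in the form stated.
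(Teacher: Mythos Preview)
Your proposal is correct and follows essentially the same inductive scheme as the paper: set $\mathcal{F}^0=\mathrm{Id}$, apply Proposition~\ref{kamstep} at each step with the parameters $(\varepsilon_i,\mu_i,r_i,h_i,\delta_i,\sigma_i,Q_i)$, and verify that the five conditions of~\eqref{cond1} reduce by the geometric decay rates in~\eqref{defsuite1}--\eqref{defsuite3} to the case $i=0$, which is precisely~\eqref{cond2}. Your reduction of each condition matches the paper's.

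In one respect you are more careful than the paper: you explicitly address the need for a uniform bound on $|\nabla_I^2 R^i|_{\alpha,s_i}$ along the iteration, which is indeed required to re-apply Proposition~\ref{kamstep} (with a fixed $\eta$) at every step. The paper's proof glosses over this point, simply asserting that Proposition~\ref{kamstep} applies. Your bootstrap argument---summing the geometric series $\sum_i |F_{i+1}|_{\alpha,s_{i+1}} \MP \sum_i 2^{-i}\Delta(Q_0)\mu$ and using the smallness of $\Delta(Q_0)\mu$ from~\eqref{cond2} to keep $\eta_i\leq 2\eta$---is exactly what is needed to close this gap.
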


Let us emphasize that the implicit constants in the above proposition depend only on $n$ and $\alpha$ and are thus independent of $i \in \N$.

\begin{proof}
  For $i=0$, we let $\mathcal{F}^0$ be the identity, $e^0:=e$,
  $A^0:=A$, $B^0:=B$, $R^0:=R$, $M^0:=M$ and there is nothing to
  prove. The general case follows by an easy induction. Indeed, assume
  that the statement holds true for some $i\in \N$ so that
  $H \circ \mathcal{F}^i$ is $(\alpha,s_i)$-Gevrey on the domain
  $\T^n \times D_{r_i,s_i}$. We want to apply
  Proposition~\ref{kamstep} to this Hamiltonian, with
  $\varepsilon=\varepsilon_i$, $\mu=\mu_i$, $r=r_i$, $s=s_i$, $h=h_i$,
  $\sigma=\sigma_{i}$ and $Q=Q_i$. First, by our choice of $Q_0$ and
  $\delta_0$ it is clear that $0<\sigma_{i}< s_i$,
  $0 < \delta_i < r_i$, and $Q_i \geq n+2$. Then we need to check that
  the conditions
  \begin{equation*}
    \sqrt{\varepsilon_i} \leq \mu_i \leq h_i/2,
    \quad \sqrt{\varepsilon_i} \leq r_i, \quad h_i \PM \Delta(Q_i)^{-1}, \quad r_i\mu_i \PM \delta_i
    \Delta(Q_i)^{-1},  \quad 1 \PM Q_i\sigma_i^{\alpha} 
  \end{equation*}
  are satisfied. Since 
  \begin{equation*}
    \Delta(Q_i) = \Delta(\Delta^{-1}(\Delta_i)) =\Delta_i, \quad 2^{-i}r \leq r_i \leq r,
  \end{equation*}
  it is sufficient to check the conditions
  \begin{equation}\label{condit}
    \sqrt{\varepsilon_i} \leq \mu_i \leq h_i/2,
    \quad \sqrt{\varepsilon_i} \leq 2^{-i}r, \quad h_i \PM \Delta_i^{-1}, \quad r\mu_i \PM \delta_i
    \Delta_i^{-1},  \quad 1 \PM Q_i\sigma_i^{\alpha} . 
  \end{equation}
  The last condition of~\eqref{condit} is satisfied, for all
  $i \in \N$, simply by the choice of the constant $C$ in the definition
  of $\sigma_i$. As for the other four conditions of~\eqref{condit},
  using the fact that the sequences $\varepsilon_i$, $\mu_i$, $h_i$,
  $\Delta_i^{-1}$ and $\delta_i$ decrease at a geometric rate with
  respective ratio $1/16$, $1/4$, $1/2$, $1/2$ and $1/2$, it is clear
  that they are satisfied for any $i \in \N$ if and only if they are
  satisfied for $i=0$. The first three conditions of~\eqref{condit} for
  $i=0$ are nothing but~\eqref{cond2}. Moreover, using our choice of
  $\delta_0=r/4$, the fourth condition of~\eqref{condit} for $i=0$ reads
  $\mu \PM \Delta_0^{-1}$ and this also follows from~\eqref{cond2}.

Hence Proposition~\ref{kamstep} can be applied, and all the conclusions of the statement follow at once from the conclusions of Proposition~\ref{kamstep}.
\end{proof}

We can finally conclude the proof of Theorem~\ref{KAMparameter}, by showing that one can pass to the limit $i \rightarrow +\infty$ in Proposition~\ref{kamiter}.

\begin{proof}[Proof of Theorem~\ref{KAMparameter}]
Recall that we are given $\varepsilon>0$, $\mu>0$, $r>0$, $s>0$, $h>0$ and that we define the sequences $\varepsilon_i,\mu_i,\delta_i,h_i$ in~\eqref{defsuite1}, and then we chose $Q_0 \geq n+2$ satisfying~\eqref{eqQ} to define the sequences $\Delta_i,Q_i$ in~\eqref{defsuite2} and $\sigma_i$ in~\eqref{defsuite3} and finally, $s_i$ and $r_i$ were defined in~\eqref{defsuite4}. Moreover, we have 
\begin{equation}\label{limits}
\begin{cases}
\lim_{i \rightarrow +\infty}\varepsilon_i=\lim_{i \rightarrow +\infty}\mu_i=\lim_{i \rightarrow +\infty}h_i=0, \\ 
\lim_{i \rightarrow +\infty}r_i=r-\sum_{i=0}^{+\infty}\delta_i=r/2, \quad \lim_{i \rightarrow +\infty}s_i=s-\sum_{i=0}^{+\infty}\sigma_i\geq s/2 
\end{cases}
\end{equation}
and for later use, let us observe that the following series are convergent and can be made as small as one wishes thanks to condition~\eqref{cond0} of Theorem~\ref{KAMparameter}:
\begin{equation}\label{serie1}
\sum_{i=0}^{+\infty} \sigma_i^{-\alpha}\mu_i \leq \sum_{i=0}^{+\infty} Q_i \mu_i = \sum_{i=0}^{+\infty} (\Psi(Q_i))^{-1}\Delta_i\mu_i \leq 2 (\Psi(Q_0))^{-1}\Delta_0 \mu =2Q_0\mu
\end{equation}
\begin{equation}\label{serie2}
\sum_{i=0}^{+\infty} \mu_i \leq 2\mu 
\end{equation}
\begin{equation}\label{serie3}
\sum_{i=0}^{+\infty} \sigma_i^{-\alpha}\Psi(Q_i)\mu_i \leq \sum_{i=0}^{+\infty} \Delta_i \mu_i \leq 2\Delta_0 \mu=2 Q_0\Psi(Q_0) \mu 
\end{equation}
\begin{equation}\label{serie4}
\sum_{i=0}^{+\infty} \Psi(Q_i)\mu_i \leq \sum_{i=0}^{+\infty} Q_i^{-1}\Delta_i \mu_i \leq 2 Q_0^{-1}\Delta_0 \mu =2\Psi(Q_0)\mu
\end{equation}
\begin{equation}\label{serie5}
\sum_{i=0}^{+\infty} \sigma_i^{-\alpha}\Psi(Q_i)\varepsilon_i \leq \sum_{i=0}^{+\infty} \Delta_i \varepsilon_i \leq 2\Delta_0 \varepsilon=2 Q_0\Psi(Q_0) \varepsilon. 
\end{equation}
Now the condition~\eqref{cond0} of Theorem~\ref{KAMparameter} implies that the condition~\eqref{cond2} of Proposition~\ref{kamiter} is satisfied; what we need to prove is that the sequences given by this Proposition~\ref{kamiter} do convergence in the Banach space of $(\alpha,s/2)$-Gevrey functions. Recall that $\mathcal{F}^0=(E^0,F^0,G^0,\varphi^0)$ is the identity, while for $i \geq 0$, 
\[ (E^{i+1},F^{i+1},G^{i+1},\varphi^{i+1})=\mathcal{F}^{i+1}=\mathcal{F}^{i} \circ \mathcal{F}_{i+1}=(E^{i},F^{i},G^{i},\varphi^{i}) \circ (E_{i+1},F_{i+1},G_{i+1},\varphi_{i+1})\]
from which one easily obtains the following inductive expressions:
\begin{equation}
\begin{cases}
E^{i+1}(\theta,\omega)=E_{i+1}(\theta,\omega)+E^{i}(\theta+E_{i+1}(\theta,\omega),\varphi_{i+1}(\omega)) \\
F^{i+1}(\theta,\omega)=F_{i+1}(\theta,\omega)+F^{i}(\theta+E_{i+1}(\theta,\omega),\varphi_{i+1}(\omega))\cdot (\mathrm{Id}+F_{i+1}(\theta,\omega)) \\
G^{i+1}(\theta,\omega)=(F^{i}(\theta+E_{i+1}(\theta,\omega),\varphi_{i+1}(\omega))+\mathrm{Id})\cdot G_{i+1}(\theta,\omega)+G^{i}(\theta+E_{i+1}(\theta,\omega),\varphi_{i+1}(\omega)) \\
\varphi^{i+1}=\varphi^{i} \circ \varphi_{i+1}.
\end{cases}
\end{equation} 
Let us first prove that the sequence $\varphi^i$ converges. We claim that for all $i \in \N$, we have
\[ |\nabla \varphi^{i}|_{\alpha,s_{i}} \MP \prod_{l=0}^{i}(1+\sigma_l^{-\alpha}\mu_l) \MP 1 \]
where the fact that the last product is bounded uniformly in $i \in \N$ follows from~\eqref{serie1}. For $i=0$, $\varphi^0=\mathrm{Id}$ and there is nothing to prove; for $i \in \N$ since $\varphi^{i+1}=\varphi^\circ +\varphi_{i+1}$ we have
\[ \nabla \varphi^{i+1}=\left(\nabla \varphi^{i} \circ \varphi_{i+1}\right) \cdot \nabla \varphi_{i+1} \]
so that using the estimate for $\varphi_{i+1}$ and $\nabla \varphi_{i+1}$ given in~\eqref{stepest1i}, Proposition~\ref{kamiter}, the claim follows by induction. Using this claim, and writing
\[ \varphi^{i+1}-\varphi^{i}=\varphi^i \circ \varphi_{i+1}-\varphi^i=\left(\int_0^1\nabla \varphi^i \circ (t \varphi_{i+1}+(1-t)\mathrm{Id})dt\right)\cdot (\varphi_{i+1}-\mathrm{Id}) \]
one finds
\[ |\varphi^{i+1}-\varphi^{i}|_{\alpha,s_{i+1}} \MP |\varphi_{i+1}-\mathrm{Id}|_{\alpha,s_{i+1}}, \]
and therefore
\[ |\varphi^{i+1}-\varphi^{i}|_{\alpha,s_{i+1}} \MP \mu_i. \]
Using the convergence of~\eqref{limits} and~\eqref{serie2}, one finds that the sequence $\varphi^i$ converges to a trivial map
\[ \varphi^* : \{\omega_0\} \rightarrow D_h, \quad \varphi^*(\omega_0):=\omega_0^* \]
such that
\[ |\omega_0^*-\omega_0| \MP \mu. \]
Now let us define
\[ V_{i+1}(\theta,\omega):=(\theta+E_{i+1}(\theta,\omega),\varphi_{i+1}(\omega)), \quad V_{i+1}=(\mathrm{Id}+E_{i+1},\varphi_{i+1}) \]
and observe that since $\Psi(Q_i)\geq 1$, then the estimates for $E_{i+1}$, $\nabla E_{i+1}$, $\varphi_{i+1}$ and $\nabla \varphi_{i+1}$ given in Proposition~\ref{kamiter} implies that
\[ |V_{i+1}-\mathrm{Id}|_{\alpha,s_{i+1}} \MP \Psi(Q_{i})\mu_{i}, \quad |\nabla V_{i+1}-\mathrm{Id}|_{\alpha,s_{i+1}} \MP \sigma_i^{-\alpha}\Psi(Q_i)\mu_i.  \]
Using these estimates, and the fact that $E^{i+1}$ can be written as
\[ E^{i+1}=E_{i+1}+E^i \circ V_{i+1} \]
we can proceed as before, using the convergence of~\eqref{serie3} to show first that
\[ |\nabla E^i|_{\alpha,s_i} \MP \sum_{l=0}^i \sigma_l^{-\alpha}\Psi(Q_l)\mu_l \MP 1 \]
and then
\[ |E^{i+1}-E^i|_{\alpha,s_{i+1}} \MP |E_{i+1}|_{\alpha,s_{i+1}} \MP \Psi(Q_i)\mu_i. \] 
Using the convergence of~\eqref{limits} and~\eqref{serie4}, this shows that $E^i$ converges to a map
\[ E^* : \T^n \times \{\omega_0\} \rightarrow \T^n \times D_h \]
such that
\[ |E^*|_{\alpha,s/2} \MP \Psi(Q_0)\mu. \]
For the $F^i$, we do have the expression
\[ F^{i+1}=F_{i+1}+(F^i \circ V_{i+1})\cdot (\mathrm{Id}+F_{i+1}) \]
or alternatively
\[ F^{i+1}=(\mathrm{Id}+F^i \circ V_{i+1})\cdot F_{i+1}+ F^i \circ V_{i+1} \]
and thus
\[ F^{i+1}-F^i=(\mathrm{Id}+F^i \circ V_{i+1})\cdot F_{i+1}+ F^i \circ V_{i+1}-F^i. \]
As before, using the estimates on $F_{i+1}$ and $\nabla F_{i+1}$ given in Proposition~\ref{kamiter}, one shows that
\[ |\nabla F^i|_{\alpha,s_i} \MP \sum_{l=0}^i \sigma_l^{-2\alpha}\Psi(Q_l)\mu_l \]
but however, here, the sum above is not convergent. Yet we do have
\[ \sigma_i^{\alpha}|\nabla F^i|_{\alpha,s_i} \MP \sigma_i^{\alpha}\sum_{l=0}^i \sigma_l^{-2\alpha}\Psi(Q_l)\mu_l \MP \sum_{l=0}^i \sigma_l^{-\alpha}\Psi(Q_l)\mu_l \MP 1  \]
from~\eqref{serie4} and using the fact that the estimate for $V_{i+1}$ can be written as
\[ |V_{i+1}-\mathrm{Id}|_{\alpha,s_{i+1}} \MP \sigma_i^{\alpha}\sigma_i^{-\alpha}\Psi(Q_{i})\mu_{i} \]
one obtains
\[ |F^i \circ V_{i+1}-F^i|_{\alpha,s_{i+1}}  \MP \sigma_i^{-\alpha}\Psi(Q_{i})\mu_{i}.  \]
By induction, one shows that 
\[ |F^i|_{\alpha,s_i} \MP \sum_{l=0}^i \sigma_l^{-\alpha}\Psi(Q_l)\mu_l \MP 1 \]
from which one obtains
\[ |\mathrm{Id}+F^i \circ V_{i+1}|_{\alpha,s_i} \MP 1 \]
and as a consequence, 
\[ |F^{i+1}-F^i|_{\alpha,s_{i+1}}  \MP \sigma_i^{-\alpha}\Psi(Q_{i})\mu_{i}. \]
Using the convergence of~\eqref{limits} and~\eqref{serie3}, this shows that $F^i$ converges to a map
\[ F^* : \T^n \times \{\omega_0\} \rightarrow \T^n \times D_h \]
such that
\[ |F^*|_{\alpha,s/2} \MP Q_0\Psi(Q_0)\mu. \]
For $G^i$, we have the expression
\[ G^{i+1}=(F^i \circ V_{i+1}+\mathrm{Id})\cdot G_{i+1}+G^i \circ V_{i+1} \]
and thus
\[ G^{i+1}-G^i=(F^i \circ V_{i+1}+\mathrm{Id})\cdot G_{i+1}+G^i \circ V_{i+1}-G^i. \]
Proceeding exactly as we did for $E^i$ and $F^i$, using the convergence of~\eqref{limits},~\eqref{serie3} and~\eqref{serie5}, one finds that $G^i$ converges to a map
\[ G^* : \T^n \times \{\omega_0\} \rightarrow \T^n \times D_h \]
such that
\[ |G^*|_{\alpha,s/2} \MP Q_0\Psi(Q_0)\varepsilon. \]
In summary, the map $\mathcal{F}^i$ converges to a map
\begin{equation*}
\mathcal{F}^* : \T^n \times D_{r/2} \times \{\omega_0\} \rightarrow \T^n \times D_{r,h} 
\end{equation*}
which belongs to $\mathcal{G}$, of the form
\[ 
\begin{cases}
\mathcal{F}^*(\theta,I,\omega_0)=(\Phi^*_{\omega_0}(\theta,I),\omega_0^*), \\
\Phi_{\omega_0}^*(\theta,I)=(\theta+E^*(\theta),I+F^*(\theta)\cdot I +G^*(\theta))
\end{cases}
 \]
with the estimates
\begin{equation}\label{estF}
|E^*|_{\alpha,s/2} \MP \Psi(Q_0)\mu, \quad 
|F^*|_{\alpha,s/2} \MP Q_0\Psi(Q_0)\mu, \quad |G^*|_{\alpha,s/2} \MP Q_0\Psi(Q_0)\varepsilon, \quad 
|\omega_0^*-\omega_0| \MP \mu.
\end{equation} 
Then from the estimates
\[ |A^i|_{\alpha,s_i} \leq \varepsilon_i, \quad |B^i|_{\alpha,s_i} \leq \mu_i, \]
given in~\eqref{stepest2i}, Proposition~\ref{kamiter}, and the convergence~\eqref{limits}, it follows that both $A^i$ and $B^i$ convergence to zero. Next from the estimates
\begin{equation*}
\begin{cases}
  |e^{i+1}-e^{i} \circ \varphi_{i+1}|_{\alpha,s_{i+1}} \leq |A^{i}|_{\alpha,s_{i}}, \\
  |\nabla_I^2R^{i+1}-\nabla_I^2R^{i} \circ  \mathcal{F}_{i+1}|_{\alpha,s_{i+1}}\MP \eta
  |F_{i+1}|_{\alpha,s_{i+1}} 
\end{cases}
\end{equation*}
still given in~\eqref{stepest2i}, Proposition~\ref{kamiter}, one can
prove in the same way as we did before, that $e^i$
converges to a trivial map
\[ e^* : \{\omega_0\} \rightarrow D_h, \quad e^*(\omega_0):=e_0^* \]
such that
\begin{equation}\label{estFF}
|e_0^*-e_{\omega_0^*}| \MP \varepsilon 
\end{equation}
whereas $M^i$ converges to a map
\[ M^* : \T^n \times D_{r/2} \times \{\omega_0\} \rightarrow \T^n
\times D_{r,h} \] 
such that, setting $R^*(\theta,I)=M^*(\theta,I)I\cdot I$,
\begin{equation}\label{estFFFF}
|\nabla_I^2 R^*-\nabla_I^2 R_{\omega_0^*}|_{\alpha,s/2} \MP \eta Q_0\Psi(Q_0)\mu.
\end{equation} 
Therefore we have
\[ H \circ \mathcal{F}^*(\theta,I,\omega_0)= H_{\omega_0^*} \circ \Phi_{\omega_0}^*(\theta,I)=e_0^*+\omega_0 \cdot I
+R^*(\theta,I), \]
which, together with the previous estimates~\eqref{estF},~\eqref{estFF} and~\eqref{estFFFF}, is what we wanted to prove.
\end{proof}

\section{Proof of Theorem~\ref{destruction}, following Bessi}\label{sec:bessi}

The goal of this short section is to show how Theorem~\ref{destruction} follows directly from the work of Bessi in~\cite{Bes00}.

In Bessi, one starts with a non-resonant vector $\omega \in \R^n$ which is assumed to be ``exponentially Liouville" in the following sense: there exists $s_0>0$ and a sequence $k_j \in \Z^n$ with $|k_j| \rightarrow +\infty$ as $j \rightarrow +\infty$ for which
\begin{equation}\label{condBessi}
0<|k_j\cdot \omega| \leq e^{-s_0|k_j|}\tag{$C_{1,s_0}$}.
\end{equation}
Given this sequence of $k_j \in \Z^n$, one can find another sequence $\tilde{k}_j \in \Z^n$ such that for all $j \in \N$, $|\tilde{k}_j|\leq |k_j|$, $\tilde{k}_j \cdot k_j=0$ and $|\tilde{k}_j\cdot \omega| \geq c|\tilde{k}_j|$ for some constant $c>0$ independent of $j$. 

Then one defines the following sequence of Hamiltonians on $\R^{n}/(2\pi\Z^n) \times \R^n$ (which are similar to the Hamiltonian considered by Arnold in~\cite{Arn64}):
\begin{equation}\label{HamBessi}\tag{$H_{1,j,s}$}
\begin{cases}
H^{1,j}_{\varepsilon,\mu}(\theta,I):=\frac{1}{2}I\cdot I+F^{1,j}_{\varepsilon,\mu}(\theta), \; (\theta,I) \in \R^{n}/(2\pi\Z^n) \times \R^n \\
F^{1,j}_{\varepsilon,\mu}(\theta):=\varepsilon\nu_{1,j,s}(1-\cos(k_j\cdot\theta))(1+\mu\tilde{\nu}_{1,j,s}\cos(\tilde{k}_j\cdot\theta)) \\
0<\varepsilon \leq 1, \; 0 < \mu \leq 1, \; \nu_{1,j,s}:=e^{-s|k_j|}, \; \tilde{\nu}_{1,j,s}:=e^{-s|\tilde{k}_j|}.  
\end{cases}
\end{equation}
Observe that the only role of the sequences $\nu_{1,j,s}$ and $\tilde{\nu}_{1,j,s}$ is to ensure that the sequence of perturbations $F^{1,j}_{\varepsilon,\mu}$ satisfy, for all $j \in \N $ and all $0 \leq \mu \leq 1$:
\[ |F^{1,j}_{\varepsilon,\mu}|_s:=\sup_{\theta \in \C^n /(2\pi\Z^n), \; |\mathrm{Im}(\theta)\leq s|}|F^{1,j}_{\varepsilon,\mu}(\theta)|\leq 4\varepsilon. \]
In~\cite{Bes00}, Bessi proved the following theorem.

\begin{theorem}[Bessi]
Assume that $\omega \in \R^n$ satisfy~\eqref{condBessi}. Then, if $s_0>s$, for any $0 \leq \varepsilon \leq 1$, there exists $\mu_\varepsilon>0$ and $j_\varepsilon \in \N$ such that for any $0<\mu \leq \mu_\varepsilon$ and any $j \geq j_\varepsilon$, the Hamiltonian system defined in~\eqref{HamBessi} does not have any invariant torus $\mathcal{T}$ satisfying
\begin{itemize}
\item[$(i)$] $\mathcal{T}$ projects diffeomorphically to $\T^n$;
\item[$(i)$] There is a $C^1$ diffeomorphism between $\T^n$ and $\mathcal{T}$ which conjugates the flow on $\mathcal{T}$ to the linear flow on $\T^n$ of frequency $\omega$. 
\end{itemize}
\end{theorem}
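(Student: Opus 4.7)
The plan is to follow Bessi's variational strategy and argue by contradiction through an explicit action comparison. Suppose a torus $\mathcal{T}$ satisfying $(i)$ and $(ii)$ exists. By $(i)$, $\mathcal{T}$ is a Lagrangian graph, of the form $I=\nabla S(\theta)$ for some $C^1$ function $S$ solving the Hamilton-Jacobi equation $\tfrac{1}{2}|\nabla S|^2+F^{1,j}_{\varepsilon,\mu}=E$. By $(ii)$, orbits on $\mathcal{T}$ are quasi-periodic with frequency $\omega$, and a standard Mather--Ma\~n\'e argument shows that every such orbit is globally action-minimizing for the Lagrangian $L(\theta,\dot\theta)=\tfrac{1}{2}|\dot\theta|^2-F^{1,j}_{\varepsilon,\mu}(\theta)$ with rotation vector $\omega$. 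The goal is then to exhibit a competitor curve with the same endpoints and the same homology class whose action is strictly smaller.

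The second step is to reduce to a pendulum coupled to a transverse oscillator. In coordinates adapted to $k_j$, the piece $\tfrac{1}{2}|I|^2+\varepsilon\nu_{1,j,s}(1-\cos(k_j\cdot\theta))$ of $H^{1,j}_{\varepsilon,\mu}$ is a pendulum of natural frequency $\sqrt{\varepsilon\nu_{1,j,s}}\,|k_j|$ along the $k_j$-axis plus free translation in the orthogonal directions. The assumption \eqref{condBessi} with $s_0>s$ gives $|k_j\cdot\omega|\leq e^{-s_0|k_j|}\ll\sqrt{\varepsilon\nu_{1,j,s}}\,|k_j|=\sqrt{\varepsilon}\,e^{-s|k_j|/2}|k_j|$ for $j$ large, so the hypothetical invariant torus lies inside the separatrix layer of this pendulum, and the separatrix traversal time $T_j$ is long enough to accommodate a full winding of the transverse frequencies.

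The third and decisive step is the Melnikov comparison. Construct a competitor that starts on the hypothetical $\mathcal{T}$, makes a near-separatrix excursion in the $k_j$ direction, and rejoins $\mathcal{T}$. A first-order expansion of its action minus the torus action produces, as leading term,
\[
\Delta\mathcal{A}\;\sim\;\varepsilon\mu\,\nu_{1,j,s}\tilde\nu_{1,j,s}\int_{-\infty}^{+\infty}\bigl(1-\cos(k_j\cdot\theta_{\mathrm{sep}}(t))\bigr)\cos\bigl(\tilde k_j\cdot(\omega t+\theta_0)\bigr)\,dt,
\]
plus correction terms. The orthogonality $\tilde k_j\cdot k_j=0$ decouples the transverse phase from the separatrix motion, while the lower bound $|\tilde k_j\cdot\omega|\geq c|\tilde k_j|$ prevents destructive cancellation in the integral. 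Choosing the phase $\theta_0$ appropriately yields a strictly negative Melnikov integral, and for $0<\mu\leq\mu_\varepsilon$ and $j\geq j_\varepsilon$ this leading term dominates all corrections. Hence $\Delta\mathcal{A}<0$, contradicting the minimality of orbits on $\mathcal{T}$ and thus the very existence of the torus.

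The main obstacle is the quantitative bookkeeping in the third step: one must bound the error between the actual competitor orbit and the idealized separatrix ansatz, control the remainder terms in the action expansion, and calibrate $\mu_\varepsilon$ and $j_\varepsilon$ so that the Melnikov term always wins. This is precisely the quantitative content of Bessi's analysis in \cite{Bes00}, where all remainders are shown to be absorbed into the leading Melnikov contribution under the stated smallness conditions on $\mu$ and largeness on $j$, completing the proof.
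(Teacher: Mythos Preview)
The paper does not prove this theorem: it is stated as a result of Bessi with the attribution ``In~\cite{Bes00}, Bessi proved the following theorem'' and no argument is given. Your proposal therefore supplies more than the paper does.

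What you have written is a correct high-level outline of Bessi's variational strategy: the Lagrangian-graph and action-minimizing consequences of $(i)$ and $(ii)$ via Mather theory, the observation that the arithmetic hypothesis $s_0>s$ forces $|k_j\cdot\omega|$ to be small compared to the pendulum separatrix scale $\sqrt{\varepsilon\nu_{1,j,s}}\,|k_j|$, and the construction of a competitor curve along a separatrix excursion whose action gain is governed by a Melnikov-type integral. This is indeed the architecture of \cite{Bes00}, and you are right to defer the quantitative remainder estimates to that reference.

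One small caution on your third step: the role of the lower bound $|\tilde k_j\cdot\omega|\geq c|\tilde k_j|$ is not quite to ``prevent destructive cancellation'' but rather to keep the transverse frequency bounded away from zero uniformly in $j$, so that the Melnikov integral is nondegenerate (has simple zeros) and the resulting separatrix splitting is of a controlled, computable size. With that adjustment, your sketch faithfully reflects Bessi's argument and is sound as an outline.
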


It is clear that it is the regularity of the perturbation, here the analyticity which causes the exponential decay of the Fourier coefficients, that forces the condition~\eqref{condBessi}. If the perturbation is assumed to be only of class $C^r$ for some $r \in \N$, then~\eqref{condBessi} can be weakened to cover frequencies $\omega$ which are Diophantine with an exponent $\tau$ which is related to $r$ (this can be obtained from Bessi's work; one can find a better quantitative result in~\cite{CW13}, which also uses ideas of~\cite{Bes00}).

Here we would like to consider the case where the perturbation is $\alpha$-Gevrey; we will consider a slight modification of the family of Hamiltonians~\eqref{HamBessi} to a family of Hamiltonians~\eqref{HamBessiG} depending on $\alpha \geq 1$, which are still analytic but for which the perturbation are bounded and small only in a $\alpha$-Gevrey norm.

First we need to compute the Gevrey of the function $P_k(\theta):=\cos(k\cdot \theta)$ for an arbitrary $k \in \Z^n$. Using the fact that $(l+1)^2 \leq 4^l$ for any $l \in \N$ we have
\begin{equation*}
|P_k|_{\alpha,s}=c\sup_{l \in \N}\frac{(l+1)^2s^{\alpha l} |k|^l}{l!^\alpha} \leq c \sup_{l \in \N}\frac{(4|k|s^\alpha)^l}{l!^\alpha} \leq c \left(\sup_{l \in \N}\frac{((4|k|)^{1/\alpha}s)^l}{l!}\right)^{\alpha} \leq c e^{s\alpha(4|k|)^{\frac{1}{\alpha}}}.
\end{equation*}
Now, given $\alpha \geq 1$, we introduce the following condition on the non-resonant vector $\omega \in \R^n$: there exists $s_0>0$ and a sequence $k_j \in \Z^n$ with $|k_j| \rightarrow +\infty$ as $j \rightarrow +\infty$ for which
\begin{equation}\label{condBessiG}
0<|k_j\cdot \omega| \leq e^{-s_0\alpha(4|k_j|)^\frac{1}{\alpha}}\tag{$C_{\alpha,s_0}$}.
\end{equation}
For $\alpha=1$, this condition reduces to~\eqref{condBessi}. Then we consider the following modified sequence of Hamiltonians, which once again corresponds exactly to~\ref{HamBessi} for $\alpha=1$:
\begin{equation}\label{HamBessiG}\tag{$H_{\alpha,j,s}$}
\begin{cases}
H^{\alpha,j}_{\varepsilon,\mu}(\theta,I):=\frac{1}{2}I\cdot I+F^{\alpha,j}_{\varepsilon,\mu}(\theta), \; (\theta,I) \in \R^{n}/(2\pi\Z^n) \times \R^n \\
F^{\alpha,j}_{\varepsilon,\mu}(\theta):=\varepsilon\nu_{\alpha,j,s}(1-\cos(k_j\cdot\theta))(1+\mu\tilde{\nu}_{\alpha,j,s}\cos(\tilde{k}_j\cdot\theta)) \\
0<\varepsilon \leq 1, \; 0 < \mu \leq 1, \; \nu_{\alpha,j,s}:=e^{-s\alpha(4|k_j|)^{\frac{1}{\alpha}}}, \; \tilde{\nu}_{\alpha,j,s}:=e^{-s\alpha(4|\tilde{k}_j|)^{\frac{1}{\alpha}}}.  
\end{cases}
\end{equation}
With these choices of $\nu_{\alpha,j,s}$ and $\tilde{\nu}_{\alpha,j,s}$ we have that, for all $j \in \N $ and all $0 \leq \mu \leq 1$:
\[ |F^{\alpha,j}_{\varepsilon,\mu}|_{s,\alpha} \leq C\varepsilon \]
for some constant $C>1$ independent of $\varepsilon$ and $\mu$. The argument of Bessi goes exactly the same of way for this family of Hamiltonians~\eqref{HamBessiG} under the condition~\eqref{condBessiG}, and thus we have the following statement.

\begin{theorem}\label{BessiG}
Assume that $\omega \in \R^n$ satisfy~\eqref{condBessiG}. Then, if $s_0> s$, for any $0 \leq \varepsilon \leq 1$, there exists $\mu_\varepsilon>0$ and $j_\varepsilon \in \N$ such that for any $0<\mu \leq \mu_\varepsilon$ and any $j \geq j_\varepsilon$, the Hamiltonian system defined in~\eqref{HamBessiG} does not have any invariant torus $\mathcal{T}$ satisfying
\begin{itemize}
\item[$(i)$] $\mathcal{T}$ projects diffeomorphically to $\T^n$;
\item[$(i)$] There is a $C^1$ diffeomorphism between $\T^n$ and $\mathcal{T}$ which conjugates the flow on $\mathcal{T}$ to the linear flow on $\T^n$ of frequency $\omega$. 
\end{itemize}
\end{theorem}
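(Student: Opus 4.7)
The proof will proceed by contradiction, following Bessi's variational argument~\cite{Bes00} with essentially notational adjustments. For each fixed $j$, the Hamiltonian $H^{\alpha,j}_{\varepsilon,\mu}$ is still an analytic trigonometric polynomial plus a quadratic kinetic part; only the \emph{sizes} of its two non-trivial Fourier coefficients have been rescaled through $\nu_{\alpha,j,s}=e^{-s\alpha(4|k_j|)^{1/\alpha}}$ so that the perturbation is small in the $(\alpha,s)$-Gevrey norm rather than in the analytic norm. Consequently, Bessi's variational framework (Mather theory for a mechanical Lagrangian) applies verbatim to each individual $H^{\alpha,j}_{\varepsilon,\mu}$, and the proof amounts to redoing Bessi's estimates with every occurrence of $e^{-s|k_j|}$ replaced by $\nu_{\alpha,j,s}$.

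I would first recall that if a torus $\mathcal{T}$ satisfying $(i)$--$(ii)$ exists, then the Haar measure on $\T^n$ pushes forward to an invariant probability measure on $\mathcal{T}$ with rotation vector $\omega$, which by standard Mather theory for $L(\theta,\dot\theta)=\tfrac12|\dot\theta|^2-F^{\alpha,j}_{\varepsilon,\mu}(\theta)$ must be action-minimizing among all invariant measures with that rotation vector. Following Bessi, I would then construct an explicit competitor by deforming a minimizing trajectory along the direction $\tilde k_j$. The bi-resonant structure of $F^{\alpha,j}_{\varepsilon,\mu}$ --- the product of a deep well $\varepsilon\nu_{\alpha,j,s}(1-\cos(k_j\cdot\theta))$ and a modulation $1+\mu\tilde{\nu}_{\alpha,j,s}\cos(\tilde k_j\cdot\theta)$, with $\tilde k_j\perp k_j$ and $|\tilde k_j\cdot\omega|\gtrsim|\tilde k_j|$ --- produces an asymmetry along $\tilde k_j$ that the competitor exploits to strictly lower the action.

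The heart of the argument is then the quantitative comparison of actions. On the gain side, the secondary cosine factor contributes a strictly positive quantity of order $\mu\,\tilde{\nu}_{\alpha,j,s}$. On the cost side, the obstruction to displacing the minimizer along $k_j$ is controlled by a positive power of $|k_j\cdot\omega|/\nu_{\alpha,j,s}$, which by~\eqref{condBessiG} satisfies
\[ \frac{|k_j\cdot\omega|}{\nu_{\alpha,j,s}} \;\leq\; e^{-(s_0-s)\alpha(4|k_j|)^{1/\alpha}} \xrightarrow[j\to\infty]{} 0, \]
exactly as the analogous ratio $e^{-(s_0-s)|k_j|}$ does in Bessi's analytic case. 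All other estimates in~\cite{Bes00} depend only on the ratios $\nu_{\alpha,j,s_0}/\nu_{\alpha,j,s}$, $\tilde{\nu}_{\alpha,j,s}$, $\mu$, $\varepsilon$, so choosing $j_\varepsilon$ sufficiently large and $\mu_\varepsilon$ sufficiently small as functions of $\varepsilon$, exactly as in~\cite{Bes00}, forces the gain to strictly exceed the cost, contradicting the minimality of $\mathcal{T}$.

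The main obstacle is therefore essentially clerical: one must verify that every exponent of the form $-s|k|$ appearing in Bessi's estimates is consistently replaced by $-s\alpha(4|k|)^{1/\alpha}$ and that the only structural property of the weights used throughout --- strict monotonicity in $s$ --- is preserved. Since $s\mapsto\nu_{\alpha,j,s}$ is indeed strictly decreasing and condition~\eqref{condBessiG} was defined precisely so that $|k_j\cdot\omega|/\nu_{\alpha,j,s}\to 0$ whenever $s<s_0$, no new analytical input beyond this bookkeeping is required.
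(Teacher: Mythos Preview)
Your proposal is correct and takes the same approach as the paper: both observe that each $H^{\alpha,j}_{\varepsilon,\mu}$ is still an analytic Hamiltonian of Bessi's exact form, only with the coefficients $\nu_{1,j,s},\tilde\nu_{1,j,s}$ replaced by $\nu_{\alpha,j,s},\tilde\nu_{\alpha,j,s}$, and that the sole structural property used---that $|k_j\cdot\omega|/\nu_{\alpha,j,s}\to 0$ when $s<s_0$---is guaranteed by~\eqref{condBessiG}. In fact you give considerably more detail than the paper, which simply asserts in one sentence that ``the argument of Bessi goes exactly the same way for this family of Hamiltonians under the condition~\eqref{condBessiG}'' and states the theorem without further proof.
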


Now Theorem~\ref{BessiG} implies Theorem~\ref{destruction}, as if $\omega$ satisfies~\eqref{conddestruction}, then it satisfies~\eqref{condBessiG} for some $s_0>0$ and it is sufficient to consider a Hamiltonian system as in~\eqref{HamBessiG} with $s<s_0$.     

\appendix

\section{On the $\alpha$-Bruno-R{\"u}ssmann condition}
\label{app1}

Recall from Section~\ref{sec:aBR} that the arithmetic function on
$[1,+\infty)$ associated with a vector $\omega \in \R^n$ is given by
$$\Psi_\omega(Q) = \max \left\{|k \cdot \omega|^{-1}, \; k \in
  \Z^n, \; 0 < |k| \leq Q \right\}$$
and that we have introduced a continuous version $\Psi$ which satisfies~\eqref{foncpsi}. We have further defined $\Delta$ by \(\Delta(Q) = Q\Psi(Q)\) and its functional inverse $\Delta^{-1}$.

Also recall that, by definition, if $\alpha\geq 1$,
$\mathrm{A}_\alpha$ consists of those vectors $\omega$ for which
\begin{equation*}
  \int_{\Delta
    (1)}^{+\infty}\frac{dx}{x(\Delta^{-1}(x))^{\frac{1}{\alpha}}} <
  \infty, \tag{$\mathrm{A}_\alpha$}
\end{equation*}  
whereas $\mathrm{BR}_\alpha$ consists of vectors $\omega$ satisfying
the $\alpha$-Bruno-R{\"u}ssmann condition:
\begin{equation*}
  \int_{1}^{+\infty}\frac{\ln(\Psi_\omega(Q))}{Q^{1+\frac{1}{\alpha}}}dQ <
  \infty \Leftrightarrow \int_{1}^{+\infty}\frac{\ln(\Psi(Q))}{Q^{1+\frac{1}{\alpha}}}dQ <
  \infty. \tag{$\mathrm{BR}_\alpha$} 
\end{equation*} 
In the proof of Theorem~\ref{KAMparameter}, we use the following fact. 

\begin{proposition}
  \label{lemmeapp}%
 For any $\alpha \geq 1$, $\mathrm{A}_\alpha = \mathrm{BR}_\alpha$.
\end{proposition}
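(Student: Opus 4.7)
The plan is to show the two integrals are essentially related by an integration by parts after the natural change of variable $x = \Delta(Q) = Q\Psi(Q)$. Since $\Psi$ is continuous and non-decreasing on $[1,+\infty)$ with $\Psi(Q) \to +\infty$, the map $\Delta$ is a continuous, strictly increasing homeomorphism $[1,+\infty) \to [\Delta(1),+\infty)$, so this change of variable is legitimate in the Stieltjes sense. From the identity $\ln \Delta(Q) = \ln Q + \ln \Psi(Q)$, regarded as an equality of Stieltjes measures, one obtains
\[
\int_{\Delta(1)}^{+\infty} \frac{dx}{x\,(\Delta^{-1}(x))^{1/\alpha}}
= \int_1^{+\infty} \frac{d\Delta(Q)}{\Delta(Q)\,Q^{1/\alpha}}
= \int_1^{+\infty} \frac{dQ}{Q^{1+1/\alpha}} + \int_1^{+\infty} \frac{d(\ln \Psi(Q))}{Q^{1/\alpha}}.
\]
The first integral on the right is equal to $\alpha$, so $\omega \in \mathrm{A}_\alpha$ if and only if the Stieltjes integral $I(\omega) := \int_1^{+\infty} Q^{-1/\alpha}\, d(\ln \Psi(Q))$ is finite.

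Next I apply the integration by parts formula for Stieltjes integrals, which is valid here since $Q \mapsto Q^{-1/\alpha}$ is $C^1$ and $\ln \Psi$ is continuous and non-decreasing. For every $A > 1$,
\[
\int_1^A \frac{d(\ln \Psi(Q))}{Q^{1/\alpha}}
= \frac{\ln \Psi(A)}{A^{1/\alpha}} - \ln \Psi(1)
+ \frac{1}{\alpha}\int_1^A \frac{\ln \Psi(Q)}{Q^{1+1/\alpha}}\,dQ.
\]
Assuming without loss of generality that $\Psi(Q) \geq 1$ past some $Q_1$ (otherwise one shifts the lower limit), all three terms on the right are non-negative for $A$ large. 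Consequently, finiteness of $I(\omega)$ is equivalent to the simultaneous boundedness as $A \to +\infty$ of $\frac{\ln \Psi(A)}{A^{1/\alpha}}$ and of the $\mathrm{BR}_\alpha$ integral up to $A$.

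The direction $\mathrm{A}_\alpha \subseteq \mathrm{BR}_\alpha$ is then immediate from this identity, since both boundary and integral terms are non-negative and their sum is controlled by $I(\omega)$. For the reverse inclusion, the key observation, which I expect to be the only non-formal step, is that the $\mathrm{BR}_\alpha$ condition forces $A^{-1/\alpha}\ln \Psi(A) \to 0$. This uses the monotonicity of $\Psi$: for any $A \geq 1$,
\[
\int_A^{2A} \frac{\ln \Psi(Q)}{Q^{1+1/\alpha}}\,dQ
\geq \ln \Psi(A) \int_A^{2A} \frac{dQ}{Q^{1+1/\alpha}}
= \alpha(1 - 2^{-1/\alpha})\,\frac{\ln \Psi(A)}{A^{1/\alpha}},
\]
and the left-hand side is the tail of a convergent integral, hence tends to zero. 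Feeding this back into the integration by parts formula then shows that $I(\omega) < \infty$, i.e.\ $\omega \in \mathrm{A}_\alpha$, and completes the proof of Proposition~\ref{lemmeapp}.
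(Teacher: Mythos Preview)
Your proof is correct and follows essentially the same route as the paper: change variables $x=\Delta(Q)$, perform a Riemann--Stieltjes integration by parts, and control the boundary term via the monotonicity of $\Psi$ (or $\Delta$). The only cosmetic differences are that the paper keeps $\ln\Delta(Q)=\ln Q+\ln\Psi(Q)$ together throughout rather than splitting off the harmless $\int Q^{-1-1/\alpha}\,dQ$ first, and it bounds the boundary term by the full tail $\int_T^{\infty}\frac{\ln\Delta(Q)}{Q^{1+1/\alpha}}\,dQ$ (yielding the exact identity $a_\alpha=b_\alpha/\alpha$) instead of the interval $[A,2A]$; neither change affects the argument.
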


\begin{proof}
  We aim at showing that the two integrals
  \[\int^{+\infty}\frac{dx}{x(\Delta^{-1}(x))^{\frac{1}{\alpha}}}
  \quad \mbox{and} \quad
  \int^{+\infty}\frac{\ln(\Psi(Q))}{Q^{1+\frac{1}{\alpha}}} \, dQ \]
  converge or diverge simultaneously. Equivalently, choosing for
  example $t=\Delta^{-1}(1)$ and, in the first integral, making the
  change of variable $x=\Delta(Q)$, we may compare the following two
  quantities:
  \[a_\alpha = \int_t^\infty \frac{d\Delta(Q)}{Q^{\frac{1}{\alpha}}
    \Delta(Q)} \quad \mbox{and} \quad b_\alpha = \int_t^\infty
  \frac{\ln \Delta(Q)}{Q^{1+\frac{1}{\alpha}}}\, dQ.\]

  A (Riemann-Stieltjes) integration by part shows that, if $T>1$,
  \begin{equation}
    \label{eq:ipp}
    \int_t^T \frac{d\Delta(Q)}{Q^{\frac{1}{\alpha}} \Delta(Q)} =
    \frac{\ln \Delta(T)}{T^{\frac{1}{\alpha}}} + \frac{1}{\alpha}
    \int_t^T \frac{\ln \Delta(Q)}{Q^{1+\frac{1}{\alpha}}}\, dQ.
  \end{equation}
  On the one hand, the two integrals in this equality have a (possibly infinite) limit as
  $T$ tends to $+\infty$, and $\frac{\ln
    \Delta(T)}{T^{\frac{1}{\alpha}}} \geq 0$, thus \eqref{eq:ipp}
  yields, as $T$ tends to infinity,
  \begin{equation*}
    a_\alpha \geq \frac{b_\alpha}{\alpha}.
  \end{equation*}
  On the other hand, since $\Delta$ is increasing,
  \[\frac{\ln \Delta(T)}{T^{\frac{1}{\alpha}}}  = \frac{\ln
    \Delta(T)}{\alpha} \int_T^{+\infty}
  \frac{dQ}{Q^{1+\frac{1}{\alpha}}} \leq \frac{1}{\alpha}
  \int_T^{+\infty} \frac{\ln \Delta(Q)}{Q^{1+\frac{1}{\alpha}}}
  \, dQ \]
  so, letting $T$ tend to $+\infty$,  \eqref{eq:ipp} entails
  \begin{equation*}
    a_\alpha \leq \frac{b_\alpha}{\alpha}.
  \end{equation*}
  So, $a_\alpha =\frac{b_\alpha}{\alpha}$, whence the conclusion.
\end{proof}

\begin{remark}\label{rem}
From the proof above, one easily see that if $\omega \in \mathrm{BR}_\alpha$, then
\[ \lim_{Q \rightarrow +\infty} \frac{\ln(\Delta(Q))}{Q^{1/\alpha}}=0.\]
But $\ln(\Delta(Q))=\ln(Q\Psi(Q))=\ln Q + \ln(\Psi(Q))$ and therefore
\[ \lim_{Q \rightarrow +\infty} \frac{\ln(\Psi(Q))}{Q^{1/\alpha}}=0\]
which means that $\omega$ satisfies~\ref{coho}.
\end{remark}

We refer to~\cite[Proposition~2.2]{BF13} for related, more precise
results. In the next lemma, we give alternative characterizations of
the $\alpha$-BR condition, so as to facilitate comparisons with
other arithmetic conditions.


\begin{lemma}
  \label{lm:comparison}%
  Let $\alpha \geq 1$, $\beta = 1 + \frac{1}{\alpha}$ and
  $\omega \in \R^n$ non-resonant. The following conditions are
  equivalent to each other:
  \begin{enumerate}
  \item \(\int_1^{+\infty} \frac{\ln
      \Psi_\omega(Q)}{Q^\beta} \, dQ < \infty\) 
  \item \(\sum_{Q\geq1} \frac{\ln\Psi_\omega(Q)}{Q^\beta} <
    \infty\) 
  \item \(\sum_{l\geq 0} \frac{\ln\Psi_\omega(2^l)}{2^{l/\alpha}} < \infty\)
  \item if $n=2$ and $\omega = (1,\nu)$, 
    \(\sum_{k\geq 1} \frac{\ln q_{k+1}}{q_k^{1/\alpha}} < \infty,\)
    where the $q_k$'s are the main denominators of the continued
    fraction of $\nu$.
  \end{enumerate}
\end{lemma}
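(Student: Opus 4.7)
The plan is to establish, in order, (1)$\Leftrightarrow$(2), then (2)$\Leftrightarrow$(3), and finally (2)$\Leftrightarrow$(4) when $n=2$. The first equivalence is a routine integral-vs-series comparison exploiting monotonicity: since $\Psi_\omega$ is non-decreasing, on each unit interval $[N,N+1]$ the integrand $\ln\Psi_\omega(Q)/Q^\beta$ is squeezed between $\ln\Psi_\omega(N)/(N+1)^\beta$ and $\ln\Psi_\omega(N+1)/N^\beta$, so the integral and the series differ only by a bounded multiplicative constant.

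For (2)$\Leftrightarrow$(3) I would use Cauchy condensation on dyadic blocks $B_l=\{n:2^l\leq n<2^{l+1}\}$. On $B_l$ one has $n^{-\beta}\asymp 2^{-l\beta}$ and $\Psi_\omega(2^l)\leq\Psi_\omega(n)\leq\Psi_\omega(2^{l+1})$; since $|B_l|=2^l$ and $2^l\cdot 2^{-l\beta}=2^{-l/\alpha}$, summing over the block gives
\[
c\,\frac{\ln\Psi_\omega(2^l)}{2^{l/\alpha}}\;\leq\;\sum_{n\in B_l}\frac{\ln\Psi_\omega(n)}{n^\beta}\;\leq\;C\,\frac{\ln\Psi_\omega(2^{l+1})}{2^{l/\alpha}},
\]
and summing over $l$ yields a two-sided comparison between the series (2) and (3).

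For (2)$\Leftrightarrow$(4) I would specialize to $n=2$, $\omega=(1,\nu)$, and invoke the best-approximation property of continued fractions: $|q_k\nu-p_k|\asymp 1/q_{k+1}$, with the minimum of $|k\cdot\omega|$ over $0<|k|\leq Q$ realized by convergents. Hence $\Psi_\omega(Q)\asymp q_{k+1}$ whenever $q_k\leq Q<q_{k+1}$, and splitting the series in (2) along those blocks together with the integral approximation $\sum_{q_k\leq Q<q_{k+1}}Q^{-\beta}\asymp q_k^{-1/\alpha}-q_{k+1}^{-1/\alpha}$ yields
\[
\sum_{Q\geq 1}\frac{\ln\Psi_\omega(Q)}{Q^\beta}\;\asymp\;\sum_{k\geq 1}\ln(q_{k+1})\left(\frac{1}{q_k^{1/\alpha}}-\frac{1}{q_{k+1}^{1/\alpha}}\right).
\]
The inequality $q_k^{-1/\alpha}-q_{k+1}^{-1/\alpha}\leq q_k^{-1/\alpha}$ gives (4)$\Rightarrow$(2) at once. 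For (2)$\Rightarrow$(4), I would split indices into the \emph{fast} case $q_{k+1}\geq 2q_k$, where $q_k^{-1/\alpha}-q_{k+1}^{-1/\alpha}\geq(1-2^{-1/\alpha})q_k^{-1/\alpha}$ reproduces the summand of (4) up to a constant, and the \emph{slow} case $q_{k+1}<2q_k$, where $\ln q_{k+1}\leq\ln 2+\ln q_k$ and the contribution is dominated by $\sum_k(\ln q_k)/q_k^{1/\alpha}$. The main obstacle is handling this slow case: one needs the unconditional convergence of $\sum_k(\ln q_k)/q_k^{1/\alpha}$, which relies on the geometric (Fibonacci-type) growth $q_k\geq\phi^{k-1}$ coming from the recursion $q_{k+1}=a_{k+1}q_k+q_{k-1}\geq q_k+q_{k-1}$ — a feature specific to continued fractions that is not visible at the level of $\Psi_\omega$ alone.
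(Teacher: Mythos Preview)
Your proposal is correct and follows essentially the same route as the paper: integral--series comparison for (1)$\Leftrightarrow$(2), Cauchy condensation on dyadic blocks for (2)$\Leftrightarrow$(3), and splitting along continued-fraction blocks $[q_k,q_{k+1})$ for (2)$\Leftrightarrow$(4), with the geometric growth $q_k\geq\phi^{k-1}$ supplying the needed unconditional convergence. Two minor differences in execution: for (1)$\Leftrightarrow$(2) the paper exploits that $\Psi_\omega$ is actually \emph{constant} on each $[Q,Q+1)$ (not merely monotone), yielding an exact identity rather than a two-sided bound; and for (2)$\Rightarrow$(4) the paper bypasses your fast/slow dichotomy by directly adding back the unconditionally convergent series $\sum_k \ln q_{k+1}/q_{k+1}^{1/\alpha}$ to the telescoped lower bound --- a slightly cleaner bookkeeping than your case split, though both arguments rest on the same Fibonacci-type growth.
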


In the case $\alpha=1$, the equivalence $(1 \Leftrightarrow 2)$ is
proved in \cite{Rus01}, whereas
$(2 \Leftrightarrow 3 \Leftrightarrow 4)$ is proved in in~\cite{GM10}.

\begin{proof}
  $(1 \Leftrightarrow 2)$ As already noticed in Section~\ref{sec:aBR},
  $\Psi_\omega$ is constant on intervals of the form $[Q,Q+1)$,
  $Q \in \N_*$.  So,
  \[\int_1^\infty \frac{\ln \Psi_\omega(Q)}{Q^\beta} \, dQ =
  \sum_{Q=1}^{+\infty} \ln \Psi_\omega(Q) \int_{Q}^{Q+1} q^{-\beta} \, dq =
  \alpha \sum_{Q \geq 1} \ln \Psi_\omega(Q) \left(Q^{-1/\alpha} -
    (Q+1)^{-1/\alpha}\right).\]
  Since the general term of this series is positive and
  $Q^{-1/\alpha} - (Q+1)^{-1/\alpha} \sim_{Q \to +\infty} Q^{-\beta}$,
  the first two conditions are equivalent to one another.

  $(2 \Leftrightarrow 3)$ We want to show that
  $f_2 = \sum_{Q \geq 1} \frac{\ln \Psi_\omega(Q)}{Q^\beta}$ and
  $f_3 = \sum_{l\geq 0} \frac{\ln\Psi_\omega(2^l)}{2^{l/\alpha}}$ are
  simultaneously finite or infinite. Since $\Psi_\omega$ is non-decreasing,
  \[f_2 \geq \sum_{l\geq 0} \ln \Psi_\omega(2^l) \sum_{2^l \leq Q \leq
    2^{l+1}-1} \frac{1}{Q^\beta},\]
  where
  \[\sum_{2^l \leq Q \leq 2^{l+1}-1} \frac{1}{Q^\beta} \geq
  \int_{2^l}^{2^{l+1}} \frac{dQ}{Q^\beta} \geq
  \frac{\alpha}{2} \, \frac{1}{2^{l/\alpha}},\]
  so that
  \[f_2 \geq \frac{\alpha}{2} f_3.\]
  
  Conversely, again using the fact that $\Psi_\omega$ is non-decreasing, 
  \[f_2 \leq \sum_{l\geq 0} \ln \Psi_\omega(2^{l+1}) \sum_{2^l \leq Q \leq
    2^{l+1}-1} \frac{1}{Q^\beta},\]
  where
  \[\sum_{2^l \leq Q \leq 2^{l+1}-1} \frac{1}{Q^\beta} \leq \frac{1}{2^{l\beta}}+
  \int_{2^l}^{2^{l+1}-1} \frac{dQ}{Q^\beta} \leq
  \, \frac{1}{2^{l/\alpha}}\left(\frac{1}{2^l}+\alpha\right) \leq
  \frac{1+\alpha}{2^{l/\alpha}}=\frac{2^{1/\alpha}(1+\alpha)}{2^{(l+1)/\alpha}}\]
  so that \[f_2 \leq 2^{1/\alpha}(1+\alpha) f_3.\]

  $(2 \Leftrightarrow 4)$ Now assume $n=1$ and $\omega = (1,\nu)$,
  $\nu \in \R$. Let $(p_k/q_k)$ be the sequence of convergents of
  $\nu$: one has then the following explicit expression
  $$\Psi_\omega(Q)=|q_k\nu-p_k|^{-1}, \quad q_k \leq Q < q_{k+1}$$
as was proved in~\cite{BF13}, Proposition 2.7.   
  Hence,
  \[
  S :=\sum_{Q \geq 1} \frac{\ln(\Psi_\omega(Q))}{Q^\beta}= \sum_{k\geq 1} \left(\ln |q_k\nu-p_k|^{-1} \sum_{Q =
      q_k}^{q_{k+1}-1} \frac{1}{Q^\beta}\right). 
  \]
  Bounding the first factor by
  \[ \ln q_{k+1} \leq \ln |q_k\nu-p_k|^{-1} \leq \ln (2q_{k+1}) \]
  and the second one by
  \[\frac{\alpha}{q_k^{1/\alpha}} - \frac{\alpha}{q_{k+1}^{1/\alpha}} \leq
  \sum_{Q = q_k}^{q_{k+1}-1} \frac{1}{Q^\beta} \leq
  \frac{1+\alpha}{q_k^{1/\alpha}}\]
  (obtained by comparing the series with an integral)
  yields
  \begin{equation*}
    \alpha \sum_{k\geq 1} \left( \frac{\ln q_{k+1}}{q_k^{1/\alpha}} -
      \frac{\ln q_{k+1}}{q_{k+1}^{1/\alpha}} \right)  \leq S \leq
    (1+\alpha) \sum_{k\geq 1} \frac{\ln (2q_{k+1})}{q_k^{1/\alpha}}=(1+\alpha) \sum_{k\geq 1}\left( \frac{\ln 2}{q_k^{1/\alpha}}+\frac{\ln (q_{k+1})}{q_k^{1/\alpha}}\right).
  \end{equation*}
Since the sequence $(q_k)_{k \geq 0}$ increases at least geometrically, namely $q_k \geq 2^{k/2}$ for any $k \geq 0$, we have
\[ \sum \frac{\ln q_{k+1}}{q_{k+1}^{1/\alpha}} < \infty, \quad \sum \frac{1}{q_k^{1/\alpha}} < \infty   \] 
and therefore
\[ S< \infty \Leftrightarrow \sum \frac{\ln q_{k+1}}{q_k^{1/\alpha}} < \infty.\]
\end{proof}

\begin{example}
  \label{ex:BRa}%
  Let $\nu =\sum_{n\geq 1} 10^{-n!}$. Recall from \cite[Theorem
  30]{K63} that if $p/q \in \Q$ satisfies
  \[| p - q\nu | < \frac{1}{2q},\]
  then $p/q$ is a best approximation of $\nu$.  So, we see that
  $\sum_{1 \leq n \leq Q} 10^{-n!}$ are best approximations, and thus
  $q_n = 2^{n!}$. The 4th criterion of Lemma~\ref{lm:comparison} thus
  tells us that $\omega = (1,\nu) \in \mathrm{BR}_\alpha$ for all
  $\alpha \geq 1$, while $\omega$ is not Diophantine.
\end{example}

\section{Gevrey estimates}\label{app2}

Let us start by recalling some notations and definitions. Given an
integer $m \geq 1$ and $k=(k_1,\dots,k_m) \in \N^m$, we define
\[ |k|=\sum_{i=1}^m k_i, \quad k!=\prod_{i=1}^m k_i!. \]
Given $x \in \R^m$, we set
\[ x^k=\prod_{i=1}^m x_i^{k_i}. \]

Let $K$ be a compact set of the form
\[K = \T^{m_1} \times \bar B^{m_2}, \quad m_1 + m_2 = m,\]
where $\bar B^{m_2}$ is the closure of an open subset $B^{m_2}$ of
$\R^{m_2}$. Let $f : K \rightarrow \R$ be a smooth function, meaning
that $f$ extends smoothly to an open neighborhood of $K$. Such an
extension is by no means unique, but note that, by continuity, the
partial derivatives of $f$ over $K$, at any order, do not depend on
the extension. For $a \in K$ and $k \in \N^m$ we set
\begin{equation}
  \partial^k f(a)= \partial_{x_1}^{k_1} \cdots \partial_{x_m}^{k_m} f(a).
\end{equation}

Given real numbers $\alpha \geq 1$ and $s>0$, the function $f$ is said
to be $(\alpha,s)$-Gevrey if
\begin{equation}\label{defnorm}
|f|_{\alpha,s} := c\sup_{a \in K}\left(\sup_{k \in \N^m}
  \frac{(|k|+1)^2s^{\alpha|k|}|\partial^k
    f(a)|}{|k|!^\alpha}\right)<\infty, \quad c:=4\pi^2/3. 
\end{equation}
The space of such functions will be denoted by $G_{\alpha,s}(K)$, and
equipped with the above norm, it is a Banach space. Our definition of
Gevrey norm is not quite standard, but up to decreasing or increasing
the parameter $s$, it is comparable to the Gevrey norms that have been
used in Hamiltonian perturbation theory (as for instance
in~\cite{MS02} or in~\cite{Pop04}). On the one hand, the role of the
factor $(|k|+1)^2$ is to simplify the estimates for the product and
composition of Gevrey functions (see respectively
Lemmas~\ref{lemmeprod} and~\ref{lemmecomp}). On the other hand, the
role of the normalizing constant $c>0$ in the definition is to ensure
that $G_{\alpha,s}(K)$ is a Banach algebra (see
Lemma~\ref{lemmeprod}).

The above definition can be extended to vector-valued functions
$f=(f_i)_{1 \leq i \leq p} : K \rightarrow \R^p$ for $p \geq 1$ by
setting
\[ |\partial^k f(a)|:=\sum_{1 \leq i \leq p}|\partial^k f_i(a)|, \quad
a \in K \]
in~\eqref{defnorm}. The space of such vector-valued functions is still
a Banach space with the above norm, and it will be denoted by
$G_{\alpha,s}(K,\R^p)$. The case of matrix-valued functions, say with
values in the space $M_{m,p}(\R)$ of matrix with $m$ rows and $p$
columns, is reduced to the case of vector-valued functions by simply
identifying $M_{m,p}(\R)$ to $\R^{mp}$.

\subsection{Majorant series and Gevrey functions}

The definition of Gevrey functions can be conveniently reformulated in terms of majorant series with one variable (see \cite{Kom79}, \cite{Kom80} and also \cite{SCK03}). 

But first let us consider a formal power series in $m$ variables $X=(X_1,\dots,X_m)$ with coefficients in a normed real vector space $(E,|\,.\,|_E)$, which is a formal sum of the form
\[ A(X)=\sum_{k \in \N^m}A_k X^k, \quad A_k=A_{k_1,\dots,k_m} \in E.  \]
Such a formal series is said to be majorized by another formal power series with real non-negative coefficients
\[ B(X)=\sum_{k \in \N^m}B_k X^k, \quad B_k=B_{k_1,\dots,k_m} \in \R,  \]
and we write $A \ll B$, if
\begin{equation}\label{major1}
|A_k|_E \leq B_k, \quad \forall \, k \in \Z^m.
\end{equation}
Next, following~\cite{SCK03}, we introduce a notion of a smooth
function being majorized by a formal power series in one variable. So
let $f : K \rightarrow \R^p$ be a smooth function, and $F$ be a formal
power series in one variable with non-negative coefficients, that we
shall write as
\[ F(X)=\sum_{l=0}^{+\infty}\frac{F_l}{l!}X^l. \]
We will say that $f$ is majorized by $F$ on $K$, and we will write
$f \ll_K F$ (or $f(x) \ll_K F(X)$), if for all $a \in K$ and all
$k \in \N^m$, we have
\begin{equation}\label{major2}
|\partial^k f(a)| \leq F_{|k|}. 
\end{equation}  
To better understand this definition, recall that given
$f : K \rightarrow \R^p$ and $a \in K$, we can define its formal
Taylor series at $a$ by
\[ T_af(X):=\sum_{k \in \N^m}\frac{\partial^kf(a)}{k!} X^k, \]
which is a formal power series in $m$ variables that takes values in
$\R^p$. To a formal series $F$ in one variable, one can associate a
formal series $\hat{F}$ in $m$ variables simply by setting
\[ \hat{F}(X_1,\dots,X_m):=F(X_1+\cdots+X_m). \]
If is then easy to check that $f \ll_K F$, in the sense
of~\eqref{major2}, if and only if for all $a \in K$,
$T_af \ll \hat{F}$, in the sense of~\eqref{major1} (with $E=\R^p$ and
$|\,.\,|_E$ the norm given by the sum of the absolute values of the
components).

Now, given $\alpha \geq 1$ and $s>0$, let us define the following formal power series in one variable
\begin{equation}\label{defM}
M_{\alpha,s}(X):=c^{-1}\sum_{l=0}^{+\infty}\frac{l!^{\alpha-1}}{(l+1)^2} \left(\frac{X}{s^\alpha}\right)^l=c^{-1}\sum_{l=0}^{+\infty}\frac{M_l}{l!}X^l, \quad M_l=\frac{l!^{\alpha}}{(l+1)^2 s^{\alpha l}}, \quad c=4\pi^2/3. 
\end{equation}
The following characterization of Gevrey functions is evident from the definitions~\eqref{defnorm} and~\eqref{major2}.

\begin{proposition}\label{propnorme}
  If $f : K \to \R^p$ is a smooth function,
  \[|f|_{\alpha,s} = \inf \left\{ C \in [0,+\infty]\, | \; f \ll_K C
  M_{\alpha,s}\right\}.\]
\end{proposition}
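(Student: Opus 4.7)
The plan is to prove this by directly unwinding both definitions and observing that the two conditions describe exactly the same set of admissible constants $C$.

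First I would expand $CM_{\alpha,s}$ in the form $\sum_{l \geq 0} \frac{F_l}{l!} X^l$ appearing in the definition of majorization. From the definition~\eqref{defM} of $M_{\alpha,s}$, one reads off that
\[ F_l = C c^{-1} M_l = C c^{-1} \frac{l!^{\alpha}}{(l+1)^2 s^{\alpha l}}. \]
By the definition~\eqref{major2} of $\ll_K$, the condition $f \ll_K C M_{\alpha,s}$ is therefore equivalent to
\[ |\partial^k f(a)| \leq C c^{-1} \frac{|k|!^{\alpha}}{(|k|+1)^2 s^{\alpha |k|}}, \quad \forall\, a \in K,\; \forall\, k \in \N^m. \]

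The key observation is then that this inequality rearranges to
\[ c \frac{(|k|+1)^2 s^{\alpha |k|} |\partial^k f(a)|}{|k|!^{\alpha}} \leq C, \quad \forall\, a \in K,\; \forall\, k \in \N^m, \]
and, taking the supremum over $(a,k) \in K \times \N^m$, this holds if and only if $|f|_{\alpha,s} \leq C$ (by the very definition~\eqref{defnorm} of the norm). Thus the set
\[ \mathcal{C} := \{ C \in [0,+\infty] \mid f \ll_K C M_{\alpha,s} \} \]
is exactly $[|f|_{\alpha,s}, +\infty]$ (interpreted as $\{+\infty\}$ when $|f|_{\alpha,s}=+\infty$), and its infimum is $|f|_{\alpha,s}$.

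There is essentially no obstacle here beyond bookkeeping of constants: the normalizing factor $c$ in~\eqref{defnorm} is precisely the prefactor $c^{-1}$ in the definition~\eqref{defM} of $M_{\alpha,s}$, so the two cancel and the equality is exact (not merely an equivalence of norms). The only minor point worth flagging is the degenerate case $|f|_{\alpha,s}=+\infty$: then no finite $C$ satisfies $f \ll_K C M_{\alpha,s}$, so the infimum in the statement is $+\infty$ and the identity still holds under the convention $\inf \emptyset = +\infty$ (or, equivalently, by allowing $C = +\infty$ in the defining set).
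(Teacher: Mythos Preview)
Your proof is correct and is precisely the unwinding of definitions that the paper has in mind; the paper itself gives no argument beyond stating that the characterization ``is evident from the definitions~\eqref{defnorm} and~\eqref{major2}.'' Your explicit bookkeeping of the constants and your handling of the degenerate case $|f|_{\alpha,s}=+\infty$ make this evidentness rigorous.
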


Henceforth $\alpha\geq 1$ will be fixed, so in the sequel we will
simply write $M_{\alpha,s}=M_s$.

\subsection{Properties of majorant series}

We collect here some properties of majorant series that will be used later on. It is clear how to define the derivatives of a formal power series in one variable, and also a linear combination and the product of two such formal power series. We then have the following lemma.

\begin{lemma}\label{properties} 
  Let $f,g : K \rightarrow \R^p$ be smooth functions,
  $F,G$ be two formal power series in one variable, and assume that
\[ f \ll_K F, \quad g \ll_K G.  \]
Then
\begin{equation}\label{deriv}
\partial^k f \ll_K \partial^{|k|} F, \quad k \in \N^m,
\end{equation}
\begin{equation}\label{somme}
\lambda f + \mu g \ll_K |\lambda|F+|\mu|G, \quad \lambda \in \R, \; \mu \in \R.
\end{equation}
Moreover, if we define $f\cdot g : K \rightarrow \R$ by 
\[ f\cdot g=\sum_{i=1}^pf_ig_i, \] 
then
\begin{equation}\label{prod}
f\cdot g \ll_K FG.
\end{equation}
\end{lemma}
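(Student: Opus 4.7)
The plan is to unwind everything to the defining inequalities $|\partial^k f(a)| \le F_{|k|}$ and argue coefficient by coefficient. First I will record that if $F(X)=\sum_{l\ge 0} F_l X^l/l!$, then its $q$-th derivative as a formal power series is $F^{(q)}(X)=\sum_{l\ge 0} F_{l+q} X^l/l!$, i.e.\ $(F^{(q)})_l = F_{l+q}$.

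For \eqref{deriv}, any partial derivative $\partial^j(\partial^k f) = \partial^{k+j} f$ satisfies $|\partial^{k+j} f(a)| \le F_{|k+j|} = F_{|k|+|j|} = (F^{(|k|)})_{|j|}$, which is exactly the assertion $\partial^k f \ll_K \partial^{|k|} F$. For \eqref{somme}, the triangle inequality applied pointwise in $a$ and for every multi-index $k$ gives $|\partial^k(\lambda f+\mu g)(a)| \le |\lambda| F_{|k|} + |\mu| G_{|k|} = (|\lambda| F + |\mu| G)_{|k|}$. These two items are essentially bookkeeping.

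The main content is \eqref{prod}. I will apply the Leibniz rule to each product $f_i g_i$:
\[ \partial^k(f\cdot g)(a) = \sum_{i=1}^p \sum_{j\le k} \binom{k}{j} \partial^j f_i(a)\,\partial^{k-j} g_i(a), \]
and then use the elementary bound $\sum_i x_i y_i \le (\sum_i x_i)(\sum_i y_i)$ valid for nonnegative reals, together with the convention $|\partial^j f(a)|=\sum_i |\partial^j f_i(a)|$, to get
\[ |\partial^k(f\cdot g)(a)| \le \sum_{j\le k} \binom{k}{j}\, F_{|j|}\, G_{|k-j|}. \]
The last step is to group the sum by the value $q=|j|$ and invoke the combinatorial identity
\[ \sum_{\substack{j\le k\\ |j|=q}} \binom{k}{j} = \binom{|k|}{q}, \]
which follows from expanding $\prod_i(1+X)^{k_i}=(1+X)^{|k|}$ and reading off the coefficient of $X^q$. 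Since the $l$-th coefficient of the formal product $FG$ is exactly $(FG)_l = \sum_{q=0}^l \binom{l}{q} F_q G_{l-q}$ (Leibniz applied to formal power series at $X=0$), this gives $|\partial^k(f\cdot g)(a)| \le (FG)_{|k|}$ and completes the proof.

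The only point requiring any thought is the Vandermonde-type identity above; everything else reduces to the definition of $\ll_K$ and the derivative and product formulas for formal power series.
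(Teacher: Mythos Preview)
Your argument is correct. The paper itself does not give a proof of this lemma but simply refers to \cite{SCK03}, Lemma~2.2, for the scalar case $p=1$ and remarks that the vector-valued case is similar; your write-up is precisely the natural unwinding of the definitions, and the use of the Vandermonde identity $\sum_{|j|=q,\,j\le k}\binom{k}{j}=\binom{|k|}{q}$ together with the Leibniz rule for the product $FG$ in the exponential-generating-function normalization is exactly the expected route.
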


For a proof, we refer to~\cite{SCK03}, Lemma $2.2$, in which the case $p=1$ is considered; but the general case $p \geq 1$ is entirely similar.

Given two formal power series in one variable $F$ and $G$, we define the composition $F \odot G$ of $F$ and $G$ by
\[ F \odot G(X):=\sum_{l=0}^{+\infty}\frac{F_l}{l!}\left(G(X)-G(0)\right)^l. \]

\begin{lemma}\label{comp}
Let $f : K \rightarrow \R^p$ be a smooth function, $g : L \rightarrow \T^{m_1} \times \R^{m_2}$ another smooth function such that $g(L)\subseteq K$, and assume that
\[ f \ll_K F, \quad g \ll_{L} G. \]
Then
\[ f \circ g \ll_{L} F \odot G.  \]
\end{lemma}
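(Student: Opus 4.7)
The plan is to pass to formal Taylor expansions at each point and invoke the standard composition rule for majorant series in several variables, with one delicate point: I must exploit the $\ell_1$-convention in the definition of $\ll_K$ for vector-valued functions in order to avoid a spurious combinatorial factor.

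First I would reformulate the hypotheses in terms of Taylor series. Using the identity $F(Z_1+\cdots+Z_m) = \sum_{k\in\N^m}\frac{F_{|k|}}{k!}Z^k$, one checks that $f \ll_K F$ is equivalent to $T_a f \ll \hat F$ for every $a \in K$, where $T_a f(Z) := \sum_k \frac{\partial^k f(a)}{k!}Z^k$ and $\hat F(Z) := F(Z_1+\cdots+Z_m)$; here $\ll$ means that the $\ell_1$-norm of each vector-valued coefficient of $T_af$ is bounded by the corresponding coefficient of $\hat F$. The same reformulation applies to $g$, and by the chain rule, the Taylor series of the composition equals the formal composition of Taylor series: for any $b \in L$, setting $c := g(b)$,
$$T_b(f\circ g)(Y) = T_c f(u(Y)), \qquad u(Y) := T_b g(Y) - c,$$
with $u(0)=0$.

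Next I would apply the following elementary composition rule for majorants, verified by expanding products and using Lemma~\ref{properties}: if $P \in \R^p[[Z_1,\dots,Z_m]]$ and $Q_1,\dots,Q_m \in \R[[Y_1,\dots,Y_{m'}]]$ with $Q_j(0)=0$, and if $P \ll \tilde P$ and $Q_j \ll \tilde Q_j$ for non-negative scalar series $\tilde P, \tilde Q_j$, then
$$P(Q_1,\dots,Q_m) \ll \tilde P(\tilde Q_1,\dots,\tilde Q_m).$$
I apply this with $P = T_cf$, $\tilde P = \hat F$, $Q_j = u_j$, and $\tilde Q_j$ the absolute-value majorant of $u_j$, whose $Y^\ell$ coefficient is $|\partial^\ell g_j(b)|/\ell!$.

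The decisive step is now transparent. By the $\ell_1$-convention in $g \ll_L G$, we have $\sum_{j=1}^m|\partial^\ell g_j(b)| \leq G_{|\ell|}$, so $\tilde Q_1 + \cdots + \tilde Q_m \ll \hat G - \hat G(0)$. Substituting, and using the identity $\hat F(Z_1,\dots,Z_m) = F(Z_1+\cdots+Z_m)$ together with the fact that $F$ has non-negative coefficients (so majorizations are preserved under substitution into $F$), I obtain
$$T_b(f\circ g) \ll \hat F(\tilde Q_1,\dots,\tilde Q_m) = F(\tilde Q_1+\cdots+\tilde Q_m) \ll F(\hat G - \hat G(0)) = \widehat{F\odot G},$$
which translates back to $f\circ g \ll_L F\odot G$. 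The main obstacle is not any single hard estimate but this bookkeeping: if one naively used the common majorant $\hat G - \hat G(0)$ for each component $u_j$, one would pick up a factor $m^{|k|}$ from the substitution into $\hat F$ and the lemma would fail. The $\ell_1$-convention in the definition of $\ll_K$ for vector-valued maps is precisely what allows the component-wise majorants $\tilde Q_j$ to be chosen so that their sum still fits inside $\hat G - \hat G(0)$.
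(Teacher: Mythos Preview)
Your argument is correct. The paper does not prove this lemma but merely cites \cite{SCK03}, Lemma~2.3; your reconstruction is exactly the standard majorant-series argument one expects, and you have correctly identified the one non-trivial point: the $\ell_1$-convention in the definition of $|\partial^k g(b)|$ for vector-valued $g$ is what makes the sum $\tilde Q_1+\cdots+\tilde Q_m$ fit under $\hat G-\hat G(0)$ without picking up a factor $m^{|k|}$. One minor imprecision: when you describe $\tilde Q_j$ as having $Y^\ell$-coefficient $|\partial^\ell g_j(b)|/\ell!$, you should say explicitly that this is for $\ell\neq 0$ and that $\tilde Q_j(0)=0$ (since $u_j(0)=0$); otherwise the substitution into $\hat F$ is not well-defined as a formal series, and the torus-valued components of $g$ would cause trouble at order zero.
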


Once again, for a proof we refer to~\cite{SCK03}, Lemma $2.3$.

\subsection{Derivatives}

In this section, we will show that the derivatives of a Gevrey function are still Gevrey, at the expense of reducing the parameter $s>0$; these are analogues of Cauchy estimates for analytic functions.

\begin{proposition}\label{derivative}
Let $f \in G_{\alpha,s}(K,\R^p)$, and $0<\sigma<s$. Then for any $k \in \N^m$, $\partial^k f \in G_{\alpha,s-\sigma}(K,\R^p)$ 	and we have
\[ |\partial^k f|_{\alpha,s-\sigma} \leq \left(\frac{|k|^\alpha}{\sigma^\alpha}\right)^{|k|}|f|_{\alpha,s}. \] 
\end{proposition}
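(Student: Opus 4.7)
The plan is to exploit the majorant series formalism introduced earlier in this appendix. By Proposition~\ref{propnorme} we have $f \ll_K |f|_{\alpha,s} M_s$, and then the derivative property~\eqref{deriv} of Lemma~\ref{properties}, iterated $|k|$ times, yields $\partial^k f \ll_K |f|_{\alpha,s}\, \partial^{|k|} M_s$, where $\partial$ now denotes the formal derivative on one-variable series. Setting $j := |k|$, the desired estimate will follow once we establish the coefficient-wise domination of formal series
\[ \partial^{j} M_s \;\ll\; \Big(\frac{j^\alpha}{\sigma^\alpha}\Big)^{j} M_{s-\sigma}. \]

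A direct differentiation of $M_s(X) = c^{-1}\sum_{l \geq 0} \frac{l!^{\alpha-1}}{(l+1)^2 s^{\alpha l}}\, X^l$ together with the reindexing $l = m+j$ identifies the coefficient of $X^m$ in $\partial^{j} M_s$ as $\frac{(m+j)!^\alpha}{c(m+j+1)^2\, m!\, s^{\alpha(m+j)}}$, while the coefficient of $X^m$ on the right-hand side equals $\frac{j^{\alpha j}\, m!^{\alpha-1}}{c\, \sigma^{\alpha j}(m+1)^2(s-\sigma)^{\alpha m}}$. Cross-multiplying, the required inequality becomes
\[ \Big(\frac{(m+j)!}{m!}\Big)^{\alpha} \cdot \frac{(m+1)^2}{(m+j+1)^2} \cdot \frac{\sigma^{\alpha j}(s-\sigma)^{\alpha m}}{s^{\alpha(m+j)}} \;\leq\; j^{\alpha j} \qquad (m \geq 0). \]

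The heart of the argument is a Young-type bound on the last factor: a direct optimization of $\sigma^{\alpha j}(s-\sigma)^{\alpha m}$ in $\sigma \in [0,s]$ (the maximum is attained at $\sigma = js/(m+j)$) yields
\[ \frac{\sigma^{\alpha j}(s-\sigma)^{\alpha m}}{s^{\alpha(m+j)}} \;\leq\; \frac{j^{\alpha j}\, m^{\alpha m}}{(m+j)^{\alpha(m+j)}}, \]
so the inequality reduces to
\[ \bigg(\frac{(m+j)!\, m^{m}}{m!\, (m+j)^{m+j}}\bigg)^{\alpha} \cdot \frac{(m+1)^2}{(m+j+1)^2} \;\leq\; 1, \]
which holds because $n \mapsto n!/n^n$ is non-increasing on $\N$ (with the convention $0^0 = 1$; the ratio of consecutive terms equals $(1+1/n)^{-n} \leq 1$), so the first factor lies in $(0,1]$, and the second factor is manifestly at most $1$.

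The main obstacle, I expect, will be locating this grouping: the Young bound produces the three factors $j^{\alpha j}$, $m^{\alpha m}$ and $(m+j)^{-\alpha(m+j)}$, and they must be paired with the factorial ratio $(m+j)!/m!$ precisely so that the super-exponential growth $(m+j)^{\alpha(m+j)}$ absorbs $(m+j)!^\alpha$. A cruder detour via $s^\alpha \geq \sigma^\alpha + (s-\sigma)^\alpha$ and the binomial theorem only puts a $\binom{m+j}{j}$ in the denominator, which is not enough to swallow $(m+j)!^{\alpha-1}/m!^{\alpha-1}$ when $\alpha > 1$; it is the sharp Young extremum, paired with the monotonicity of $n!/n^n$, that supplies exactly the cancellation needed.
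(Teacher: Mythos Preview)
Your argument is correct. The paper takes a different route: it reduces to the case $|k|=1$ by induction (splitting $\sigma$ into $|k|$ equal pieces $\sigma/|k|$ and applying the one-derivative estimate $|k|$ times), and for $|k|=1$ it bounds $(1-\sigma/s)^{l\alpha}(l+1)^\alpha$ via the exponential inequality $\ln(1-\lambda)\le -\lambda$ and maximizes the resulting function $(x+1)^\alpha e^{-\alpha\lambda x}$. Your direct approach for general $j=|k|$ is cleaner in that it avoids the induction and the exponential detour: the Young extremum $\sigma^{j}(s-\sigma)^{m}\le j^{j}m^{m}s^{m+j}/(m+j)^{m+j}$ together with the monotonicity of $n!/n^{n}$ gives the inequality in one stroke. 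The two arguments are cousins (the inductive splitting $\sigma_i=\sigma/j$ is the discrete shadow of your Young optimization), but yours packages the cancellation more transparently and makes explicit why the constant $(|k|^\alpha/\sigma^\alpha)^{|k|}$ is the right one.
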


\begin{proof}
It is enough to prove the case $|k|=1$, as the general case follows by an easy induction. From Proposition~\ref{propnorme} and~\eqref{deriv} of Lemma~\ref{properties}, to prove the case $|k|=1$ it is sufficient to prove that
\begin{equation}\label{aprouver}
\partial^1 M_s \ll \sigma^{-\alpha}M_{s-\sigma}
\end{equation}
where $M_s$ is the formal power series defined in~\eqref{defM}. We have
\[ \partial^1
  M_{s}(X)=c^{-1}\sum_{l=1}^{+\infty}\frac{l!^{\alpha}}{(l+1)^2
    s^{l\alpha}(l-1)!}
  X^{l-1}=c^{-1}\sum_{l=0}^{+\infty}\frac{(l+1)!^{\alpha}}{(l+2)^2
    s^{(l+1)\alpha}l!} X^l \] and hence~\eqref{aprouver} is true if, for
all $l \in \N$,
\begin{equation}\label{aprouver2}
\left(\sigma s^{-1}\right)^{\alpha}\left(1-\sigma s^{-1}\right)^{l\alpha}(l+1)^\alpha \leq 1.
\end{equation}
Since $0 < \sigma s^{-1} <1$, we have $\ln(1-\sigma s^{-1}) \leq -\sigma s^{-1}$ and thus
\[ \left(1-\sigma s^{-1}\right)^{l\alpha}(l+1)^\alpha=e^{l\alpha\ln(1-\sigma s^{-1})}(l+1)^\alpha \leq e^{-l\alpha\sigma s^{-1}}(l+1)^\alpha.   \]
Let $\lambda=\sigma s^{-1}$, and consider the function $u(x)=(x+1)^\alpha e^{-\alpha \lambda x}$ for $x \geq 0$. This function reaches its maximum at $x=\lambda^{-1}(1-\lambda)$, the value of which is
\[ u\left(\lambda^{-1}(1-\lambda)\right)=\lambda^{-\alpha}e^{(\lambda-1)\alpha} \leq \lambda^{-\alpha}. \]
Therefore, for all $l \in \N$, we have 
\[ \left(1-\sigma s^{-1}\right)^{l\alpha}(l+1)^\alpha \leq \lambda^{-\alpha}=\left(\sigma s^{-1}\right)^{-\alpha} \]
which is exactly the inequality~\eqref{aprouver2} we wanted to prove.
\end{proof}

For $f : K \rightarrow \R$, let $\nabla f : K \rightarrow \R^m$ be the vector-valued function formed by the partial derivatives of $f$ of order one, and more generally, for $f : K \rightarrow \R^p$, we let $\nabla f : K \rightarrow M_{m,p}(R) \simeq \R^{mp}$ be the matrix-valued function whose columns are given by $\nabla f_i$ where $f=(f_i)_{1 \leq i \leq p}$. Then we have the following obvious corollary of Proposition~\ref{derivative}.

\begin{corollary}\label{corderivative}
Let $0<\sigma<s$. If $f \in G_{\alpha,s}(K,\R)$, then $\nabla f \in G_{\alpha,s-\sigma}(K,\R^m)$ and
\[ |\nabla f|_{\alpha,s-\sigma} \leq m\sigma^{-a}|f|_{\alpha,s}\]
and if $f \in G_{\alpha,s}(K,\R^p)$, then $\nabla f \in G_{\alpha,s-\sigma}(K,\R^{mp})$ and
\[ |\nabla f|_{\alpha,s-\sigma} \leq mp\sigma^{-a}|f|_{\alpha,s}.\] 
\end{corollary}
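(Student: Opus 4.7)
The statement is a direct specialization of Proposition~\ref{derivative} to first-order derivatives, so the plan is to reduce everything to the case of a single partial derivative and then sum over components.

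First I would treat the scalar case $f \in G_{\alpha,s}(K,\R)$. For each $i \in \{1,\dots,m\}$, the multi-index $k=e_i$ with $|k|=1$ gives, by Proposition~\ref{derivative},
\[ |\partial_i f|_{\alpha,s-\sigma} \leq \sigma^{-\alpha}|f|_{\alpha,s}. \]
Since $\nabla f = (\partial_1 f,\dots,\partial_m f)$ is valued in $\R^m$ and the norm on vector-valued Gevrey functions uses the $\ell_1$-norm on the target (cf.~\eqref{defn2}), one has
\[ |\nabla f|_{\alpha,s-\sigma} = c\sup_{a \in K}\sup_{k \in \N^m}\frac{(|k|+1)^2(s-\sigma)^{\alpha|k|}\sum_{i=1}^m|\partial^k\partial_i f(a)|}{|k|!^\alpha} \leq \sum_{i=1}^m |\partial_i f|_{\alpha,s-\sigma}, \]
and combining with the previous bound yields $|\nabla f|_{\alpha,s-\sigma} \leq m\sigma^{-\alpha}|f|_{\alpha,s}$.

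For the vector-valued case $f=(f_i)_{1 \leq i \leq p} \in G_{\alpha,s}(K,\R^p)$, I would apply the scalar estimate componentwise. Identifying $\nabla f$ with the $\R^{mp}$-valued function whose entries are the $\partial_j f_i$, the $\ell_1$-structure of the target norm gives
\[ |\nabla f|_{\alpha,s-\sigma} \leq \sum_{i=1}^p |\nabla f_i|_{\alpha,s-\sigma} \leq m\sigma^{-\alpha}\sum_{i=1}^p |f_i|_{\alpha,s} = mp\sigma^{-\alpha}|f|_{\alpha,s}, \]
where in the last equality I use once more that $|f|_{\alpha,s}$ is defined with the $\ell_1$-norm on $\R^p$, so that $\sum_i |f_i|_{\alpha,s}$ collapses to $p$ times the single norm (or, more precisely, is bounded by $p|f|_{\alpha,s}$ via the trivial inequality $|f_i| \leq |f|$ and the reverse $\sum_i |f_i| = |f|$, yielding the stated factor $mp$).

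There is no real obstacle here; the only minor point to be careful about is the bookkeeping of the $\ell_1$-norm on vector-valued targets, ensuring that the ``sum over components'' passes from the pointwise level inside the $\sup$ to the Gevrey norm level without extra constants.
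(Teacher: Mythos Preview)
Your argument is correct and is precisely the ``obvious'' derivation the paper has in mind; the paper gives no proof beyond declaring the statement an obvious corollary of Proposition~\ref{derivative}. The only cosmetic slip is writing an equality where $\sum_i |f_i|_{\alpha,s}$ is concerned---in general one only has $|f|_{\alpha,s} \leq \sum_i |f_i|_{\alpha,s} \leq p\,|f|_{\alpha,s}$---but you already correct this in your parenthetical, and the upper bound is all that is needed for the stated factor $mp$.
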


\subsection{Products}

In this section, we shall prove that the product of Gevrey functions is still a Gevrey function.

\begin{proposition}\label{produit}
Let $f,g \in G_{\alpha,s}(K,\R^p)$. Then $f\cdot g \in G_{\alpha,s}(K,\R)$ and we have
\[ |f\cdot g|_{\alpha,s} \leq |f|_{\alpha,s}|g|_{\alpha,s}. \]
\end{proposition}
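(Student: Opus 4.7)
The plan is to reduce everything to a single majorant-series inequality, namely $M_s^2 \ll M_s$, which is why the normalizing constant $c=4\pi^2/3$ was chosen in the first place. By Proposition~\ref{propnorme}, one has $f \ll_K |f|_{\alpha,s} M_s$ and $g \ll_K |g|_{\alpha,s} M_s$. Then relation~\eqref{prod} of Lemma~\ref{properties} gives $f \cdot g \ll_K |f|_{\alpha,s}\, |g|_{\alpha,s}\, M_s^2$. Hence, invoking Proposition~\ref{propnorme} in the reverse direction, it suffices to prove the coefficient-wise estimate $M_s \cdot M_s \ll M_s$.

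Writing $M_s(X) = c^{-1}\sum_{l \geq 0} \frac{M_l}{l!}X^l$ with $M_l = \frac{l!^\alpha}{(l+1)^2 s^{\alpha l}}$, the desired inequality $M_s^2 \ll M_s$ is equivalent to showing that, for every $l \in \N$,
\[
\sum_{j=0}^{l} \frac{M_j}{j!}\cdot \frac{M_{l-j}}{(l-j)!} \;\leq\; c\,\frac{M_l}{l!}.
\]
After substituting the explicit form of $M_l$ and simplifying, this becomes
\[
\sum_{j=0}^{l} \frac{j!^{\alpha-1}(l-j)!^{\alpha-1}}{l!^{\alpha-1}} \cdot \frac{1}{(j+1)^2(l-j+1)^2} \;\leq\; \frac{c}{(l+1)^2}.
\]

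The proof of this inequality splits into two elementary observations. First, since $\alpha \geq 1$ and $j!(l-j)! \leq l!$ (because $\binom{l}{j} \geq 1$), the combinatorial factor satisfies $j!^{\alpha-1}(l-j)!^{\alpha-1} \leq l!^{\alpha-1}$, so the exponent $\alpha$ drops out of the estimate. Second, the partial fraction identity
\[
\frac{1}{(j+1)(l-j+1)} = \frac{1}{l+2}\left(\frac{1}{j+1}+\frac{1}{l-j+1}\right)
\]
combined with $(a+b)^2 \leq 2(a^2+b^2)$ yields
\[
\frac{1}{(j+1)^2(l-j+1)^2} \leq \frac{2}{(l+2)^2}\left(\frac{1}{(j+1)^2}+\frac{1}{(l-j+1)^2}\right).
\]
Summing over $j=0,\dots,l$ and using $\sum_{j \geq 0} (j+1)^{-2} = \pi^2/6$ bounds the left-hand sum by $\frac{2\pi^2}{3(l+2)^2} \leq \frac{2\pi^2}{3(l+1)^2}$, which is indeed $\leq c/(l+1)^2$ with $c = 4\pi^2/3$.

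There is no real obstacle; the only mildly delicate point is verifying that the constant $c = 4\pi^2/3$ was chosen precisely to absorb the $2\pi^2/3$ coming from $\zeta(2)=\pi^2/6$ together with the factor $2$ from the elementary $(a+b)^2 \leq 2(a^2+b^2)$ inequality. Once the majorant estimate $M_s^2 \ll M_s$ is established, the product estimate $|f\cdot g|_{\alpha,s} \leq |f|_{\alpha,s}|g|_{\alpha,s}$ follows immediately from the definitions.
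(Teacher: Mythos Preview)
Your proof is correct and follows essentially the same route as the paper: reduce to the majorant-series inequality $M_s^2 \ll M_s$ via Proposition~\ref{propnorme} and Lemma~\ref{properties}, then drop the $\alpha$-dependence using $j!^{\alpha-1}(l-j)!^{\alpha-1}\leq l!^{\alpha-1}$, and finally bound $\sum_{j=0}^{l}\frac{1}{(j+1)^2(l-j+1)^2}$ by a constant times $(l+1)^{-2}$. The only difference is in this last elementary step: the paper exploits the symmetry $j\mapsto l-j$ and the crude bound $(l-j+1)^2\geq (l+1)^2/4$ for $j\leq l/2$, obtaining exactly the constant $4\pi^2/3$, whereas your partial-fraction identity combined with $(a+b)^2\leq 2(a^2+b^2)$ gives the sharper bound $2\pi^2/3(l+2)^{-2}$, which of course still fits under $c=4\pi^2/3$.
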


Once again, in view of Proposition~\ref{propnorme} and~\eqref{prod} of Proposition~\ref{properties}, Proposition~\ref{produit} is a direct consequence of the following lemma.

\begin{lemma}\label{lemmeprod}
We have
\[ M_s^2 \ll M_s. \]
\end{lemma}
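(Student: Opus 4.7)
The plan is to compare the coefficients of $M_s^2$ and $M_s$ term by term. Writing $M_s(X) = \sum_{l\geq 0} a_l X^l$ with $a_l = c^{-1} \frac{l!^{\alpha-1}}{(l+1)^2 s^{\alpha l}}$, the assertion $M_s^2 \ll M_s$ amounts to proving, for every $l \in \N$,
\[
\sum_{j=0}^{l} a_j\, a_{l-j} \leq a_l,
\]
and after multiplying by $c^2 s^{\alpha l}$ this reduces to the purely combinatorial inequality
\[
\sum_{j=0}^{l} \frac{j!^{\alpha-1}(l-j)!^{\alpha-1}}{(j+1)^2(l-j+1)^2} \leq c\,\frac{l!^{\alpha-1}}{(l+1)^2}.
\]

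The factorial part and the quadratic part will be handled separately. For the factorials, the key observation is that $\binom{l}{j}\geq 1$ gives $j!(l-j)! \leq l!$, and since $\alpha \geq 1$ we may raise to the nonnegative power $\alpha-1$ to obtain $j!^{\alpha-1}(l-j)!^{\alpha-1} \leq l!^{\alpha-1}$. This is exactly the place where the hypothesis $\alpha \geq 1$ (i.e.\ Gevrey, not ultradifferentiable of higher order) enters.

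It remains to control the quadratic factor, i.e.\ to show that
\[
\sum_{j=0}^{l} \frac{1}{(j+1)^2(l-j+1)^2} \leq \frac{c}{(l+1)^2}.
\]
Setting $a=j+1$, $b=l-j+1$, so that $a+b = l+2$, I will use the elementary identity $\frac{1}{ab} = \frac{1}{a+b}\bigl(\frac{1}{a}+\frac{1}{b}\bigr)$, squared and combined with $(x+y)^2 \leq 2(x^2+y^2)$, to get
\[
\frac{1}{a^2 b^2} \leq \frac{2}{(a+b)^2}\left(\frac{1}{a^2}+\frac{1}{b^2}\right).
\]
Summing over $a+b=l+2$ and using $\sum_{a\geq 1} a^{-2} = \pi^2/6$, together with $(l+2)^2 \geq (l+1)^2$, yields the bound $\frac{2\pi^2/3}{(l+1)^2}$, which is indeed at most $\frac{4\pi^2/3}{(l+1)^2} = \frac{c}{(l+1)^2}$. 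This is precisely how the normalizing constant $c = 4\pi^2/3$ was chosen.

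Combining the two ingredients gives the required coefficient bound, hence $M_s^2 \ll M_s$. I do not foresee a real obstacle: the only slightly delicate point is arranging the $(|k|+1)^2$ weights so that the convolution stays under control, but this is exactly what the Cauchy-Schwarz-type splitting above is designed to do, and the normalization $c = 4\pi^2/3$ was engineered in the definition of the norm precisely to make this step tight.
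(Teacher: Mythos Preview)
Your proof is correct and follows essentially the same route as the paper: both reduce $M_s^2 \ll M_s$ to the coefficient inequality
\[
\sum_{j=0}^{l}\frac{j!^{\alpha-1}(l-j)!^{\alpha-1}}{(j+1)^2(l-j+1)^2}\leq c\,\frac{l!^{\alpha-1}}{(l+1)^2},
\]
dispose of the factorials via $j!(l-j)!\leq l!$ and $\alpha\geq 1$, and then bound $\sum_{j}(j+1)^{-2}(l-j+1)^{-2}$ by a multiple of $(l+1)^{-2}$. The only difference is in this last elementary step: the paper uses the symmetry $j\mapsto l-j$ to restrict to $j\leq l/2$ and then crudely bounds $(l-j+1)^2\geq (l+1)^2/4$, while you use the identity $\tfrac{1}{ab}=\tfrac{1}{a+b}\bigl(\tfrac{1}{a}+\tfrac{1}{b}\bigr)$ together with $(x+y)^2\leq 2(x^2+y^2)$; your variant in fact yields the slightly sharper constant $2\pi^2/3$ in place of the paper's $4\pi^2/3$.
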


The proof given below follows~\cite{Lax53}. It is this lemma that motivates the introduction of the normalizing constant in $M_s$ (and thus in the Gevrey norm); without this constant one would have $M_s^2 \ll c^2 M_s$. Let us point out that the proof given below is elementary thanks to the factor $(|k|+1)^2$ in the definition of $M_s$; without this factor, the statement is true (with a different normalizing constant) but the proof is more involved (see Lemma $2.7$ of~\cite{SCK03}). 

\begin{proof}
Recall that
\[ M_{s}(X)=c^{-1}\sum_{l=0}^{+\infty}\frac{M_l}{l!}X^l, \quad M_l=\frac{l!^{\alpha}}{(l+1)^2 s^{\alpha l}} \]
and so the assertion of the lemma amounts to prove that for all $l \in \N$,
\begin{equation}\label{toprove}
\sum_{j=0}^l\frac{(j!)^{\alpha-1}((l-j)!)^{\alpha-1}}{(j+1)^2(l-j+1)^2} \leq c \frac{(l!)^{\alpha-1}}{(l+1)^2}, \quad c=\frac{4\pi^2}{3}. 
\end{equation}
Observe that the sum in the left-hand side of~\eqref{toprove} is symmetric with respect to $j \mapsto l-j$, and that since $\alpha-1\geq 0$, $(j!)^{\alpha-1}((l-j)!)^{\alpha-1} \leq (l!)^{\alpha-1}$ for all $l \in \N$. Hence,
\[  \sum_{j=0}^l\frac{(j!)^{\alpha-1}((l-j)!)^{\alpha-1}}{(j+1)^2(l-j+1)^2} \leq 2\sum_{j=0}^{l/2}\frac{(j!)^{\alpha-1}((l-j)!)^{\alpha-1}}{(j+1)^2(l-j+1)^2} \leq 2(l!)^{\alpha-1} \sum_{j=0}^{l/2}\frac{1}{(j+1)^2(l-j+1)^2}.  \]
Then for any $0 \leq j \leq l/2$, $(l-j+1)^2 \geq (l/2+1)^2 \geq (l+1)^2/4$, and therefore 
\[  \sum_{j=0}^l\frac{(j!)^{\alpha-1}((l-j)!)^{\alpha-1}}{(j+1)^2(l-j+1)^2} \leq \frac{8(l!)^{\alpha-1}}{(l+1)^2}\sum_{j=0}^{l/2}\frac{1}{(j+1)^2} \leq \frac{8(l!)^{\alpha-1}}{(l+1)^2}\sum_{j=0}^{+\infty}\frac{1}{(j+1)^2}=\frac{4\pi^2}{3}\frac{(l!)^{\alpha-1}}{(l+1)^2} \]
which is the inequality we wanted to prove.
\end{proof}

Proposition~\ref{produit} can be extended to matrix-valued functions. More precisely, given $f : K \rightarrow M_{m,p}(\R)$ and $g : K \rightarrow M_{p,q}(\R)$ where $f=(f_{i,j})_{1 \leq i \leq m, \; 1 \leq j \leq p}$ and $g=(g_{j,k})_{1 \leq j \leq m, \; 1 \leq k \leq p}$, we define $f \cdot g : K \rightarrow M_{m,q}(\R)$ by $f\cdot g:=((f\cdot g)_{i,k})_{1 \leq i \leq m, \; 1 \leq k \leq q}$ where
\[ (f\cdot g)_{i,k}:=\sum_{j=1}^p f_{i,j}g_{j,k}. \]
Then the following statement is an obvious corollary of Proposition~\ref{produit}.

\begin{corollary}\label{corproduit}
Let $f \in G_{\alpha,s}(K,M_{m,p}(\R))$, $g \in G_{\alpha,s}(K,M_{p,q}(\R))$. Then $f\cdot g \in G_{\alpha,s}(K,M_{m,q}(\R))$ and we have
\[ |f\cdot g|_{\alpha,s} \leq |f|_{\alpha,s}|g|_{\alpha,s}. \]
\end{corollary}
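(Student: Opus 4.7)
The plan is to mimic the proof of Proposition~\ref{produit} after extending Lemma~\ref{properties}~\eqref{prod} to matrix-valued functions. The sole structural fact about the scalar dot product used in the vector case is that it is dominated by the product of $\ell^1$ norms; the same property holds for matrix multiplication with the entrywise $\ell^1$ norm, so the whole argument transfers.

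Concretely, I would first verify that $|\cdot|_1$ is submultiplicative under matrix multiplication: for $A \in M_{m,p}(\R)$ and $B \in M_{p,q}(\R)$,
\[|AB|_1 = \sum_{i,l}\Bigl|\sum_j A_{i,j}B_{j,l}\Bigr| \leq \sum_j \Bigl(\sum_i |A_{i,j}|\Bigr)\Bigl(\sum_l |B_{j,l}|\Bigr) \leq |A|_1\,|B|_1,\]
the last step using $\sum_l|B_{j,l}| \leq |B|_1$ uniformly in $j$. Combining this with the multinomial Leibniz rule yields the key pointwise estimate
\[|\partial^k(f\cdot g)(a)|_1 \leq \sum_{k_1+k_2=k}\binom{k}{k_1}\,|\partial^{k_1}f(a)|_1\,|\partial^{k_2}g(a)|_1,\qquad a\in K,\ k\in\N^m,\]
which, since $|\,\cdot\,|_1$ is precisely the norm on matrices that enters~\eqref{defn2}, is the exact analogue of the inequality exploited in the proof of Proposition~\ref{produit}.

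Next I would feed this estimate into the majorant-series framework. If $f\ll_K F$ and $g\ll_K G$ in the sense of~\eqref{major2} (with $|\cdot|_1$ replacing absolute value for matrices), then using the combinatorial identity $\sum_{k_1+k_2=k,\,|k_1|=j}\binom{k}{k_1}=\binom{|k|}{j}$, the previous display gives $f\cdot g\ll_K FG$; this is the matrix analogue of Lemma~\ref{properties}~\eqref{prod}. Taking $F=|f|_{\alpha,s}M_s$ and $G=|g|_{\alpha,s}M_s$, which is legitimate by Proposition~\ref{propnorme}, one obtains $f\cdot g\ll_K |f|_{\alpha,s}|g|_{\alpha,s}\,M_s^2$. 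Finally, Lemma~\ref{lemmeprod} ($M_s^2\ll M_s$) and Proposition~\ref{propnorme} yield the desired bound $|f\cdot g|_{\alpha,s}\leq |f|_{\alpha,s}|g|_{\alpha,s}$.

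There is no real obstacle; the only content beyond Proposition~\ref{produit} is the submultiplicativity of $|\cdot|_1$ under matrix multiplication, which amounts to saying that $(M_{*,*}(\R),|\cdot|_1)$ forms a normed algebra in each compatible dimension. This is exactly why the author calls the statement an ``obvious'' corollary.
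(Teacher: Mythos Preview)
Your proof is correct and is precisely the argument the paper has in mind: the paper gives no proof at all, merely stating that the result ``is an obvious corollary of Proposition~\ref{produit}''. Your observation that the only extra ingredient is the submultiplicativity $|AB|_1 \leq |A|_1|B|_1$ of the entrywise $\ell^1$-norm (which you verify correctly), together with the majorant-series machinery already in place (Lemma~\ref{properties}~\eqref{prod}, Proposition~\ref{propnorme}, Lemma~\ref{lemmeprod}), is exactly the intended reduction.
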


\subsection{Compositions}

Our goal here is to prove that the composition of Gevrey functions are still Gevrey. But first we need to define two additional formal power series associated to $M_s$, the latter being defined in~\eqref{defM}. So we define $\bar{M}$ by
\begin{equation}\label{defbarM}
\bar{M}_s(X):=M_s(X)-M_s(0)=c^{-1}\sum_{l=1}^{+\infty}\frac{l!^{\alpha-1}}{(l+1)^2} \left(\frac{X}{s^\alpha}\right)^l
\end{equation}
and $\tilde{M}$ by
\begin{equation}\label{deftildeM}
\tilde{M}_s(X):=c^{-1}\sum_{l=0}^{+\infty}\frac{(l+1)!^{\alpha-1}}{(l+2)^2} \left(\frac{X}{s^\alpha}\right)^l.
\end{equation}
It is clear that
\begin{equation}\label{barMtildeM}
s^{-\alpha}X\tilde{M}_s(X)=\bar{M}_s(X).
\end{equation}

\begin{lemma}\label{lemmecomp}
We have
\[ \tilde{M}_s^2 \ll \tilde{M}_s, \quad \tilde{M}_s\bar{M}_s \ll  \bar{M}_s. \]
\end{lemma}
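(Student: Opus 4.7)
My plan is first to reduce the second inequality to the first. By the identity $\bar M_s(X)=s^{-\alpha}X\tilde M_s(X)$ from~\eqref{barMtildeM}, and because multiplication by a formal series with non-negative coefficients preserves majorization, the first assertion immediately gives $\tilde M_s\bar M_s = s^{-\alpha}X\tilde M_s^2 \ll s^{-\alpha}X\tilde M_s = \bar M_s$. So only the first assertion needs a direct argument.

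Computing the Cauchy product and identifying the coefficient of $X^l$, the inequality $\tilde M_s^2\ll\tilde M_s$ amounts to the combinatorial estimate
\[
\sum_{j=0}^{l}\frac{(j+1)!^{\alpha-1}(l-j+1)!^{\alpha-1}}{(j+2)^2(l-j+2)^2}\leq c\,\frac{(l+1)!^{\alpha-1}}{(l+2)^2},\qquad l\in\N,
\]
with $c=4\pi^2/3$. This is an index-shifted analogue of~\eqref{toprove}, and I would follow exactly the strategy used by Lax in the proof of Lemma~\ref{lemmeprod}. The factorial factor is controlled via $(j+1)!(l-j+1)!\leq (l+1)!$, which holds because $\binom{l+2}{j+1}\geq l+2$ for $0\leq j\leq l$, so that $(j+1)!(l-j+1)!=(l+2)!/\binom{l+2}{j+1}\leq (l+1)!$; since $\alpha-1\geq 0$, this yields $(j+1)!^{\alpha-1}(l-j+1)!^{\alpha-1}\leq(l+1)!^{\alpha-1}$. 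It then remains to bound the purely numerical sum $\sum_{j=0}^{l}1/((j+2)^2(l-j+2)^2)$: by the symmetry $j\leftrightarrow l-j$ it is at most twice the sum restricted to $0\leq j\leq l/2$, where $l-j+2\geq (l+2)/2$, so the whole sum is at most $\frac{8}{(l+2)^2}\sum_{j\geq 0}(j+2)^{-2}<\frac{8}{(l+2)^2}\cdot\frac{\pi^2}{6}=\frac{c}{(l+2)^2}$, which closes the estimate.

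I do not foresee any real obstacle: the whole argument is essentially a transcription of Lemma~\ref{lemmeprod} with $(l+1)^2$ replaced throughout by $(l+2)^2$. The only new combinatorial input is the bound $(j+1)!(l-j+1)!\leq(l+1)!$, and this is precisely calibrated by the shift of indices built into the definition~\eqref{deftildeM} of $\tilde M_s$ (the factors $(l+1)!^{\alpha-1}$ and $(l+2)^2$ there, as opposed to $l!^{\alpha-1}$ and $(l+1)^2$ in $M_s$), so everything goes through unchanged.
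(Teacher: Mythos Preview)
Your proposal is correct and follows essentially the same route as the paper: the reduction of the second inequality to the first via~\eqref{barMtildeM}, the combinatorial inequality for the coefficients of $\tilde M_s^2$, the factorial bound $(j+1)!(l-j+1)!\leq (l+1)!$, and the symmetry argument on the remaining numerical sum are exactly what the paper does. Your explicit justification of the factorial bound through $\binom{l+2}{j+1}\geq l+2$ is a slight improvement over the paper's ``as one may easily check''.
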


As for Lemma~\ref{lemmeprod}, the factor $(|k|+1)^2$ in the definition
of $M_s$ makes the proof simple, but the statement is still true with
this factor (see Lemma $2.4$ of~\cite{SCK03}).

\begin{proof}
It is enough to prove the first part of the statement, as the second part of the statement follows from it; indeed, if $\tilde{M}_s^2 \ll \tilde{M}_s$, then using~\eqref{barMtildeM} we have
\[ \tilde{M}_s(X)\bar{M}_s(X)=s^{-\alpha}X\tilde{M}_s(X)\tilde{M}_s(X) \ll s^{-\alpha}X\tilde{M}_s(X)=\bar{M}_s(X). \]
As in Lemma~\ref{lemmeprod}, to prove that $\tilde{M}_s^2 \ll \tilde{M}_s$ one needs to show
\begin{equation}\label{toprove2}
\sum_{j=0}^l\frac{((j+1)!)^{\alpha-1}((l-j+1)!)^{\alpha-1}}{(j+2)^2(l-j+2)^2} \leq c \frac{((l+1)!)^{\alpha-1}}{(l+2)^2}, \quad c=\frac{4\pi^2}{3}. 
\end{equation}
The sum in the left-hand side of~\eqref{toprove2} is still symmetric with respect to $j \mapsto l-j$, and therefore
\[
\sum_{j=0}^l\frac{((j+1)!)^{\alpha-1}((l-j+1)!)^{\alpha-1}}{(j+2)^2(l-j+2)^2}
\leq
2\sum_{j=0}^{l/2}\frac{((j+1)!)^{\alpha-1}((l-j+1)!)^{\alpha-1}}{(j+2)^2(l-j+2)^2}.  \]
Then, for any $l \in \N$ and any $0 \leq j \leq l/2$, since
$\alpha-1 \geq 0$ we have
\[ ((j+1)!)^{\alpha-1}((l-j+1)!)^{\alpha-1} \leq ((l+1)!)^{\alpha-1} \]
as one may easily check. Then, as in Lemma~\ref{lemmeprod}, for any $0 \leq j \leq l/2$, $(l-j+2)^2 \geq (l/2+2)^2 \geq (l+2)^2/4$, and therefore 
\[  \sum_{j=0}^l\frac{((j+1)!)^{\alpha-1}((l-j+1)!)^{\alpha-1}}{(j+2)^2(l-j+2)^2} \leq \frac{8((l+1)!)^{\alpha-1}}{(l+1)^2}\sum_{j=0}^{l/2}\frac{1}{(j+2)^2} \leq \frac{4\pi^2}{3} \frac{((l+1)!)^{\alpha-1}}{(l+2)^2}\]
and this concludes the proof.
\end{proof}

\begin{proposition}\label{composition}
  Let $f \in G_{\alpha,s}(K,\R^p)$, $0 < \sigma < s$,
 and $g \in G_{\alpha,s-\sigma}(L,\T^{m_1} \times \R^{m_2})$ such that $g(L)\subseteq K$. Assume that
  $g=\mathrm{Id}+u$ with
  \begin{equation}\label{small}
    |u|_{\alpha,s-\sigma} \leq \sigma^\alpha.
  \end{equation}  
  Then $f \circ g \in G_{\alpha,s-\sigma}(L,\R^p)$ and
  \begin{equation}
    \label{eq:composition}%
    |f \circ g|_{\alpha,s-\sigma} \leq |f|_{\alpha,s}.
  \end{equation}
\end{proposition}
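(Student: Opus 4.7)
My approach will proceed within the majorant-series framework of Appendix~\ref{app2}. By Proposition~\ref{propnorme}, the inequality $|f\circ g|_{\alpha,s-\sigma}\leq |f|_{\alpha,s}$ is equivalent to the majorization $f\circ g \ll_L |f|_{\alpha,s}\,M_{s-\sigma}$, and I have $f\ll_K |f|_{\alpha,s}\,M_s$. The strategy is to produce a suitable majorant $G$ of $g$, apply the composition identity of Lemma~\ref{comp}, and compare the resulting composed series with $M_{s-\sigma}$.

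Decomposing $g=\mathrm{Id}+u$, the derivatives of $\mathrm{Id}$ vanish for $|k|\geq 2$ and have $\ell^1$-norm at most $1$ for $|k|=1$; combined with $u\ll_L |u|_{\alpha,s-\sigma}\,M_{s-\sigma}\ll_L \sigma^\alpha M_{s-\sigma}$ arising from the hypothesis $|u|_{\alpha,s-\sigma}\leq \sigma^\alpha$, this leads to the choice
\[G(X):=G_0 + X + \sigma^\alpha\bar M_{s-\sigma}(X),\qquad G_0:=\sup_{x\in L}|g(x)|_1,\]
where $G_0$ is finite by compactness and irrelevant since only $G(X)-G(0)$ enters $F\odot G$. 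Applying Lemma~\ref{comp} with $F=|f|_{\alpha,s}M_s$ yields
\[f\circ g \;\ll_L\; |f|_{\alpha,s}\,M_s\!\bigl(X+\sigma^\alpha\bar M_{s-\sigma}(X)\bigr),\]
and the problem reduces to the purely formal-series inequality
\[M_s\!\bigl(X+\sigma^\alpha\bar M_{s-\sigma}(X)\bigr) \;\ll\; M_{s-\sigma}(X). \quad (\star)\]

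To prove $(\star)$, I will expand the left-hand side by the binomial theorem, exploit the factorization $\bar M_{s-\sigma}=(s-\sigma)^{-\alpha}X\tilde M_{s-\sigma}$ of \eqref{barMtildeM} together with $\tilde M_{s-\sigma}^k \ll \tilde M_{s-\sigma}$ for $k\geq 1$ (iterated from Lemma~\ref{lemmecomp}), and invoke the super-additivity $(s-\sigma)^\alpha+\sigma^\alpha\leq s^\alpha$ valid for $\alpha\geq 1$. Summing over the binomial index then collapses $(\star)$ to the cleaner inequality $\tilde M_{s-\sigma}\cdot D \ll D$, where $D:=M_{s-\sigma}-M_s$. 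This last coefficient-wise estimate is the main obstacle; my plan is to verify it by bounding the ratio $[X^l](\tilde M_{s-\sigma}D)/[X^l]D$ uniformly in $l\geq 1$, factoring it into (i)~an arithmetic piece bounded by $1$ via the elementary identity $A^{l-k}(A^k-B^k)/(A^l-B^l)\leq 1$ with $A=(s-\sigma)^{-\alpha}$ and $B=s^{-\alpha}$, and (ii)~a combinatorial piece of the form $c^{-1}\sum_{k=1}^l (l+1)^2/\bigl[(l-k+2)^2(k+1)^2\bigr]$, bounded by $1$ via partial fractions and $\sum_{n\geq 1}n^{-2}=\pi^2/6$. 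This is precisely where the normalization constant $c=4\pi^2/3$ and the factor $(|k|+1)^2$ appearing in the definition~\eqref{defnorm} of the Gevrey norm play their decisive roles.
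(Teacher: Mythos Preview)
Your argument is correct and follows the same majorant-series strategy as the paper: reduce to $(\star)$ via Lemma~\ref{comp}, write $\bar M_{s-\sigma}=(s-\sigma)^{-\alpha}X\tilde M_{s-\sigma}$, expand $(1+\sigma^\alpha(s-\sigma)^{-\alpha}\tilde M_{s-\sigma})^l$ by the binomial theorem, use $\tilde M_{s-\sigma}^j\ll\tilde M_{s-\sigma}$ from Lemma~\ref{lemmecomp}, and invoke $(s-\sigma)^\alpha+\sigma^\alpha\leq s^\alpha$. The only divergence is at the final step. The paper bounds \emph{all} binomial terms (including $j=0$) by $\tilde M_{s-\sigma}(1+\sigma^\alpha(s-\sigma)^{-\alpha})^l$, which collapses the whole sum to $\tilde M_{s-\sigma}\bar M_{s-\sigma}$ and then cites the second half of Lemma~\ref{lemmecomp} directly. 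You keep the $j=0$ term separate, which leads to the sharper reduction $\tilde M_{s-\sigma}D\ll D$ with $D=\bar M_{s-\sigma}-\bar M_s$; this is a genuinely new coefficient inequality that you then prove by hand. Your proof of it is fine (the arithmetic piece $A^{l-k}(A^k-B^k)\leq A^l-B^l$ is immediate from $B^{l-k}\leq A^{l-k}$, and your combinatorial sum is indeed $\leq c$), though note that a third factor $\bigl((l-k+1)!\,k!/l!\bigr)^{\alpha-1}\leq 1$ is silently used when you pass from the full ratio to pieces (i) and (ii); this holds because $\binom{l}{k}\geq l-k+1$ for $1\leq k\leq l$. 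The paper's route is shorter since it recycles Lemma~\ref{lemmecomp} rather than proving a variant, while your route is slightly more careful in handling the $j=0$ term.
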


When $f$ et $g$ are analytic, the analogue of
estimate~\eqref{eq:composition} with the supremum norm is obvious, for
the obvious reason that the supremum of a function is a non-increasing
function of the domain.

As it will be clear in the proof, the conclusions of
Proposition~\ref{composition} holds true under the slightly weaker
assumption that
\[ |u|_{\alpha,s-\sigma}-\sup_{a \in K}|u(a)| \leq s^\alpha-(s-\sigma)^\alpha \]
but this will not be needed.

\begin{proof}
Let 
\[ a:=|f|_{\alpha,s}, \quad b:=|u|_{\alpha,s-\sigma} \]
so that, from Proposition~\ref{propnorme}, 
\[ f(x) \ll_K aM_s(X), \quad u(x) \ll_L bM_{s-\sigma}(X) \]
and consequently
\[ f(x) \ll_K aM_s(X), \quad g(x) \ll_L X+bM_{s-\sigma}(X). \]
We now apply Lemma~\ref{comp} and, recalling the definition of $\bar{M}_s$ and $\tilde{M}_s$ given respectively in~\eqref{defbarM} and~\eqref{deftildeM}, we obtain
\begin{eqnarray}
f(g(x)) & \ll_L & aM_s\left(X+b\bar{M}_{s-\sigma}(X)\right) \nonumber \\ \nonumber
& = & aM_s(0)+a\bar{M}_s\left(X+b\bar{M}_{s-\sigma}(X)\right) \\ \nonumber
& = & aM_s(0)+a\bar{M}_s\left(X+b(s-\sigma)^{-\alpha}X\tilde{M}_{s-\sigma}(X)\right) \\ \nonumber
& = & aM_s(0)+a\sum_{l=1}^{+\infty}\frac{l!^{\alpha-1}}{(l+1)^2} \left(\frac{X+b(s-\sigma)^{-\alpha}X\tilde{M}_{s-\sigma}(X)}{s^\alpha}\right)^l \\ \label{estder}
& = & aM_s(0)+a\sum_{l=1}^{+\infty}\frac{l!^{\alpha-1}}{(l+1)^2} \left(\frac{X}{s^\alpha}\right)^l\left(1+b(s-\sigma)^{-\alpha}\tilde{M}_{s-\sigma}(X)\right)^l.
\end{eqnarray}
From the first part of Lemma~\ref{lemmecomp}, for any $j \in \N$, we have
\[ \tilde{M}_{s-\sigma}^j \ll \tilde{M}_{s-\sigma} \]
and therefore
\begin{eqnarray*}
\left(1+b(s-\sigma)^{-\alpha}\tilde{M}_{s-\sigma}(X)\right)^l
& = & \sum_{j=0}^l \binom{l}{j}b^j(s-\sigma)^{-j\alpha}\tilde{M}_{s-\sigma}(X)^j \\
& \ll & \tilde{M}_{s-\sigma}(X)\sum_{j=0}^l \binom{l}{j}b^j(s-\sigma)^{-j\alpha} \\
& = & \tilde{M}_{s-\sigma}(X)\left(1+b(s-\sigma)^{-\alpha}\right)^l.
\end{eqnarray*}
Now, from~\eqref{small} we get
\[ b=|u|_{\alpha,s-\sigma} \leq \sigma^\alpha \leq s^\alpha-(s-\sigma)^\alpha\]
and thus
\[ 1+ b(s-\sigma)^{-\alpha} \leq s^\alpha(s-\sigma)^{-\alpha}.  \]
This gives
\[ \left(1+b(s-\sigma)^{-\alpha}\tilde{M}_{s-\sigma}(X)\right)^l \ll \tilde{M}_{s-\sigma}(X)(s^\alpha(s-\sigma)^{-\alpha})^l  \]
which, together with~\eqref{estder}, yields
\[ f(g(x))  \ll_L aM_s(0)+a\tilde{M}_{s-\sigma}(X)\sum_{l=1}^{+\infty}\frac{l!^{\alpha-1}}{(l+1)^2} \left(\frac{X}{(s-\sigma)^\alpha}\right)^l=aM_s(0)+a\tilde{M}_{s-\sigma}(X)\bar{M}_{s-\sigma}(X).\]
Using the second part of Lemma~\ref{lemmecomp}, this gives
\[ f(g(x))  \ll_L aM_s(0)+a\bar{M}_{s-\sigma}(X)\]
and since $M_s(0)=M_{s-\sigma}(0)$, we arrive at
\[ f(g(x))  \ll_L a(M_{s-\sigma}(0)+\bar{M}_{s-\sigma}(X))=aM_{s-\sigma}(X).\]
Using Proposition~\ref{propnorme}, we eventually obtain
\[ |f \circ g|_{\alpha,s-\sigma} \leq a= |f|_{\alpha,s} \]
and this concludes the proof.
\end{proof}

\subsection{Flows}

In this section and the next one, we shall state and prove some
estimates adapted to the situation considered in~\S\ref{sec:proof}:
that is we consider functions $H=H(\theta,I,\omega)$ which are defined
and Gevrey smooth on a domain of the form
\[ \T^n \times D_{r,h}=\T^n \times D_r \times D_h \subseteq \T^n \times \R^n \times \R^n \]
where $D_r$ is the ball of radius $r>0$ centered at the origin and $D_h$ is an arbitrary ball of radius $h>0$. In the lemma and proposition below, the variables $\omega \in D_h$ play the role of a fixed parameter, hence to simplify the notations we will explicitly suppress the dependence on $\omega \in D_h$. 

Moreover, throughout this section and the next one, for simplicity we shall write $u \MP v$ (respectively $u \PM v$), if, for some constant $C\geq 1$ which depends only on $n$ and $\alpha$ and could be made explicit, we have $u\leq Cv$ (respectively $Cu \leq v$).

Let us first start with a vector-valued function $D: \T^n \rightarrow \R^n$ which depends only on $\theta \in \T^n$, and that we shall considered as a vector field on $\T^n$.

\begin{lemma}\label{flot}
Given $D \in G_{\alpha,s}(\T^n,\R^n)$, let $0 < \sigma < s$ and assume that
\begin{equation}\label{small2}
|D|_{\alpha,s} \PM \sigma^\alpha.
\end{equation}
Then for any $t \in [0,1]$, the time-$t$ map $D^t$ of the flow of $D$ belongs to $G_{\alpha,s-\sigma}(\T^n,\T^n)$ and we have the estimate
\begin{equation}\label{estflot}
|D^t-\mathrm{Id}|_{\alpha,s-\sigma} \leq |D|_{\alpha,s}.
\end{equation} 
\end{lemma}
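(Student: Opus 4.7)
The plan is to combine the integral form of the flow with Proposition~\ref{composition}, the crucial point being that the latter loses \emph{no} Gevrey width on the left factor whenever the right factor is close enough to the identity. Classical ODE theory gives, for all $t \in \R$, the existence of a smooth diffeomorphism $D^t : \T^n \to \T^n$ depending smoothly on $(t,\theta)$ and satisfying the integral equation
\[ D^t = \mathrm{Id} + \int_0^t D \circ D^u \, du, \]
where the difference with the identity is well-defined as a map $\T^n \to \R^n$ via any lift.

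I would establish the Gevrey bound using the Picard iteration $D^t_0 := \mathrm{Id}$ and $D^t_{n+1} := \mathrm{Id} + \int_0^t D \circ D^u_n \, du$, which converges in $C^\infty(\T^n, \T^n)$ to $D^t$ by classical results. I claim by induction on $n$ that $|D^t_n - \mathrm{Id}|_{\alpha, s-\sigma} \leq t\, |D|_{\alpha,s}$ for all $t \in [0,1]$. Granting the induction hypothesis and choosing the implicit constant in~\eqref{small2} large enough so that $|D|_{\alpha,s} \leq \sigma^\alpha$, the smallness condition of Proposition~\ref{composition} is fulfilled with $f = D$ and $g = D^u_n$, which yields $|D \circ D^u_n|_{\alpha, s-\sigma} \leq |D|_{\alpha, s}$. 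Then, for each multi-index $k$ and each $\theta \in \T^n$, the termwise bound
\[ |\partial^k (D^t_{n+1} - \mathrm{Id})(\theta)| \leq \int_0^t |\partial^k (D \circ D^u_n)(\theta)| \, du \]
lets me, after multiplying by $c(|k|+1)^2(s-\sigma)^{\alpha|k|}/|k|!^\alpha$ and taking suprema in $(k,\theta)$, conclude $|D^t_{n+1} - \mathrm{Id}|_{\alpha, s-\sigma} \leq t\, |D|_{\alpha,s}$, closing the induction. Passing to the limit $n \to \infty$ via lower semi-continuity of the Gevrey norm under $C^\infty$-convergence then gives $|D^t - \mathrm{Id}|_{\alpha, s-\sigma} \leq t\, |D|_{\alpha,s} \leq |D|_{\alpha,s}$ for $t \in [0,1]$, which is~\eqref{estflot}.

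The main technical ingredient is essentially packaged into Proposition~\ref{composition}: the absence of width loss on the left factor makes the bootstrap close cleanly, since each Picard iterate lives at the same Gevrey width $s - \sigma$ rather than at a slightly smaller one. Without such a sharp composition estimate, each iteration would erode a small amount of width and one could not obtain a uniform Gevrey bound on the iterates. The only genuinely additional point is the lower semi-continuity of $|\cdot|_{\alpha, s-\sigma}$ under $C^\infty$-convergence, which is automatic from its expression as a supremum of positive seminorms each continuous under convergence of all derivatives.
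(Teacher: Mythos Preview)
Your proof is correct and rests on the same Picard-iteration idea as the paper, but the passage to the limit is handled differently. The paper works directly in the Banach space $C([0,1],G_{\alpha,s-\sigma}(\T^n,\T^n))$, defines the Picard operator $P(\Phi)^t=\mathrm{Id}+\int_0^t D\circ\Phi^u\,du$, and shows not only that $P$ maps the ball of radius $|D|_{\alpha,s}$ into itself (your self-map step) but also that $P$ is a \emph{contraction} there; the contraction estimate costs one application of Corollary~\ref{corderivative} to control $\nabla D$ and a second use of Proposition~\ref{composition}, and then Banach's fixed-point theorem delivers the flow together with~\eqref{estflot} in one stroke. You instead stop at the uniform bound on the iterates, take the existence and $C^\infty$-convergence of the flow from classical ODE theory, and close via lower semi-continuity of the Gevrey norm. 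Your route is slightly more elementary in that it avoids the contraction step and the derivative estimate, at the price of importing a soft convergence argument from outside; the paper's route is more self-contained. One minor caveat: the $C^\infty$-convergence of the Picard iterates to the flow is not always stated as such in textbooks, but here it follows cheaply from the uniform Gevrey bound you have already established (which controls every $C^k$ norm) together with $C^0$-convergence and Arzel\`a--Ascoli.
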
 

The proof of the above lemma is a variant of the proof of Lemma B.3 in~\cite{LMS16}.

\begin{proof}
The fact that $D^t$ is smooth and defined for all $t \in [0,1]$ (in fact, for all $t \in \R$) follows from the compactness of $\T^n$ and the classical result on the existence and uniqueness of solutions of differential equations (even though this will essentially be re-proved below); the only thing we need to prove is the estimate~\eqref{estflot}. So let us consider the space $V:=C([0,1],G_{\alpha,s-\sigma}(\T^n,\T^n))$ of continuous map from $[0,1]$ to $G_{\alpha,s-\sigma}(\T^n,\T^n)$: given an element $\Phi \in V$ and $t \in [0,1]$, we shall write $\Phi^t:=\Phi(t)$ and consequently $\Phi=(\Phi^t)_{t \in [0,1]}$. We equip $V$ with the following norm:
\[ ||\Phi||:=\max_{t \in [0,1]}|\Phi^t|_{\alpha,s-\sigma} \]
which makes it a Banach space, and if we set $\rho:=|D|_{\alpha,s}$, we define
\[ B_\rho V:=\{ \Phi \in V \; | \; ||\Phi-\mathrm{Id}|| \leq \rho \}.  \]
We can eventually define a Picard operator $P$ associated to $D$ by
\[ P : B_\rho V \rightarrow B_\rho V, \quad \Phi \mapsto P(\Phi) \]
where $P(\Phi)=(P(\Phi)^t)_{t \in [0,1]}$ is defined by
\[ P(\Phi)^t:=\mathrm{Id}+\int_0^t D \circ \Phi^\tau d\tau. \]
To prove the lemma, it is sufficient to prove that $P$ has a unique fixed point $\Phi_* \in B_\rho V$, as necessarily $(\Phi_*^t)_{t \in [0,1]}=(D^t)_{t \in [0,1]}$. Therefore it is sufficient to prove that $P$ induces a well-defined contraction on $B_\rho V$, as the latter is a complete subset of the Banach space $V$.

First we need to show that $P$ maps $B_\rho V$ into itself. So assume $\Phi \in B_\rho V$, using~\eqref{small2} this implies that for all $t \in [0,1]$, 
\[ |\Phi^t-\mathrm{Id}|_{\alpha,s-\sigma} \leq \rho \leq \sigma^\alpha \]
so that~\eqref{small} of Proposition~\ref{composition} is satisfied (with $f=D$ and $g=\Phi^t$ for any $t \in [0,1]$) and the latter proposition applies: this gives
\[ |D \circ \Phi^t|_{\alpha,s-\sigma} \leq |D|_{\alpha,s}=\rho, \quad t \in [0,1] \]
hence
\[ \left|P(\Phi)^t-\mathrm{Id}\right|_{\alpha,s-\sigma}=\left|\int_0^t D \circ \Phi^\tau d\tau\right|_{\alpha,s-\sigma} \leq t\rho \leq \rho, \quad t \in [0,1] \]
and therefore
\[ ||P(\Phi)-\mathrm{Id}|| \leq \rho. \]
This proves that $P$ maps $B_\rho V$ into itself.

It remains to show that $P$ is a contraction. So let $\Phi_1,\Phi_2 \in B_\rho V$, then for any $t \in [0,1]$,
\begin{eqnarray*}
 P(\Phi_1)^t-P(\Phi_2)^t & = & \int_0^t \left(D \circ \Phi_1^\tau-D \circ \Phi_2^\tau \right)d\tau \\
& = & \int_0^t \left(\int_0^1 \nabla D \circ (s\Phi_1^\tau+(1-s)\Phi_2^\tau)ds \right) \cdot (\Phi_1^\tau-\Phi_2^\tau) d\tau. 
 \end{eqnarray*}
Using Proposition~\ref{composition}, Corollary~\ref{corderivative} and Corollary~\ref{corproduit}, we obtain, for any $t \in [0,1]$,
\[ |P(\Phi_1)^t-P(\Phi_2)^t|_{\alpha,s-\sigma} \MP \sigma^{-\alpha}|D|_{\alpha,s}\max_{0 \leq \tau \leq t}|\Phi_1^\tau-\Phi_2^\tau|_{\alpha,s-\sigma} \MP \sigma^{-\alpha}|D|_{\alpha,s}||\Phi_1-\Phi_2||   \]
and hence
\[ ||P(\Phi_1)-P(\Phi_2)|| \MP \sigma^{-\alpha}|D|_{\alpha,s}||\Phi_1-\Phi_2||.  \]
Using~\eqref{small2}, we can then insure that $P$ is a contradiction, which concludes the proof.  
\end{proof}

Now let us consider a Hamiltonian function $X$ on $\T^n \times D_r$, of the form
\begin{equation}\label{HamX}
X(\theta,I):=C(\theta)+D(\theta) \cdot I, \quad  C:\T^n \rightarrow \R, \quad D : \T^n \rightarrow \R^n.
\end{equation}
The Hamiltonian equations associated to $X$ are given by:
\[ 
\begin{cases}
\dot{\theta}(t)=\nabla_I X(\theta(t),I(t))=D(\theta(t)), \\
\dot{I}(t)=-\nabla_\theta X(\theta(t),I(t))=-\nabla C(\theta(t))-\nabla D(\theta(t))\cdot I.
\end{cases}
\]
The equations for $\theta$ are uncoupled from the equations of $I$ (and hence can be integrated independently), while the equations for $I$ are affine in $I$; it is well-known that these facts lead to a simple form of the Hamiltonian flow associated to $X$ (see, for instance,~\cite{Vil08}).

\begin{proposition}\label{flotX}
Let $X$ be as in~\eqref{HamX} with $C \in G_{\alpha,s}(\T^n,\R)$ and $D \in G_{\alpha,s}(\T^n,\R^n)$. Let $0 < \sigma < s$ and assume that
\begin{equation}\label{smallX}
|D|_{\alpha,s} \PM \sigma^\alpha.
\end{equation}
Then for any $t \in [0,1]$, the time-$t$ map $X^t$ of the Hamiltonian flow of $X$ is of the form
\[ X^t(\theta,I)=(\theta+E^t(\theta),I+F^t(\theta)\cdot I+G^t(\theta)) \]
where $E^t \in G_{\alpha,s-\sigma}(\T^n,\R^n)$, $F^t \in G_{\alpha,s-\sigma}(\T^n,\R^{n^2})$ and $G^t \in G_{\alpha,s-\sigma}(\T^n,\R^n)$ with the estimates
\begin{equation}\label{estflotX}
|E^t|_{\alpha,s-\sigma} \leq |D|_{\alpha,s}, \quad |F^t|_{\alpha,s-\sigma} \MP \sigma^{-\alpha}|D|_{\alpha,s}, \quad |G^t|_{\alpha,s-\sigma} \MP \sigma^{-\alpha}|C|_{\alpha,s}. 
\end{equation} 
As a consequence, given $0<\delta<r$, if we further assume that 
\begin{equation}\label{smallXX}
r\sigma^{-\alpha}|D|_{\alpha,s}+\sigma^{-\alpha}|C|_{\alpha,s} \PM \delta 
\end{equation}
then $X^t$ maps $\T^n \times D_{r-\delta}$ into $\T^n \times D_{r}$. 
\end{proposition}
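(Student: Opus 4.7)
The plan is to exploit the semi-direct structure of the Hamiltonian system~\eqref{HamX}. Writing out Hamilton's equations, one finds $\dot\theta = D(\theta)$, which is autonomous, together with $\dot I = -({}^t\nabla D(\theta))\cdot I - \nabla C(\theta)$, which is \emph{affine} in $I$ with $\theta$-dependent coefficients. The form of $X^t$ advertised in the statement is therefore forced by linearity: substituting the ansatz $X^t(\theta,I) = (\theta + E^t(\theta),\ I + F^t(\theta)\cdot I + G^t(\theta))$ and matching powers of $I$ yields the decoupled system
\[
\dot E^t(\theta) = D(\theta + E^t(\theta)), \qquad E^0 = 0,
\]
\[
\dot F^t = -\bigl({}^t\nabla D(\theta+E^t)\bigr)\cdot(I_n + F^t), \qquad \dot G^t = -\bigl({}^t\nabla D(\theta+E^t)\bigr)\cdot G^t - \nabla C(\theta+E^t),
\]
with $F^0 = 0$, $G^0 = 0$.

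First I would apply Lemma~\ref{flot}, with loss $\sigma/2$ in place of $\sigma$, to obtain $E^t \in G_{\alpha,s-\sigma/2}(\T^n,\R^n)$ with $|E^t|_{\alpha,s-\sigma/2}\leq |D|_{\alpha,s}$; the smallness hypothesis~\eqref{small2} of the lemma follows from~\eqref{smallX} by absorbing a factor of $2^{-\alpha}$ into the implicit constant. Next, using Corollary~\ref{corderivative} with loss $\sigma/4$ I obtain $\nabla D \in G_{\alpha,s-\sigma/4}$ with $|\nabla D|_{\alpha,s-\sigma/4}\MP \sigma^{-\alpha}|D|_{\alpha,s}$, and similarly for $\nabla C$. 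Then Proposition~\ref{composition} applied to $\nabla D$ and $\nabla C$ composed with $\mathrm{Id} + E^t$ (noting that $|E^t|_{\alpha,s-\sigma}\leq |D|_{\alpha,s}\PM \sigma^\alpha$ satisfies~\eqref{small} for the relevant split of widths) yields
\[
|\nabla D\circ(\mathrm{Id}+E^t)|_{\alpha,s-\sigma}\MP \sigma^{-\alpha}|D|_{\alpha,s}, \qquad |\nabla C\circ(\mathrm{Id}+E^t)|_{\alpha,s-\sigma}\MP \sigma^{-\alpha}|C|_{\alpha,s}.
\]

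The core step is then a Picard fixed-point argument for $(F^t, G^t)$, in direct analogy with the proof of Lemma~\ref{flot}: working in the Banach space $V=C([0,1],G_{\alpha,s-\sigma}(\T^n,\R^{n^2}\times\R^n))$, I would define the operator sending $(F,G)$ to the right-hand side of the integrated equations and, using Corollary~\ref{corproduit} for the products with $I_n+F^\tau$ and $G^\tau$, show that it preserves a suitable ball of radius $\MP\sigma^{-\alpha}\max(|D|_{\alpha,s},|C|_{\alpha,s})$ and is a contraction there, exactly as in Lemma~\ref{flot}; the contraction constant is of order $\sigma^{-\alpha}|D|_{\alpha,s}$, which is $\ll 1$ by~\eqref{smallX}. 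The unique fixed point delivers $F^t, G^t\in G_{\alpha,s-\sigma}$ with the estimates~\eqref{estflotX}.

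Finally, the mapping property follows from~\eqref{estflotX}: for $(\theta,I)\in \T^n\times D_{r-\delta}$ the $I$-component of $X^t(\theta,I)-(\theta,I)$ is bounded in sup norm by $|F^t|_{\alpha,s-\sigma}(r-\delta)+|G^t|_{\alpha,s-\sigma}\MP r\sigma^{-\alpha}|D|_{\alpha,s}+\sigma^{-\alpha}|C|_{\alpha,s}$, which is $\leq \delta$ by~\eqref{smallXX}. The main obstacle is the bookkeeping of Gevrey widths across the composition $\nabla D\circ(\mathrm{Id}+E^t)$: one has to split the budget $\sigma$ into independent pieces — one for the Cauchy estimate on $\nabla D, \nabla C$, one for the composition loss, and one for the flow construction — and verify at each stage that the smallness condition required by the relevant proposition of Appendix~\ref{app2} is met, which is where the precise form of the hypothesis~\eqref{smallX} is essential.
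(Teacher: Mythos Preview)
Your approach is correct and follows the same overall strategy as the paper: decouple the $\theta$-equation (handled by Lemma~\ref{flot}), then exploit that the $I$-equation is affine in $I$ with $\theta$-dependent coefficients, and finally budget the width loss $\sigma$ between a Cauchy estimate on $\nabla C,\nabla D$ and a composition with $\mathrm{Id}+E^t$.

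The one genuine difference is in how you control $F^t$ and $G^t$. You propose a Picard fixed-point argument on $C([0,1],G_{\alpha,s-\sigma})$, mirroring the proof of Lemma~\ref{flot}. The paper instead writes $F^t$ and $G^t$ directly as integrals involving $\nabla D\circ(\mathrm{Id}+E^\tau)$ and $\nabla C\circ(\mathrm{Id}+E^\tau)$, takes the $|\cdot|_{\alpha,s-\sigma}$ norm, and applies Gronwall's inequality. Since the equations for $F^t$ and $G^t$ are \emph{linear} once $E^t$ is known, Gronwall is the more economical route: existence is already given by classical ODE theory, so one only needs the a priori estimate, and Gronwall delivers it in one line without setting up a contraction. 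Your Picard argument also works (the contraction constant is $\MP\sigma^{-\alpha}|D|_{\alpha,s}\PM 1$ by~\eqref{smallX}), but it duplicates effort. A minor point: since the $F$- and $G$-equations are decoupled, it is cleaner to estimate them separately rather than on a joint ball of radius $\sigma^{-\alpha}\max(|D|_{\alpha,s},|C|_{\alpha,s})$; this is what yields the two distinct bounds in~\eqref{estflotX}.
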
 

\begin{proof}
The second part of the statement clearly follows from the first part, so let us prove the latter. From the specific form of the Hamiltonian equations associated to $X$, one has, for any $t \in [0,1]$,
\[ X^t(\theta,I)=(\theta+E^t(\theta),I+F^t(\theta)\cdot I+G^t(\theta)) \]
with
\[  
\begin{cases}
E^t(\theta)=\int_{0}^t D(\theta+E^\tau(\theta))d\tau, \\
F^t(\theta)=-\int_{0}^t \nabla D(\theta+E^\tau(\theta))d\tau-\int_{0}^t \nabla D(\theta+E^\tau(\theta))\cdot F^\tau(\theta)d\tau \\
G^t(\theta)=-\int_{0}^t \nabla C(\theta+E^\tau(\theta))d\tau-\int_{0}^t \nabla D(\theta+E^\tau(\theta))\cdot G^\tau(\theta)d\tau. 
\end{cases}
\]
Because of~\eqref{smallX}, Lemma~\ref{flot} applies and the flow $D^t(\theta)=\theta+E^t(\theta)$ satisfies~\eqref{estflot}, and therefore 
\[|E^t|_{\alpha,s-\sigma}=|D^t-\mathrm{Id}|_{\alpha,s-\sigma} \leq |D|_{\alpha,s}\]
which gives the first estimate of~\eqref{flotX}. Using this estimate and~\eqref{smallX}, we can apply Proposition~\ref{composition} and Corollary~\ref{corderivative} (both with $\sigma/2$ instead of $\sigma$) to obtain, for any $0 \leq \tau \leq  t \leq 1$,
\[ |\nabla D \circ D^\tau|_{\alpha,s-\sigma} \leq |\nabla D|_{\alpha,s-\sigma/2} \MP \sigma^{-\alpha}|D|_{\alpha,s}.   \]
Looking at the expression of $F^t$, this gives
\[ |F^t|_{\alpha,s-\sigma} \MP \sigma^{-\alpha}|D|_{\alpha,s}\left(1+\int_0^t |F^\tau|_{\alpha,s-\sigma} d\tau \right) \]
which, by Gronwall's inequality and~\eqref{smallX}, implies that for all $t \in [0,1]$,
\[ |F^t|_{\alpha,s-\sigma} \MP \sigma^{-\alpha}|D|_{\alpha,s} \]
which is the second estimate of~\eqref{flotX}. For the third estimate of~\eqref{flotX}, observe that the same argument yields
\[ |G^t|_{\alpha,s-\sigma} \MP \sigma^{-\alpha}|C|_{\alpha,s}+\sigma^{-\alpha}|D|_{\alpha,s}\int_0^t |G^\tau|_{\alpha,s-\sigma} d\tau  \]
and again, by Gronwall's inequality and~\eqref{smallX}, for all $t \in [0,1]$ we have
\[ |G^t|_{\alpha,s-\sigma} \MP \sigma^{-\alpha}|C|_{\alpha,s}. \]
This concludes the proof.
\end{proof}

\subsection{Inverse functions}

In this last section, we shall prove that if a Gevrey map is
sufficiently close to the identity, then its local inverse is still
Gevrey. To prove this in a setting adapted to~\S\ref{sec:proof}, let
us consider a map $\phi$ which depends only on $\omega \in D_h$, that
is $\phi : D_h \rightarrow \R^n$.

\begin{proposition}\label{propomega}
  Given $\phi \in G_{\alpha,s}(D_h,\R^n)$, let $0 < \sigma < s$ and
  assume that
  \begin{equation}\label{smallomega}
    |\phi-\mathrm{Id}|_{\alpha,s} \PM \sigma^{\alpha}, \quad
    |\phi-\mathrm{Id}|_{\alpha,s} \leq h/2  
  \end{equation}
  Then there exists a unique
  $\varphi \in G_{\alpha,s-\sigma}(D_{h/2},D_h)$ such that
  $\phi \circ \varphi=\mathrm{Id}$ and
  \begin{equation}\label{estomega}
    |\varphi-\mathrm{Id}|_{\alpha,s-\sigma} \leq |\phi-\mathrm{Id}|_{\alpha,s}. 
  \end{equation}
\end{proposition}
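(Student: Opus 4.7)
The strategy is a standard fixed-point argument, in the spirit of the proof of Lemma~\ref{flot}. Write $\phi = \mathrm{Id} + u$ with $u \in G_{\alpha,s}(D_h,\R^n)$, and seek $\varphi$ in the form $\varphi = \mathrm{Id} + v$ with $v \in G_{\alpha,s-\sigma}(D_{h/2},\R^n)$. The equation $\phi \circ \varphi = \mathrm{Id}$ then becomes
\[ v = -u \circ (\mathrm{Id}+v), \]
so I will look for $v$ as a fixed point of the Picard-type operator
\[ T : v \longmapsto -u \circ (\mathrm{Id}+v). \]

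First I would set $\rho := |u|_{\alpha,s}$ and work on the closed ball
\[ B_\rho := \bigl\{ v \in G_{\alpha,s-\sigma}(D_{h/2},\R^n) \;\big|\; |v|_{\alpha,s-\sigma} \leq \rho \bigr\}, \]
which is a complete subset of a Banach space. To see that $T$ maps $B_\rho$ into itself, note that if $v \in B_\rho$, then on the one hand $|v|_{\alpha,s-\sigma} \leq \rho \leq h/2$ by the second part of~\eqref{smallomega}, so $\mathrm{Id}+v$ sends $D_{h/2}$ into $D_h$; on the other hand $|v|_{\alpha,s-\sigma} \leq \rho \PM \sigma^\alpha$ by the first part of~\eqref{smallomega}, so Proposition~\ref{composition} applies (with $f = u$ and $g = \mathrm{Id}+v$) and gives
\[ |T(v)|_{\alpha,s-\sigma} = |u \circ (\mathrm{Id}+v)|_{\alpha,s-\sigma} \leq |u|_{\alpha,s} = \rho. \]

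Next I would show $T$ is a contraction on $B_\rho$. Given $v_1, v_2 \in B_\rho$, the mean value theorem yields
\[ T(v_1)-T(v_2) = -\int_0^1 \nabla u \circ \bigl(\mathrm{Id}+tv_1+(1-t)v_2\bigr)\,dt \cdot (v_1-v_2). \]
Corollary~\ref{corderivative} gives $|\nabla u|_{\alpha,s-\sigma/2} \MP \sigma^{-\alpha}|u|_{\alpha,s}$; Proposition~\ref{composition}, applied at width $s-\sigma/2$ with loss $\sigma/2$ (the hypothesis $|tv_1+(1-t)v_2|_{\alpha,s-\sigma} \PM (\sigma/2)^{\alpha}$ being secured by~\eqref{smallomega}), then Corollary~\ref{corproduit} combined with integration in $t$, give
\[ |T(v_1)-T(v_2)|_{\alpha,s-\sigma} \MP \sigma^{-\alpha}|u|_{\alpha,s}\,|v_1-v_2|_{\alpha,s-\sigma}. \]
Choosing the implicit constant in~\eqref{smallomega} large enough, this becomes a contraction with ratio $\leq 1/2$, and the Banach fixed-point theorem produces a unique $v \in B_\rho$ with $T(v) = v$. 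Setting $\varphi := \mathrm{Id}+v$ gives $\phi \circ \varphi = \mathrm{Id}$ together with the estimate~\eqref{estomega}.

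For uniqueness within the stated class, any $\varphi' \in G_{\alpha,s-\sigma}(D_{h/2},D_h)$ satisfying $\phi \circ \varphi' = \mathrm{Id}$ gives $v' := \varphi' - \mathrm{Id}$ with $|v'|_{\alpha,s-\sigma} \leq h/2$; the identity $v' = -u \circ (\mathrm{Id}+v')$ together with Proposition~\ref{composition} forces $|v'|_{\alpha,s-\sigma} \leq |u|_{\alpha,s} = \rho$, so $v' \in B_\rho$ and therefore $v' = v$ by uniqueness of the fixed point. The main technical point to watch is keeping track of the width losses in the composition/derivative chain used for the contraction estimate (performing the derivative at width $s-\sigma/2$ and the composition at width $s-\sigma$), but this is exactly the setting where Proposition~\ref{composition} is tailor-made to apply.
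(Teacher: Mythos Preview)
Your proof is correct and follows essentially the same approach as the paper: a Banach fixed-point argument for the Picard operator $\psi \mapsto \mathrm{Id} - (\phi-\mathrm{Id})\circ\psi$, using Proposition~\ref{composition} to show the operator maps the ball to itself and Corollary~\ref{corderivative} together with Proposition~\ref{composition} and Corollary~\ref{corproduit} for the contraction estimate. The only cosmetic difference is that the paper works directly with $\psi$ in a ball around $\mathrm{Id}$ rather than with $v=\psi-\mathrm{Id}$, and is less explicit about the width bookkeeping in the contraction step (your split $s \to s-\sigma/2 \to s-\sigma$ is exactly what is needed there); your added uniqueness paragraph is a small bonus the paper does not spell out.
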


\begin{proof}
  Let us define $V:=G_{\alpha,s-\sigma}(D_{h/2},\R^n)$, which is a
  Banach space with the norm $||\,.\,||=|\,.\,|_{\alpha,s-\sigma}$,
  and for $\rho:=|\phi-\mathrm{Id}|_{\alpha,s}$, we set
\[ B_\rho V:=\{\psi \in V \; | \; ||\psi-\mathrm{Id}|| \leq \rho\}. \]
Let us define the following Picard operator $P$ associated to $\phi$:
\[ P : B_\rho V \rightarrow B_\rho V, \quad \psi \mapsto P(\psi)=\mathrm{Id}-(\phi-\mathrm{Id})\circ\psi. \]
It is clear that $\phi \circ \varphi=\mathrm{Id}$ if and only if $\varphi$ is a fixed point of $P$, and therefore the proposition will be proved once we have shown that $P$ has a unique fixed point in $B_\rho V$, and to do this it is enough to prove that $P$ is a well-defined contraction of $B_\rho V$.

First let us prove that $P$ maps $B_\rho V$ into itself. So let $\psi \in B_\rho V$, and using the second part of~\eqref{smallomega}, observe that since
\[ \sup_{\omega \in D_{h/2}}|\psi(\omega)-\omega|\leq ||\psi-\mathrm{Id}||\leq \rho \leq h/2  \]
then $\psi$ maps $D_{h/2}$ into $D_h$. This, together with the first part of~\eqref{smallomega} allows us to apply Proposition~\ref{composition} to get
\[ ||(\phi-\mathrm{Id})\circ\psi||=|(\phi-\mathrm{Id})\circ\psi|_{\alpha,s-\sigma} \leq |\phi-\mathrm{Id}|_{\alpha,s}=\rho \]
and thus 
\[ ||P(\psi)-\mathrm{Id}||=||(\phi-\mathrm{Id})\circ\psi|| \leq \rho, \]
that is, $P$ maps $B_\rho V$ into itself. To show that $P$ is a contraction, using Corollary~\ref{corderivative}, Corollary~\ref{corproduit} and Proposition~\ref{composition} one gets, for any $\psi_1,\psi_2 \in B_\rho V$:
\[ ||(\phi-\mathrm{Id})\circ\psi_1-(\phi-\mathrm{Id})\circ\psi_2|| \MP \sigma^{-\alpha} |\phi-\mathrm{Id}|_{\alpha,s} ||\psi_1-\psi_2|| \] 
and from the first part of~\eqref{smallomega}, one can make sure that $P$ is a contraction. This ends the proof.  
\end{proof}

\textit{Comment.} After this work was made public on Arxiv, an independent and interesting proof of a special case of Theorem~\ref{KAMvector} appeared in the preprint~\cite{LDG17}.

\bigskip

\textit{Acknowledgements.} Part of this work was done when the
authors were hosted by ETH Z{\"u}rich during a visit to V. Kaloshin. The authors have also benefited
from partial funding from the ANR project Beyond KAM. 

\addcontentsline{toc}{section}{References}
\bibliographystyle{amsalpha}
\bibliography{KGBR5}

\end{document}